\pdfoutput=1
\documentclass[11pt, reqno]{amsart}
\usepackage{amsmath,amsthm,amsfonts,amssymb,mathrsfs,bm,graphicx,stmaryrd}
\usepackage{epstopdf}
\usepackage{mathtools}
\usepackage{dsfont}
\usepackage{multicol}
\usepackage[colorlinks=true,linkcolor=blue,citecolor=blue,breaklinks]{hyperref}
\usepackage{url}

\usepackage{breakurl}
\usepackage{bbm}
\usepackage{subcaption}
\usepackage{enumerate}
\usepackage{longtable}

\newcommand{\RR}{\mathbb{R}}
\newcommand{\CC}{\mathbb{C}}

\newcommand{\DD}{\mathbb{D}}

\newcommand{\NN}{\mathbb{N}}

\newcommand{\ZZ}{\mathbb{Z}}

\newcommand{\giv}{\,|\,}

\usepackage[letterpaper,hmargin=1.0in,vmargin=1.0in]{geometry}
\parskip 	\smallskipamount

\newtheorem{theorem}{Theorem}
\newtheorem{lemma}[theorem]{Lemma}

\newtheorem{definition}[theorem]{Definition}
\newtheorem{corollary}[theorem]{Corollary}
\newtheorem{proposition}[theorem]{Proposition}

\newtheorem{remark}[theorem]{Remark}

\newcommand{\Cov}{\mathrm{Cov}}

\usepackage[backend=biber,style=alphabetic,doi=false,maxalphanames=10,maxnames=50]{biblatex}
\AtBeginBibliography{\small}

\begin{document}
\title[]{Convergence of the subcritical $\gamma$-LQG metric to the critical $2$-LQG metric as $\gamma \to 2$}

\author[]{Konstantinos Kavvadias}
\address{Konstantinos Kavvadias, Courant Institute of Mathematical Sciences, New York University, New York, NY, USA}
\email{kk6501@nyu.edu}
\date{}

\begin{abstract}
We show that the $\gamma$-LQG metric (appropriately re-normalized) for $\gamma \in (0,2)$ converges in law to the critical $2$-LQG metric (appropriately re-normalized) with respect to the local uniform topology of continuous functions on $\CC \times \CC$. Combining with the results in \cite{kavvadias2025continuity}, we obtain that the law of the $\gamma$-LQG metric (appropriately re-normalized) is continuous in $\gamma \in [0,2]$ with respect to the local uniform topology of continuous functions on $\CC \times \CC$, where the $0$-LQG metric corresponds to the Euclidean metric on $\CC$.
\end{abstract}

\maketitle

\tableofcontents

\section{Introduction}
\label{sec:intro}

\subsection{Overview}
\label{subsec:overview}
Liouville quantum gravity (LQG) surfaces were first introduced in the physics literature by Polyakov \cite{polyakov1981quantum,polyakov1989quantum}. More precisely, we fix $\gamma \in (0,2)$, an open and connected set $U \subseteq \CC$, and let $h$ be the Gaussian free field (GFF) on $U$. Then the $\gamma$-\emph{Liouville quantum gravity} (LQG) surface parameterized by $(U,h)$ is formally the two-dimensional Riemannian manifold with metric tensor
\begin{equation}\label{eq:lqg_metric_tensor}
e^{\gamma h}(d x^2 + d y^2),
\end{equation}
where $dx^2 + dy^2$ denotes the Euclidean metric tensor.

Note that the definition in \eqref{eq:lqg_metric_tensor} does not make literal sense since $h$ is not well-defined as a function taking pointwise values but rather as a distribution (generalized function). However, it was shown in \cite{GM21} (see also \cite{DDDF20,DFGPS20,MS20,MS16,MS21}) that we can construct a random metric on $U$ associated with \eqref{eq:lqg_metric_tensor} using a regularization procedure, for all $\gamma \in (0,2)$. Moreover, it has been shown that \eqref{eq:lqg_metric_tensor} can be associated with a random volume form $e^{\gamma h(z)} dz$ where $dz$ denotes the Lebesgue measure via regularization procedures (see \cite{Kah85,DS09,RV13}).

\textbf{Liouville first passage percolation.} As mentioned above, to construct the LQG metric, we use a regularization procedure. In particular, we use a family of functions which approximate $h$. For $s>0$ and $z,w \in \CC$, we let
\begin{equation*}
    p_s(z,w) = \frac{1}{2\pi s} \exp\left(-\frac{|z-w|^2}{2s}\right)
\end{equation*}
be the transition kernel of the two-dimensional standard Brownian motion. For $\varepsilon > 0$, we consider a mollified version of the GFF by 
\begin{equation}\label{eq:gff_mollification}
h_{\varepsilon}^{\star}(z):=(h \star p_{\varepsilon^2 / 2})(z) = \int_U h(w) p_{\varepsilon^2 / 2}(z,w) dw = (h,p_{\varepsilon^2 / 2}(z,\cdot)) \quad \text{for all} \quad z \in U,  
\end{equation}
where the integral is interpreted in the distributional sense. Also, for all $\xi>0$, we define
\begin{equation}\label{eq:lfpp_metric}
D_h^{\varepsilon}(z,w):=\inf_P \int_0^1 e^{\xi h_{\varepsilon}^{\star}(P(t))} |P'(t)| dt \quad \text{for all} \quad z,w \in \CC,
\end{equation}
where the infimum is over all piecewise continuously differentiable paths $P:[0,1] \rightarrow \CC$ from $z$ to $w$. The metrics $D_h^{\varepsilon}$ are called $\varepsilon$-\emph{Liouville first passage percolation} (LFPP).

To extract a non-trivial limit of the metrics $D_h^{\varepsilon}$, we need to re-normalize. We define the re-normalization factor $\alpha_{\varepsilon}$ to be given by the median of
\begin{equation}\label{eq:lfpp_renormalization}
\inf_P \left \{\int_0^1 e^{\xi h_{\varepsilon}^{\star}(P(t))} |P'(t)| dt : P \,\, \text{is a left-right crossing of} \,\, [0,1]^2 \right \}
\end{equation}
in the case that $h$ is a whole-plane GFF normalized such that its average on the unit circle $\partial \DD$ is equal to zero, where a left-right crossing of $[0,1]^2$ is a piecewise continuously differentiable path in $[0,1]^2$ joining the left and right boundaries of $[0,1]^2$.

It was shown in \cite[Proposition~1.1]{ding2023tightness} that for each $\xi>0$, there exists $Q = Q(\xi)>0$ such that
\begin{equation}\label{eq:lfpp_normalizing_factor}
    \alpha_{\varepsilon} = \varepsilon^{1-\xi Q + o_{\varepsilon}(1)}, \quad \text{as} \quad \varepsilon \to 0.
\end{equation}
Moreover, it was shown in \cite[Proposition~1.1]{ding2023tightness} that $Q$ is a non-increasing function of $\xi$.

We define the \emph{critical value} for the parameter $\xi$ by 
\begin{equation}\label{eq:critical_value}
\xi_c:=\inf\{\xi >0 : Q(\xi) = 2\}.
\end{equation}
It also follows from \cite[Proposition~1.1]{ding2023tightness} that $\xi_c$ is the unique value of $\xi$ such that $Q(\xi) = 2$ and from \cite[Theorem~2.3]{gwynne2019bounds} that $\xi_c \in [0.4135 , 0.4189]$. Moreover, we have that $Q>2$ for $\xi < \xi_c$, and $Q \in (0,2)$ for $\xi > \xi_c$.
\begin{definition}\label{def:regimes_of_lfpp}
We refer to LFPP with $\xi < \xi_c, \xi = \xi_c$ and $\xi > \xi_c$ as the \textbf{subcritical},\textbf{critical}, and \textbf{supercritical} phases, respectively.   
\end{definition}

\textbf{Subcritical, critical, and supercritical regimes.} In the subcritical regime, one of the main results in \cite{GM21} in the case that $h$ is a whole-plane GFF is the following.
\begin{theorem}(\cite[Theorem~1.1]{GM21}) \label{thm:uniqueness_of_subcritical_lqg_metrics}
The random metrics $\alpha_{\varepsilon}^{-1} D_h^{\varepsilon}$ converge in probability as $\varepsilon \to 0$ with respect to the local unfirom topology on $\CC \times \CC$ to a random metric in $\CC$ which is almost surely determined by $h$.
\end{theorem}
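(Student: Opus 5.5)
This is the main result of \cite{GM21}, and the proof I would follow is the tightness-plus-uniqueness scheme used there. The argument has three stages.

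\emph{Stage 1: tightness.} First I show that the laws of $\alpha_\varepsilon^{-1} D_h^{\varepsilon}$ are tight in the local uniform topology on $\CC\times\CC$. This is the content of \cite{ding2023tightness} (see also \cite{DDDF20}) in the subcritical phase $\xi<\xi_c$, i.e.\ $Q>2$. The key inputs are:
\begin{itemize}
\item \emph{Crossing estimates.} The left-right crossing distance of $[0,1]^2$, normalized by its median $\alpha_\varepsilon$, has a law that is tight and bounded away from $0$, with stretched-exponential tails on both sides. One proves this by an Efron--Stein or percolation-type bootstrap across scales, using the near-independence of the GFF at separated scales.
\item \emph{Chaining.} A chaining argument over dyadic squares then gives a uniform H\"older-type modulus of continuity with respect to the Euclidean metric. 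Here $Q>2$ guarantees that the metric is continuous and that singular points do not accumulate.
\end{itemize}
From this I extract subsequential limits in law of the pair $(h, \alpha_\varepsilon^{-1}D_h^\varepsilon)$. I also check that every limit is a genuine metric, not merely a pseudometric, by using the lower crossing bounds.

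\emph{Stage 2: properties of subsequential limits.} I verify that every subsequential limit $D$, coupled with $h$, is a \emph{weak LQG metric}. Concretely, $D$ should satisfy:
\begin{itemize}
\item it is a length metric;
\item it is locality, meaning $D$ restricted to a set $U$ is determined by $h|_U$;
\item Weyl scaling, $e^{\xi f}\cdot D_h = D_{h+f}$ for continuous $f$, which is immediate at the level of LFPP;
\item translation invariance and tightness across scales, with scaling constants $\mathfrak c_r$ comparable up to constants to $r^{\xi Q}$;
\item measurability with respect to $h$ in the limit.
\end{itemize}
I expect measurability to be the most delicate of these. I would obtain it by coupling two conditionally independent limits given $h$ and showing that they coincide, which is exactly what Stage 3 produces.

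\emph{Stage 3: uniqueness (the main obstacle).} I then prove that any two weak LQG metrics $D, \widetilde D$ coupled with the same $h$ are a.s.\ equal up to a deterministic constant. The plan has four steps.
\begin{enumerate}
\item Show bi-Lipschitz equivalence: let $c_*$ and $C^*$ be the optimal constants with $c_* D\le \widetilde D\le C^* D$. This follows from a multiscale argument in which, for many independent annuli, geodesics can be compared with positive probability.
\item Suppose $c_*<C^*$ and derive a contradiction. By locality and an independence-across-scales lemma, at a positive density of scales around a $D$-geodesic one finds events on which $\widetilde D\le C' D$ along a segment of the geodesic, with $C'<C^*$.
\item Summing the savings along the geodesic shows that $\widetilde D(z,w)\le (C^*-\delta)D(z,w)$ with high probability.
\item Upgrade this to all pairs $(z,w)$, which contradicts the optimality of $C^*$.
\end{enumerate}
I expect the hardest part to be constructing these ``shortcut'' events so that they are determined locally by $h$ and occur with uniformly positive probability at every scale. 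The construction uses a bump function added to $h$ together with absolute continuity of the GFF.

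\emph{Conclusion.} Once uniqueness holds, all subsequential limits agree and are determined by $h$. The constant is fixed by the median normalization. Convergence in law to a limit that is a deterministic function of $h$, jointly with $h$, upgrades to convergence in probability, which completes the proof.
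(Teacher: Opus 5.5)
The paper does not prove this statement; it is quoted from \cite[Theorem~1.1]{GM21}, and your outline follows that source's route: tightness as in \cite{DDDF20}, subsequential limits satisfying the weak LQG axioms as in \cite{DFGPS20}, uniqueness by showing the optimal bi-Lipschitz constants satisfy $c_*=C^*$, then the median normalization and the determined-by-$h$ upgrade to convergence in probability. Two corrections: in that literature, measurability of subsequential limits is established before uniqueness, via the Gwynne--Miller criterion for a local metric of $h$ to be a.s.\ determined by $h$ (your Stage~3 is phrased for metrics already measurable with respect to $h$, so applying it to conditionally independent copies would require rerunning it in the jointly local setting), and at the weak stage one does not know $\mathfrak{c}_r\asymp r^{\xi Q}$, only polynomial bounds on $\mathfrak{c}_{\delta r}/\mathfrak{c}_r$, with exact $r^{\xi Q}$ scaling being a consequence of uniqueness (your argument never needs it).
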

Moreover, it was shown in \cite{GM21} that the limit in Theorem~\ref{thm:uniqueness_of_subcritical_lqg_metrics} satisfies a certain list of axioms which uniquely characterize the metric (see Section~\ref{subsec:lqg_metrics}). The limiting metric $D_h$ in Theorem~\ref{thm:uniqueness_of_subcritical_lqg_metrics} is the metric associated with \eqref{eq:lqg_metric_tensor} with parameter $\gamma \in (0,2)$, where $\xi$ and $\gamma$ are related by 
\begin{equation}\label{eq:gamma_xi}
Q(\xi) = \frac{2}{\gamma} + \frac{\gamma}{2}.  
\end{equation}

More generally, the limit in probability of $\alpha_{\varepsilon}^{-1} D_h^{\varepsilon}$ as $\varepsilon \to 0$ exists for all $\xi>0$ whenever $h$ is a whole-plane GFF plus a bounded and continuous function. However, the limit is not a continuous metric for $\xi > \xi_c$ but it is lower semicontinuous and takes infinite values. Roughly speaking, the reason for this is the existence of points $z \in \CC$ of ''thickness" greater than $Q \in (0,2)$, i.e., points for which
\begin{equation*}
    \limsup_{\varepsilon \to 0} \frac{h_{\varepsilon}(z)}{\log \varepsilon^{-1}} > Q,
\end{equation*}
where $h_{\varepsilon}(z)$ denotes the average of $h$ over the circle centered at $z$ with radius $\varepsilon$. These points are called \emph{singular} for the limiting metric.
\begin{definition}\label{def:lower_semi_cont}
We say that a sequence of lower semicontinuous functions $(f_n)$ on $\CC \times \CC$ converges to a function $f$ on $\CC \times \CC$ with respect to the lower semicontinuous topology if and only if the following hold.
\begin{enumerate}
    \item For every sequence of points $(z_n,w_n) \in \CC \times \CC$ converging to a point $(z,w) \in \CC \times \CC$, we have that
    \begin{equation*}
        f(z,w) \leq \liminf_{n \to \infty} f_n(z_n,w_n).
    \end{equation*}
    \item For every point $(z,w) \in \CC \times \CC$, there exists a sequence of points $(z_n,w_n) \in \CC \times \CC$ converging to $(z,w)$ for which
    \begin{equation*}
        f(z,w) = \lim_{n \to \infty} f_n(z_n,w_n).
    \end{equation*}
\end{enumerate}  
\end{definition}
It is easy to see that if $f_n \rightarrow f$ as $n \to \infty$ in the sense of Definition~\ref{def:lower_semi_cont}, then $f$ is lower semicontinuous. Moreover, it is easy to verify that the set of lower semicontinuous functions $\CC \times \CC \rightarrow \RR \cup \{\pm \infty\}$ endowed by the convergence in Definition~\ref{def:lower_semi_cont} is metrizable and the resulting metric space is separable and complete (see e.g., \cite[Section~1.2]{ding2023uniqueness}).

The following was shown in \cite{ding2023uniqueness}.
\begin{theorem}(\cite[Theorem~1.3]{ding2023uniqueness})\label{thm:uniqueness_supercritcal}
Let $h$ be a whole-plane GFF plus a bounded continuous function. For each $\xi>0$, the re-scaled LFPP metrics $\alpha_{\varepsilon}^{-1} D_h^{\varepsilon}$ converge in probability as $\varepsilon \to 0$ with respect to the topology of Definition~\ref{def:lower_semi_cont} to a limit $D_h^{\xi}$. The limit $D_h^{\xi}$ is a random metric on $\CC$, except that it is allowed to take on infinite values.
\end{theorem}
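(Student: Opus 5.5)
The statement is the main theorem of \cite{ding2023uniqueness}. Since the present paper only cites it, the natural ``proof'' is to recall the strategy there, which has two parts: tightness, and then uniqueness of subsequential limits.

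\textbf{Tightness.} First I would show that the family $\alpha_{\varepsilon}^{-1} D_h^{\varepsilon}$ is tight in the lower semicontinuous topology of Definition~\ref{def:lower_semi_cont}, for every $\xi>0$. This follows the approach of \cite{ding2023tightness}. The key inputs are quantile comparisons for crossing distances of annuli and rectangles at all scales, uniformly in $\varepsilon$, obtained via a percolation/RSW-type argument. These show that the ratios of quantiles of hard and easy crossings stay bounded. From this one deduces that the subsequential limits are lower semicontinuous metrics, possibly taking the value $\infty$. They are also local and measurable with respect to $h$, are Weyl covariant ($D_{h+f}=e^{\xi f}\cdot D_h$ for bounded continuous $f$), and satisfy a coordinate-change (scaling) relation with the constants $\alpha_{\varepsilon}$ and the exponent $Q=Q(\xi)$. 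By a Prokhorov argument on the separable complete metric space of lower semicontinuous functions, and joint tightness with $h$, I would extract subsequential limits $(h,D_h)$ in law.

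\textbf{Uniqueness.} Next, I would show that any two subsequential limits $D_h$ and $\widetilde D_h$ that are coupled with the same field agree almost surely. Following the subcritical uniqueness argument of \cite{GM21}, one would define optimal constants $c_*\le C_*$ such that $c_* D_h\le \widetilde D_h\le C_* D_h$. Bi-Lipschitz equivalence comes from local independence and the axioms. The core is to show that if $c_*<C_*$, then along a $D_h$-geodesic one finds many ``good'' annuli where $\widetilde D_h\le C' D_h$ with $C'<C_*$, giving a contradiction.

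The main obstacle is the supercritical regime $\xi>\xi_c$. There, geodesics and distances interact with singular points where the metric is infinite, and the limit is not a length metric in the usual continuous sense. One must work with the lower semicontinuous topology, handle ``shortcuts'' around thick points, and obtain the counting estimates for good annuli without continuity. I would try first to establish a confluence/independence estimate for annulus crossings that requires only finiteness of distances between typical points. Once uniqueness holds, measurability with respect to $h$ upgrades convergence in law to convergence in probability by the standard argument: two independent subsequences yield the same $h$-measurable limit.
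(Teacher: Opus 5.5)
The paper contains no proof of this statement. It is quoted from \cite[Theorem~1.3]{ding2023uniqueness}, alongside the axiomatic uniqueness result recorded as Theorem~\ref{thm:uniqueness_lqg_metrics}. Deferring to that source is therefore exactly what the paper does. Your architecture is also the right one: tightness as in \cite{ding2023tightness}, subsequential limits satisfying LQG-metric-type axioms, uniqueness up to a constant, and then an upgrade from convergence in law to convergence in probability. That upgrade works because both the approximants and the limit are determined by $h$, the same mechanism used in the proof of Proposition~\ref{prop:conv_in_prob}.

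Read as an argument in its own right, however, the sketch has concrete gaps.

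\emph{First}, the optimal-constant argument, even if completed, only gives $\widetilde D_h=C_*D_h$ for a deterministic $C_*$. It does not give that two subsequential limits agree. You still need $C_*=1$, i.e.\ that the median normalization defining $\alpha_\varepsilon$ passes to the limit. This is not automatic: left--right crossing distances of $[0,1]^2$ are internal-metric quantities, and these are not continuous functionals in the topology of Definition~\ref{def:lower_semi_cont}.

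\emph{Second}, neither measurability of the limit with respect to $h$ nor exact scale invariance comes out of the RSW/quantile estimates. Measurability needs a separate argument. Without it, neither Theorem~\ref{thm:uniqueness_lqg_metrics} (which concerns measurable maps $h\mapsto D_h$) nor the upgrade to convergence in probability applies. As for scaling, a priori a subsequential limit only satisfies a tightness-across-scales substitute for Axiom~\eqref{it:scale_translation_invariance}. Exact scaling is an output of uniqueness, so the uniqueness argument must be run in that weaker framework.

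\emph{Third, and most seriously}, the steps that are genuinely new for $\xi\geq\xi_c$ are precisely the ones you assert or leave open. These are bi-Lipschitz equivalence of two such metrics, and the exclusion of $c_*<C_*$ without the subcritical H\"older estimates and, for $\xi>\xi_c$, without continuity with respect to the Euclidean metric. You either assert them (``comes from local independence and the axioms'') or leave them as ``I would try'', via a confluence estimate whose availability in this regime you do not justify. So the proposal names the right scaffolding but does not supply the key idea.

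A minor correction: by Axiom~\eqref{it:length_space} the limit \emph{is} a length space. What fails for $\xi>\xi_c$ is continuity with respect to the Euclidean topology, because of singular points at infinite distance from every other point.
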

As in \cite{GM21}, it was shown in \cite{ding2023uniqueness} that the limiting metric in Theorem~\ref{thm:uniqueness_supercritcal} satisfies a list of axioms which uniquely characterize it modulo a multiplicative deterministic constant (see Section~\ref{subsec:lqg_metrics}).

As for the critical regime, it was shown in \cite{ding2024critical} that the critical LQG metric induces the Euclidean topology almost surely and it corresponds to LQG with $\gamma = 2$. However, its modulus of continuity with respect to the Euclidean metric behaves differently from the subcritical LQG metric. In particular, the former has a logarithmic modulus of continuity (see \cite[Theorem~1.7]{ding2024critical}) while the latter is locally H\"older continuous with respect to the Euclidean metric (see \cite[Theorem~1.7]{DFGPS20}). Roughly speaking, the reason for this difference is the following. First, we note that it was shown in \cite{hu2010thick} that it is almost surely the case that there are no points $z \in \CC$ such that
\begin{equation}\label{eq:thick_points}
\limsup_{\varepsilon \to 0} \frac{h_{\varepsilon}(z)}{\log \varepsilon^{-1}} = \alpha
\end{equation}
for some $|\alpha|>2$. It was also shown in \cite{hu2010thick} that is is almost surely the case that for each $\alpha \in [-2,2]$ there exist points $z \in \CC$ for which \eqref{eq:thick_points} holds. Therefore, it follows that $\xi_c$ is the critical threshold for which singular points exist.

\subsection{Main result}
\label{subsec:main_result}

Let us now describe the main result of the paper. For all $\xi>0$, we let $D_h^{\xi}$ denote the limiting metric in Theorem~\ref{thm:uniqueness_supercritcal} whenever $h$ is a whole-plane GFF plus a bounded continuous function. It is then natural to ask whether the family of metrics $(D_h^{\xi_n})_{n \in \NN}$ (appropriately re-normalized) is tight or it converges in law or in probability to some non-trivial limit whenever $(\xi_n)_{n \in \NN}$ is a sequence in $(0,\infty)$ converging to $\xi \in (0,\infty)$ as $n \to \infty$. The paper \cite{kavvadias2025continuity} gave an affirmative answer to the above question in the case that $(\xi_n)_{n \in \NN}$ is a sequence in $(0,\xi_c)$ and $\xi \in [0,\xi_c)$. Here, we interpret the LQG metric with parameter $\xi = 0$ (equivalently, $\gamma=0$) as the Euclidean metric. In fact, it was shown in \cite[Theorems~1.5 and ~1.7]{kavvadias2025continuity} that the law of the $\xi$-LQG metric (appropriately re-normalized) is continuous in $\xi \in [0,\xi_c)$ with respect to the topology of local uniform convergence in $\CC \times \CC$ (see also \cite[Theorems~1.6 and ~1.9]{kavvadias2025continuity} for results regarding tightness for general $\xi$ with respect to the lower semicontinuous topology of functions).

Note that the paper \cite{kavvadias2025continuity} used the H\"older continuity estimates obtained in \cite[Section~3]{DFGPS20} combined with an appropriate re-normalization of the metric to obtain that the aforementioned estimates are uniform in $\gamma \in (0,2)$ as long as $\gamma$ is bounded away from $2$. However, the argument used in \cite{kavvadias2025continuity} does not work when $\gamma \to 2$ since the estimates obtained in \cite[Section~3]{DFGPS20} do not work in the $\gamma = 2$ case (see \cite[Theorem~1.7 and Proposition~1.8]{ding2024critical}). The goal of this paper is to consider the limit in probability as $\gamma \to 2$ of the $\gamma$-LQG metric (appropriately re-normalized) with respect to the local uniform topology of functions on $\CC \times \CC$. In particular, we will prove the following.
\begin{theorem}\label{thm:main_result}
Let $h$ be a whole-plane GFF normalized such that $h_1(0) = 0$. For all $\xi>0$, we set
\begin{equation}\label{eq:renormalization}
\beta(\xi):=\inf\left\{l>0 : \mathbb{P}[D_h^{\xi}(0,1) \leq l] > \frac{1}{2} \right\}  
\end{equation}
 and $\widehat{D}_h^{\xi}:=\beta(\xi)^{-1} D_h^{\xi}$, and note that Lemma~\ref{lem:normalization_well_defined} implies that
 \begin{equation*}
     \mathbb{P}[D_h^{\xi}(0,1) \leq \beta(\xi)] = \frac{1}{2}.
 \end{equation*}
Then, the following is true. Let $(\xi_n)_{n \in \NN}$ be a sequence in $(0,\xi_c)$ such that $\xi_n \to \xi_c$ as $n \to \infty$. Then, we have that
 \begin{equation*}
     \widehat{D}_h^{\xi_n} \to \widehat{D}_h^{\xi_c} \quad \text{in probability}\quad \text{as} \quad n \to \infty
 \end{equation*}
 with respect to the local uniform topology of functions on $\CC \times \CC$.
\end{theorem}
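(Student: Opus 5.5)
The plan is to follow the standard three-step scheme for showing that a family of LQG metrics converges: (i) prove tightness of $\{\widehat{D}_h^{\xi_n}\}_n$ in the local uniform topology on $\CC\times\CC$; (ii) show that every subsequential limit $\widetilde D$, coupled with $h$, satisfies the axioms of a critical ($\xi=\xi_c$) weak LQG metric from Section~\ref{subsec:lqg_metrics}; (iii) invoke the uniqueness theorem for critical LQG metrics \cite{ding2023uniqueness,ding2024critical}. Step (iii) gives $\widetilde D = c\,D_h^{\xi_c}$ almost surely for a deterministic $c>0$. The normalization $\mathbb{P}[\widehat D(0,1)\le 1]=1/2$ then forces $c=\beta(\xi_c)^{-1}$; this requires the law of $D_h^{\xi_c}(0,1)$ to have no atom, which Lemma~\ref{lem:normalization_well_defined} supplies. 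Since the limit is almost surely determined by $h$, convergence in law of the pair $(h,\widehat D_h^{\xi_n})$ upgrades to convergence in probability by the usual argument: two conditionally independent subsequential limits given $h$ must coincide.

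For step (i) I would not use Hölder estimates, which degenerate as $\gamma\to2$. Instead I would prove bounds that are uniform in $\xi\in[\xi_c-\delta,\xi_c)$ and modeled on the critical estimates of \cite{ding2024critical}. First, using the axioms (Weyl scaling with $e^{\xi f}$, coordinate change with exponent $\xi Q(\xi)$) together with the continuity of $Q(\xi)$, I would show that $\beta(\xi)$ is comparable to the median of $D_h^\xi$-crossing distances of the unit square, with constants uniform in $\xi$. Second, I would derive uniform superpolynomial concentration for crossing distances of $r$-squares, normalized by $r^{\xi Q(\xi)}e^{\xi h_r(z)}$. The route is the Efron--Stein/percolation argument of \cite{ding2023tightness}, applied to the limiting metrics through the axioms, since all of its constants depend continuously on $\xi$. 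Third, I would chain these estimates over dyadic scales to obtain a uniform logarithmic-type modulus of continuity of the form
\[
\widehat D_h^{\xi}(z,w)\le C\,\bigl(\log |z-w|^{-1}\bigr)^{-p}
\]
on compacts, with $C$ tight uniformly in $\xi$. Because there are no $\alpha$-thick points with $\alpha>2$ and $Q(\xi)\ge2$, the sum over scales converges uniformly. Arzelà--Ascoli then gives tightness. A matching uniform lower bound on distances rules out degenerate limits, so the limit is a true metric inducing the Euclidean topology.

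For step (ii), locality and the length-space property pass to the limit as in \cite{kavvadias2025continuity}: one uses that $\widehat D_h^{\xi_n}$ restricted to an open set $U$ is determined by $h|_U$, approximates by internal metrics, and uses lower semicontinuity of length. Weyl scaling with exponent $\xi_n\to\xi_c$ and coordinate change with exponent $\xi_nQ(\xi_n)\to 2\xi_c$ pass to the limit by local uniform convergence, once one has continuity of $Q$ at $\xi_c$ and the tightness ratio bounds for the scaling constants $\beta$ at different scales. The main obstacle is the uniform modulus of continuity in step (i) near $\gamma=2$. There one has to control distances near points of thickness close to $2$ with constants independent of $\xi$, and the estimates of \cite[Section~3]{DFGPS20} fail there. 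I would attempt it by redoing the critical argument of \cite[Theorem~1.7]{ding2024critical} with $\xi$ as a parameter, and tracking that every exponent depends only on a lower bound for $Q(\xi)-\xi\alpha$ summed over scales.
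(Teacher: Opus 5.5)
Your overall architecture matches the paper's. The paper proves tightness in the local uniform topology and identifies subsequential limits as $\xi_c$-LQG metrics determined by $h$ (it takes this from \cite[Proposition~7.3 and Theorem~8.19]{kavvadias2025continuity}). It then applies uniqueness and fixes the constant by the median via Lemma~\ref{lem:normalization_well_defined}. For that last step you need more than the absence of atoms in the law of $D_h^{\xi_c}(0,1)$: you also need that it charges every interval, so the median is unique. The lemma gives both.

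The gap is in step (i), which is essentially the whole content of the paper. Your reason for uniform convergence of the sum over scales is that there are no points of thickness $>2$ and $Q(\xi)\ge 2$. That is exactly the naive argument that fails. The cost of scale $e^{-t}$ around $z$ is $\exp(\xi h_{e^{-t}}(z)-\xi Q(\xi)t)$. The available uniform-in-$z$ control is $h_{e^{-t}}(z)\le 2t-\alpha\log t+C$ with $\alpha<1/4$ (Proposition~\ref{prop:circle_average_bound}). This cannot be pushed to a summable exponent: as noted before Lemma~\ref{lem:distance_circle_large}, for $\beta>(1+\theta)/\xi_c$ there a.s. exist $z\in U$ and arbitrarily large $t$ with $h_{e^{-t}}(z)-h_1(z)\ge 2t-\beta\log t$. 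The resulting terms $t^{-\alpha\xi}e^{-\xi(Q(\xi)-2)t}$ have a sum that is finite for each $\xi<\xi_c$. Since $\alpha\xi_c<1$, that sum blows up as $Q(\xi)\downarrow 2$ and diverges at $\xi_c$. Tracking a lower bound on a gap between $Q(\xi)$ and the thickness cannot help either. At thickness $a$ the per-scale exponent is $\xi(a-Q(\xi))t$, and that gap vanishes exactly at the near-maximal scales that matter.

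What is missing is the per-point occupation-time control behind \cite[Theorem~1.7]{ding2024critical}, plus a reduction that makes it uniform in $\xi$. The paper proceeds in four steps.

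(a) For $\xi\in[\xi_0,\xi_c]$, drop $\xi(2-Q(\xi))t\le 0$ and absorb the error term into a small $\delta$. The paper uses $(2t-h_{e^{-t}}(z))/A$ in place of the $|2t-h_{e^{-t}}(z)|^{1/2+\zeta}$ of \cite{ding2024critical}. Each scale then costs at most $C\exp((\xi-\delta)(h_{e^{-t}}(z)-2t))$, with $C$ independent of $\xi$ (Lemma~\ref{lem:stringing_paths_1}).

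(b) Fix $\beta$ with $(\xi_0-\delta)\beta>1+\theta$. Stretches of scales where $h_{e^{-t}}(z)-h_1(z)\le 2t-\beta\log t$ cost $\sum_{k\ge m}k^{-(\xi-\delta)\beta}\lesssim m^{-\theta}$ (Lemma~\ref{lem:distance_circle_small}).

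(c) Take grid points with $h_{e^{-n}}(z)-h_1(z)\ge 2n-\beta\log n$. The Brownian-bridge argument (Lemma~\ref{lem:brownian_bridge}) shows that on $[n/2,n]$ the circle-average process stays below $2t-\alpha\log n$. It also shows the process spends time at most $(\log n)^4$ above $2t-\beta\log n$. This gives cost $n^{-\alpha(\xi-\delta)}(\log n)^4+n^{1-\beta(\xi-\delta)}$ (Lemma~\ref{lem:distance_circle_large}).

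(d) Splice (b) and (c) at the last near-maximal scale using around-paths (Lemmas~\ref{lem:around_poly} and~\ref{lem:distance_poly}). Then chain over scales $e^{-2^j}$ to get the $(\log|z-w|^{-1})^{-\theta}$ modulus (Proposition~\ref{prop:log_modulus}).

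All of this needs as input a per-scale tail bound whose exponent is uniform in $\xi$ and can be made large, e.g.\ $b(\xi)\sqrt{M(\xi)}>40(1+\xi_c)/\delta$. A large exponent is needed to survive union bounds over $e^{2t}$ points. The paper takes this bound from Proposition~\ref{prop:main_lqg_estimate}, which is stated for the tuned normalization $\alpha(\xi)$, not $\beta(\xi)$. It works with $\widetilde D_h^{\xi}$ throughout and passes to $\widehat D_h^{\xi}$ only at the end. Your assertion that the Efron--Stein constants of \cite{ding2023tightness} ``depend continuously on $\xi$'' does not by itself supply this uniform input.
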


Let us now briefly explain why \cite[Theorems~1.5 and ~1.7]{kavvadias2025continuity} are non-trivial to prove. First, we note that it is explained in \cite[Section~1.2]{kavvadias2025continuity} that is is easy to see that the law of the volume measure associated with \eqref{eq:lqg_metric_tensor} is continuous in $\gamma \in (0,2)$ (with respect to the weak convergence of measures) using Kahane's convexity inequality and Kolmogorov's continuity criterion. However, a similar reasoning does not apply for the LFPP metrics $D_h^{\varepsilon}$ since they are not defined as a direct regularization of the field $h$ (as it is the case for the $\gamma$-LQG volume form) due to the presence of the infimum over all paths $P : [0,1] \rightarrow \CC$ which are piecewise differentiable in \eqref{eq:lfpp_metric}. The same difficulty is present for all $\xi>0$. Furthermore, the LQG area measure has been constructed in the critical case $\gamma = 2$ (see \cite{duplantier2014critical,duplantier2014renormalization}), but the construction is more difficult than for $\gamma \in (0,2)$. Also, if we re-normalize the subcritical $\gamma$-LQG area measure by $1 / (2-\gamma)$, then we obtain the critical LQG are measure in the limit as $\gamma \to 2$ (see \cite[Theorem~1.1]{aru2019critical}). In particular, the volume form associated with \eqref{eq:lqg_metric_tensor} becomes degenerate as $\gamma \to 2$. This suggests that it is non-trivial to show that the limit as $\gamma \to 2$ of the $\gamma$-LQG metric is non-trivial.

Finally, we state two consequences of Theorem~\ref{thm:main_result}.
\begin{corollary}\label{cor:main_result}
Suppose that we have the same setup as in Theorem~\ref{thm:main_result}. Let $(f_n)_{n \in \NN},f$ be real-valued bounded continuous functions on $\CC$ such that $f_n \to f$ as $n \to \infty$ locally uniformly and $\sup_{n \in \NN} ||f_n||_{\infty} < \infty$. Then, we have that
\begin{equation*}
    \widehat{D}_{h+f_n}^{\xi_n} \to \widehat{D}_{h+f}^{\xi_c} \quad \text{in probability} \quad \text{as} \quad n \to \infty
\end{equation*}
with respect to the local uniform topology of functions on $\CC \times \CC$.    
\end{corollary}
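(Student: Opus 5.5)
The idea is to deduce the corollary from Theorem~\ref{thm:main_result} via Weyl scaling: for continuous $f$ the LQG metric $D_{h+f}^{\xi}$ is $e^{\xi f}\cdot D_h^{\xi}$. Here, for a continuous $g$ and a length metric $D$,
\begin{equation*}
(e^{g}\cdot D)(z,w):=\inf_{P}\int_0^{\mathrm{len}(P;D)} e^{g(P(t))}\,dt ,
\end{equation*}
with the infimum over $D$-continuous paths from $z$ to $w$ parametrized by $D$-length. This is one of the axioms characterizing the metric (Section~\ref{subsec:lqg_metrics}), valid for all $\xi\in(0,\xi_c]$. I take Weyl scaling to hold simultaneously for all bounded continuous $f$ almost surely, as in \cite{GM21,ding2024critical}. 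Since $\beta(\xi)$ depends only on the law of $D_h^{\xi}$, dividing by it commutes with Weyl scaling:
\begin{equation*}
\widehat D_{h+f_n}^{\xi_n}=e^{\xi_n f_n}\cdot \widehat D_h^{\xi_n},\qquad \widehat D_{h+f}^{\xi_c}=e^{\xi_c f}\cdot \widehat D_h^{\xi_c}.
\end{equation*}
So the corollary reduces to a deterministic continuity statement for the map $(g,D)\mapsto e^{g}\cdot D$.

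First, by Theorem~\ref{thm:main_result} and a subsequence argument, it suffices to show the following. Suppose $D_n\to D$ locally uniformly, where $D$ and all $D_n$ are length metrics inducing the Euclidean topology. Suppose $g_n\to g$ locally uniformly, with $\sup_n\|g_n\|_\infty<\infty$ (here $g_n=\xi_n f_n$, $g=\xi_c f$). Then $e^{g_n}\cdot D_n\to e^{g}\cdot D$ locally uniformly. Passing to an a.s.\ convergent subsequence of any given subsequence then gives convergence in probability.

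I would split the argument into two reductions.

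\textbf{Changing the weight.} Since $e^{-\|g_n-g\|_{K}}\le e^{g_n-g}\le e^{\|g_n-g\|_K}$ on a compact set $K$, the metrics $e^{g_n}\cdot D_n$ and $e^{g}\cdot D_n$ differ by at most a factor $e^{\|g_n-g\|_K}\to1$ for paths staying in $K$. Geodesics between points of a fixed compact set stay in a slightly larger compact set, uniformly in $n$. This follows from the uniform bound $\|g_n\|_\infty\le C$ together with $D_n\to D$ and $D$ inducing the Euclidean topology: $D_n$-distances from a compact set to the complement of a large ball are bounded below uniformly in $n$.

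\textbf{Changing the metric.} It remains to show $e^{g}\cdot D_n\to e^{g}\cdot D$ for fixed continuous $g$. I would approximate $g$ uniformly on $K$ by functions that are constant on the cells of a fine dyadic grid, and compare path integrals cell by cell. A cleaner route is to use uniform convergence of $D_n$ to $D$ together with the fact that $D$ is a length metric. Upper bound: take a near-geodesic $P$ for $e^{g}\cdot D$ and chop it into small segments, each with small Euclidean diameter (by equicontinuity) so that $g$ is nearly constant along it. Then bound the $e^{g}\cdot D_n$-length of a concatenation of $D_n$-geodesics between the partition points by $\sum_i e^{g(P(t_i))+o(1)}D_n(P(t_i),P(t_{i+1}))$, which converges to $\sum_i e^{g(P(t_i))+o(1)}D(P(t_i),P(t_{i+1}))$. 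The lower bound is symmetric, working with near-geodesics of $D_n$ and using $D_n\to D$ uniformly. The one subtlety is that the $D_n$-near-geodesics between partition points must have small Euclidean diameter uniformly in $n$. This follows from uniform convergence and the fact that $D$ induces the Euclidean topology.

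I expect the main obstacle to be this uniform localization of $D_n$-geodesics, together with the justification that Weyl scaling holds simultaneously for all $f$ for the critical metric. The latter I would cite from the axioms for $\xi=\xi_c$ in \cite{ding2024critical}.
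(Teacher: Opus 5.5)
Your proposal is correct and follows essentially the paper's route: Weyl scaling on an a.s.\ convergent subsequence reduces everything to continuity of $(g,D)\mapsto e^{g}\cdot D$, which is proved by upper and lower bounds obtained by chopping (near-)geodesics into short pieces on which the weight is nearly constant, with localization of geodesics as the key input. (The paper handles $g_n\to g$ and $D_n\to D$ together via oscillation bounds and gets local uniformity from equicontinuity plus Arzel\`a--Ascoli, whereas you split the two changes, but this difference is cosmetic.) One justification needs fixing: confining geodesics between points of a compact set $K$ to a fixed ball, uniformly in $n$, requires $D(K,\partial B_R(0))\to\infty$ as $R\to\infty$. A positive lower bound is not enough, and this property does not follow from $D$ inducing the Euclidean topology, but it does hold for LQG metrics by \cite[Proposition~5.19]{ding2023tightness}, which is exactly what the paper uses.
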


\begin{corollary}\label{cor:all_gamma}
Suppose that we have the same setup as in Theorem~\ref{thm:main_result}. Fix $\xi \in [0,\xi_c]$ and let $(\xi_n)_{n \in \NN}$ be a sequence in $(0,\xi_c)$ such that $\xi_n \to \xi$ as $n \to \infty$. Then, we have that
\begin{equation*}
    \widehat{D}_h^{\xi_n} \to \widehat{D}_h^{\xi} \quad \text{in probability} \quad \text{as} \quad n \to \infty
\end{equation*}
with respect to the local uniform topology of functions on $\CC \times \CC$, where we denote by $\widehat{D}_h^0$ the Euclidean metric on $\CC$.    
\end{corollary}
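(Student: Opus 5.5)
The statement to prove is Corollary~\ref{cor:all_gamma}; it follows by combining Theorem~\ref{thm:main_result} with the continuity results of \cite{kavvadias2025continuity}. I would split into cases according to the value of the limit $\xi$. Convergence in probability in the local uniform topology on $\CC \times \CC$ is metrizable: one can use a metric $d_{lu}$ that sums, over $k$, the sup distance on $B_k(0)\times B_k(0)$ truncated at $1$. So it suffices to show that $\mathbb{E}[d_{lu}(\widehat{D}_h^{\xi_n},\widehat{D}_h^{\xi}) \wedge 1] \to 0$. Moreover, it is enough to show that every subsequence of $(\xi_n)$ has a further subsequence along which this holds.

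\textbf{Case $\xi = \xi_c$.} This is exactly Theorem~\ref{thm:main_result}.

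\textbf{Case $\xi \in (0,\xi_c)$.} I would invoke \cite[Theorems~1.5 and~1.7]{kavvadias2025continuity}, which give continuity of the law of the renormalized $\xi$-LQG metric on $[0,\xi_c)$. One step needs checking: the statement there is convergence in probability, with the field $h$ coupled identically across $\xi$. Its normalization also has to agree with $\beta(\xi)$ as defined in \eqref{eq:renormalization}, or differ from it by constants $c_n \to 1$. If the normalization in \cite{kavvadias2025continuity} uses a different quantile or a different crossing distance, I would fix it as follows.
\begin{itemize}
\item[(a)] Convergence in probability of $\tilde{D}^{\xi_n}=c(\xi_n)^{-1}D_h^{\xi_n}$ to $\tilde{D}^{\xi}$ implies $\tilde{D}^{\xi_n}(0,1)\to\tilde{D}^{\xi}(0,1)$ in law.
\item[(b)] The law of $\tilde{D}^{\xi}(0,1)$ is continuous at its median. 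This follows from Lemma~\ref{lem:normalization_well_defined}: by that lemma, $\mathbb{P}[D_h^{\xi}(0,1)\le \beta(\xi)]=1/2$, and its proof presumably shows that the distribution function is continuous and strictly increasing near the median.
\item[(c)] From (a) and (b), the medians converge, so $\beta(\xi_n)/c(\xi_n)\to \beta(\xi)/c(\xi)$.
\item[(d)] Multiplying by these converging scalars preserves local uniform convergence in probability.
\end{itemize}

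\textbf{Case $\xi = 0$.} Here the limit is the Euclidean metric, and \cite[Theorem~1.7]{kavvadias2025continuity} treats the $\gamma\to0$ limit. I would apply the same normalization-matching argument, noting that the Euclidean distance $|0-1|=1$ is deterministic, so its "median" equals $1$ and step (b) is trivial.

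\textbf{Mixed sequences.} Nothing more is needed: $\xi$ is fixed, and every $\xi_n$ lies in $(0,\xi_c)$, so each case above already covers all such sequences.

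The main obstacle I expect is the normalization matching in the case $\xi\in[0,\xi_c)$. Specifically, I need to verify that the renormalizing constants in \cite{kavvadias2025continuity} are asymptotically equivalent to $\beta(\xi_n)$, which relies on continuity of the law of $D_h^{\xi}(0,1)$ at its median. Apart from that, the corollary is a direct assembly of the cited results.
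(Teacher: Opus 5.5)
Your proposal is correct and is essentially the paper's proof, which is the one-line combination of Theorem~\ref{thm:main_result} (for $\xi=\xi_c$) with \cite[Theorems~1.5 and~1.7]{kavvadias2025continuity} (for $\xi\in[0,\xi_c)$). Your median-matching step makes the normalization explicit, in the same way as the $\widetilde{\beta}$ argument in the proof of Theorem~\ref{thm:main_result}; note that both properties you need in (b) are already in the statement of Lemma~\ref{lem:normalization_well_defined}, and that for $\xi=0$ the limit law is a point mass, which is not continuous at its median, but the medians still converge because the limiting distribution function jumps from $0$ to $1$ at $1$.
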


\begin{proof}[Proof of Corollary~\ref{cor:all_gamma} assuming Theorem~\ref{thm:main_result}.]
It follows immediately from combining Theorem~\ref{thm:main_result} with \cite[Theorems~1.5 and 1.7]{kavvadias2025continuity}.    
\end{proof}

\subsection{Basic notation}
For all $z \in \CC , r>0$, we denote by $B_r(z)$ the Euclidean ball centered at $z$ with radius $r$.

For all $z \in \CC$ and all $0<r_1<r_2$, we set $\mathbb{A}_{r_1,r_2}(z):=B_{r_2}(z) \setminus \overline{B_{r_1}(z)}$. Moreover, for any set $A \subseteq \CC$ and all $r>0$, we denote by $B_r(A)$ the Euclidean neighborhood of $A$ with radius $r$.

For any two integers $a,b \in \NN$ with $a<b$, we set $[a,b]_{\ZZ}:=[a,b] \cap \NN$ and we denote by $|\cdot|$ the Euclidean metric on $\CC$.

For a metric $D: \CC \times \CC \rightarrow \RR \cup \{\pm \infty\}$ and an annular region $\mathbb{A} \subseteq \CC$, we define $D(\text{across}\,\, \mathbb{A})$ to be the $D$-distance between the inner and outer boundaries of $\mathbb{A}$. We also define $D(\text{around} \,\,  \mathbb{A})$ to be the infimum of the $D$-lengths of the paths in $\mathbb{A}$ which disconnect the inner and outer boundaries of $\mathbb{A}$. Moreover, for sets $A,B \subseteq \CC$, we define the distance between $A$ and $B$ with respect to $D$ by
\begin{equation*}
    D(A,B):=\inf\{D(x,y) : x \in A, y \in B\}.
\end{equation*}

For a distribution (generalized function) $h$ on $\CC$ and all $z \in \CC, r>0$, we will denote by $h_r(z)$ the average of $h$ on $\partial B_r(z)$.

We abbreviate \emph{almost surely} by a.s. and we denote by $||f||_{\infty}$ the supremum norm of a function $f : \mathbb{C} \rightarrow \mathbb{R}$.

If $f: (0,\infty) \rightarrow \RR$ and $g : (0,\infty) \rightarrow (0,\infty)$, we say that $f(A) = O_A(g(A))$ as $A \to \infty$ if $\frac{f(A)}{g(A)}$ remains bounded as $A \to \infty$.

Finally, for an open set $D \subseteq \CC$, we will denote by $C_0^{\infty}(D)$ the space of smooth and compactly supported functions $f : \CC \rightarrow \RR$ whose support is a subset of $D$.

\subsection{Outline}
\label{subsec:outline}

Now we explain the main ideas in the proof of Theorem~\ref{thm:main_result}. First, we are going to work with a different re-normalization than the one given in the statement of Theorem~\ref{thm:main_result}. In particular, we let $\alpha : (0,\infty) \rightarrow (0,\infty)$ denote the function introduced in \cite[Section~7]{kavvadias2025continuity} (we are going to define $\alpha$ explicitly in Section~\ref{subsec:lqg_metrics_renormalization}) and set $\widetilde{D}_h^{\xi}:=\alpha(\xi)^{-1} D_h^{\xi}$ for all $\xi>0$, where $h$ is a whole-plane GFF normalized such that $h_1(0) = 0$ and $D_h^{\xi}$ denotes the limiting metric in Theorem~\ref{thm:uniqueness_supercritcal}. Then, we are going to use the following result proved in \cite{kavvadias2025continuity}.
\begin{theorem}(\cite[Theorem~8.19]{kavvadias2025continuity})\label{thm:tightness_lqg_metrics}
Suppose that we have the setup described above and let $(\xi_n)_{n \in \NN}$ be a sequence in $(0,\infty)$ such that $\xi_n \to \xi$ as $n \to \infty$ for some $\xi \in (0,\infty)$. Then, the sequence of metrics $(\widetilde{D}_h^{\xi_n})_{n \in \NN}$ is tight with respect to the topology induced by the convergence in Definition~\ref{def:lower_semi_cont}, and let $(\xi_{k_n})_{n \in \NN}$ be a subsequence such that $(\widetilde{D}_h^{\xi_{k_n}})_{n \in \NN}$ converges in law to some random function $\widetilde{D} : \CC \times \CC \rightarrow \RR \cup \{\pm \infty\}$. Suppose that $\widetilde{D}$ satisfies the triangle inequality a.s. Then, there exists a $\xi$-LQG metric $h \rightarrow \widetilde{D}_h$ such that
\begin{equation*}
    \widetilde{D}_h^{\xi_{k_n}} \rightarrow \widetilde{D}_h \quad \text{in probability} \quad \text{as} \quad n \to \infty.
\end{equation*}  
\end{theorem}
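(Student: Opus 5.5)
The plan follows the usual route: tightness, then an axiomatic characterization of subsequential limits, then uniqueness.

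\textbf{Tightness.} This is essentially soft. The space of lower semicontinuous functions $\CC\times\CC\to\RR\cup\{\pm\infty\}$, with the convergence of Definition~\ref{def:lower_semi_cont}, is an epi-convergence (Fell) type topology on a locally compact space. Such a space is compact as well as separable and metrizable. Hence any sequence of laws on it is tight, and so is the sequence of pairs $(h,\widetilde{D}_h^{\xi_n})$, with $h$ viewed in a negative-index Sobolev space where its law is fixed. I will therefore pass to a subsequence along which $(h,\widetilde{D}_h^{\xi_{k_n}})\to(h,\widetilde{D})$ in law. By Skorokhod, I realize this convergence a.s. on a common probability space.

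\textbf{Axioms for the limit.} Assume $\widetilde{D}$ satisfies the triangle inequality. I then verify, one by one, the axioms characterizing a (weak) $\xi$-LQG metric from \cite{GM21,ding2023uniqueness}.

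\emph{Length space.} I obtain this from the triangle inequality together with the lower semicontinuous convergence. Condition (2) of Definition~\ref{def:lower_semi_cont} lets me pass approximate midpoints of $\widetilde{D}_h^{\xi_{k_n}}$ to the limit, and the triangle inequality then shows that the limiting distances are realized as lengths.

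\emph{Locality.} For open $U$, the internal metric $\widetilde{D}_h^{\xi_n}(\cdot,\cdot;U)$ is determined by $h|_U$. I show that this persists in the limit by proving that the internal metrics also converge jointly along a further subsequence. Then the conditional-independence characterization of locality (given $h|_U$, the internal metric is independent of $h|_{\CC\setminus U}$) passes to the limit.

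\emph{Weyl scaling.} For continuous bounded $f$, $e^{\xi_n f}\cdot\widetilde{D}_h^{\xi_n}=\widetilde{D}_{h+f}^{\xi_n}$. Since $\xi_n\to\xi$ and $f$ is bounded, $e^{\xi_n f}\to e^{\xi f}$ uniformly. Absolute continuity of $h+f$ with respect to $h$ for $f$ smooth and compactly supported then gives $e^{\xi f}\cdot\widetilde{D}=\widetilde{D}_{h+f}$ in the limit.

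\emph{Coordinate change and tightness across scales.} These come from the corresponding exact statements for each $\xi_n$ (with scaling constant $r^{\xi_nQ(\xi_n)}$), together with the continuity of $Q$.

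\emph{Nondegeneracy.} This is where the specific normalization $\alpha(\xi)$ from \cite[Section~7]{kavvadias2025continuity} is needed. I would use the uniform-in-$n$ bounds on the quantiles of $\widetilde{D}_h^{\xi_n}$ across and around annuli at all scales, proved there, so that $\widetilde{D}$ is neither identically $0$ nor identically $\infty$.

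\textbf{Main obstacle and conclusion.} I expect the main obstacle to be exactly this uniformity in $\xi_n$ of the across and around estimates for annuli, since the scaling exponents move with $n$. I would first try to deduce it from the monotonicity of $Q$ and from Kahane/coupling comparisons of fields at nearby $\xi$.

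Once the axioms hold, the uniqueness theorem of \cite{GM21,ding2023uniqueness} gives $\widetilde{D}=c\,D_h^\xi$ a.s., with $D_h^\xi$ a measurable function of $h$; this defines the $\xi$-LQG metric $h\mapsto\widetilde{D}_h$. Finally, because the first coordinate of the joint limit is $h$ itself and the second is a.s. a measurable function of $h$, the standard argument upgrades convergence in law of $(h,\widetilde{D}_h^{\xi_{k_n}})$ to convergence in probability of $\widetilde{D}_h^{\xi_{k_n}}$ to $\widetilde{D}_h$.
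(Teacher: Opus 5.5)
Your overall architecture matches the route the paper points to when it uses this result in the proof of Proposition~\ref{prop:conv_in_prob}: tightness for free, a Skorokhod coupling of $(h,\widetilde{D}_h^{\xi_{k_n}})$, verification of the axioms, uniqueness, and the ``limit is a function of the field'' upgrade to convergence in probability. Your compactness observation for tightness is correct. The problem is that the step you yourself flag as the main obstacle is exactly where the content lies, and you do not supply it. Your suggested route, Kahane-type or coupling comparisons between nearby values of $\xi$ plus monotonicity of $Q$, does not work. As explained in Section~\ref{subsec:main_result}, Kahane's convexity argument does not transfer to metrics because of the infimum over paths. Monotonicity of $Q$ gives no quantitative control of quantiles. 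The missing idea is that uniformity is built into the normalization. The constant $\alpha(\xi)$ is chosen so that $\mathbb{P}[\widetilde{D}_h^{\xi}(\text{around}\,\,\mathbb{A}_{1,2}(0))\le 1]=\mathfrak{p}_0(\xi)$ exactly, where $\mathfrak{p}_0(\xi)$ is the threshold in the percolation Lemma~\ref{lem:high_prob}, whose conclusion holds at a universal rate. By scale and translation invariance, the good-annulus events then have this same pinned probability at every scale and location. Chaining good annuli yields Proposition~\ref{prop:main_lqg_estimate}, with constants independent of $\xi$ after dividing by $r^{\xi Q(\xi)}e^{\xi h_r(0)}$. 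That is what gives finiteness and tightness across scales for the limit. You would still need uniform lower-tail control to get positive definiteness. Note that ``not identically $0$ or $\infty$'' is weaker than what an LQG metric requires, namely $\widetilde{D}(z,w)>0$ for all $z\neq w$ simultaneously.

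There is also a logical gap at the end.
\begin{itemize}
\item Theorem~\ref{thm:uniqueness_lqg_metrics} applies to measurable maps $h\mapsto D_h$ satisfying the axioms for every field $h+f$, whereas your $\widetilde{D}$ is only a random variable coupled with $h$. You use uniqueness to conclude that $\widetilde{D}$ is a function of $h$. But you need this beforehand, both to make sense of $\widetilde{D}_{h+f}$ in the Weyl scaling axiom and to invoke uniqueness at all, so the argument is circular.
\item Your conditional-independence form of locality does not supply this, since it is vacuous for $U=\CC$. You need a separate input, such as the Gwynne--Miller theorem that a local, $\xi$-additive coupling $(h,D)$ has $D$ a.s.\ determined by $h$ (or its lower semicontinuous analogue). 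It is applied once locality and additivity are established in coupling form.
\item Neither internal metrics nor the operation $D\mapsto e^{\xi f}\cdot D$ is continuous for the convergence of Definition~\ref{def:lower_semi_cont}. So identifying subsequential limits of $\widetilde{D}_h^{\xi_{k_n}}(\cdot,\cdot;U)$ with $\widetilde{D}(\cdot,\cdot;U)$, and passing Weyl scaling to the limit, each need a real argument. Compactness plus absolute continuity is not enough.
\item Your argument only gives convergence along a further subsequence on which $(h,\widetilde{D}_h^{\xi_{k_n}})$ converges jointly. To get it along all of $(\xi_{k_n})$, note that the constant in $\widetilde{D}=cD_h^{\xi}$ is determined by the fixed law of $\widetilde{D}$.
\end{itemize}
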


Recall that limits of sequences of continuous functions with respect to the local uniform topology satisfy the triangle inequality and that local uniform convergence implies convergence in the sense of Definition~\ref{def:lower_semi_cont}. Thus, applying Theorem~\ref{thm:tightness_lqg_metrics} with $\xi = \xi_c$ and combining with the uniqueness (up to a multiplicative constant) of LQG metrics (see Section~\ref{subsec:lqg_metrics}), it suffices to prove that the sequence of metrics $(\widetilde{D}_h^{\xi_n})_{n \in \NN}$ is tight with respect to the local uniform topology of functions on $\CC \times \CC$ whenever $(\xi_n)_{n \in \NN}$ is a sequence in $(0,\xi_c)$ such that $\xi_n \to \xi_c$ as $n \to \infty$.

To prove the latter claim, we will prove the following quantitative estimate.
\begin{proposition}\label{prop:quantitative_estimate}
Fix $\theta \in (0,\xi_c / 8), \varepsilon \in (0,1)$, and let $U \subseteq \CC$ be a bounded open set. Then, there exists a deterministic constant $C \in (0,\infty)$ (depending only on $\varepsilon,\theta,(\xi_n)_{n \in \NN}$, and $U$) such that the following holds for all $n \in \NN$. With probability at least $1-\varepsilon$, we have that
\begin{equation*}
   \widetilde{D}_h^{\xi_n}(z,w) \leq C \left(\max\left\{1,\log\left(\frac{1}{|z-w|}\right)\right\}\right)^{-\theta} \quad \text{for all} \quad z,w \in U.
\end{equation*}    
\end{proposition}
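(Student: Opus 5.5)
The plan is to adapt the chaining argument behind the logarithmic modulus of continuity of the critical metric (\cite[Theorem~1.7]{ding2024critical}), keeping every constant uniform in $\xi\in(0,\xi_c)$ close to $\xi_c$. The basic quantity is the normalized distance around annuli, $\widetilde D_h^{\xi_n}(\text{around }\mathbb{A}_{r,2r}(z))$, and the key input is a uniform-in-$n$ single-scale estimate. After subtracting the field average and applying Weyl scaling, this distance is at most $r^{\xi_n Q(\xi_n)} e^{\xi_n h_r(z)}\,\widetilde c_r(\xi_n)\,X$, where $\widetilde c_r(\xi_n)$ is the scaling constant of the normalized metric and $X$ has tails bounded uniformly in $n$. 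I expect the tightness estimates of \cite{kavvadias2025continuity} that produce Theorem~\ref{thm:tightness_lqg_metrics}, with the explicit normalization $\alpha(\xi)$, to supply two things. The first is uniform lognormal-type tails for $X$, coming from uniform concentration of the crossing distances. The second is the scaling constants being comparable to $r^{\xi_n Q(\xi_n)}$ up to $r^{o(1)}$ factors, uniformly in $n$. Since $Q(\xi_n)\downarrow 2$, we have $\xi_n Q(\xi_n)\to 2\xi_c$.

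Next I would bound a single pair $z,w\in U$ with $|z-w|\asymp 2^{-k}$ by chaining. For each $j\ge k$, choose an annulus $\mathbb{A}_{2^{-j},2^{-j+1}}(u)$ from a dyadic grid near $z$ (and similarly near $w$). Loops around consecutive annuli intersect, so $\widetilde D_h^{\xi_n}(z,w)\lesssim\sum_{j\ge k}\max_{u}\widetilde D_h^{\xi_n}(\text{around }\mathbb{A}_{2^{-j},2^{-j+1}}(u))$. Each term is about $2^{-j\xi_n(Q(\xi_n)-M_j)}$, where $M_j$ is the maximal thickness $h_{2^{-j}}(u)/\log 2^j$ over the grid.

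The main difficulty is that $M_j\approx 2$ while $Q(\xi_n)\to 2$, so this geometric sum gives only polynomial decay with an exponent degenerating to $0$. That is useless uniformly in $n$; this is exactly why the argument of \cite[Section~3]{DFGPS20} fails, and why the target bound is only $(\log |z-w|^{-1})^{-\theta}$. To handle it, I would use the refined maximum of the GFF on the grid: $\max_u h_{2^{-j}}(u)\le 2\log 2^j-\tfrac{3}{2}\log\log 2^j+O(\log\log j)$ with summable failure probabilities. This gives terms bounded by
\begin{equation*}
2^{-j\xi_n(Q(\xi_n)-2)}\,j^{-3\xi_n/2+o(1)}.
\end{equation*}
The factor $2^{-j\xi_n(Q(\xi_n)-2)}$ is at most $1$. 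Summing $j^{-3\xi_n/2}$ from $j=k$ naively diverges, because $\xi_c<2/3$. The fix is to avoid the worst annulus at every scale, as in \cite{ding2024critical}. Among the $\asymp\log k$ candidate annuli around $z$ at each scale, an independence-across-scales percolation argument selects one whose field average lies well below the maximum. The sum then telescopes to $\lesssim k^{-\theta'}$ for every $\theta'<\xi_c/8$, with constants depending only on the uniform bounds. This explains the restriction $\theta<\xi_c/8$: it is the exponent produced by this good-annulus selection in the critical-type chaining.

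Finally, I would take a union bound over dyadic pairs at all scales $k$ and all $z,w$ in a grid of $U$, using a uniform supremum bound $\sup_{u,v\in U}\widetilde D_h^{\xi_n}(u,v)\le C$ for the base scale. Together with the pair estimate, this gives the statement with probability at least $1-\varepsilon$ and $C$ independent of $n$. The step I expect to be hardest is the selection of good annuli: the total failure probability must be summable uniformly in $n$, even though the metric is only barely subcritical.
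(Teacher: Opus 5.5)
Your proposal has a genuine gap at the step that is supposed to beat the near-critical divergence. You correctly identify the obstruction: the factor $e^{-\xi(Q(\xi)-2)t}\le 1$ gives nothing, and per-scale bounds coming from the field maximum are not summable.

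\textbf{The main gap: good-annulus selection cannot work.} Your chain bounds the cost at scale $j$ by the worst grid annulus and sums over $j$. At every scale some circle averages lie within $O(\log j)$ of $2\log 2^j$, so this sum diverges. Selecting among ``candidate annuli'' cannot help. A path from $z$ to $\partial B_r(z)$ must cross every annulus $\mathbb{A}_{s,2s}(z)$ with $s<r/2$. Moreover, all annuli of comparable size centered near $z$ have field average $h_s(z)+O(1)$, so there is nothing to select. The cost is dictated by the single trajectory $t\mapsto h_{e^{-t}}(z)$.

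The missing idea is a per-point estimate across scales showing that no point is near-thick at many scales. The paper obtains it by splitting scales into two cases.
\begin{itemize}
\item Where $h_{e^{-t}}(z)-h_1(z)\le 2t-\beta\log t$ with $\beta(\xi-\delta)>1+\theta$, the costs $t^{-\beta(\xi-\delta)}$ have tail sum $\lesssim m^{-\theta}$ (Lemma~\ref{lem:distance_circle_small}).
\item Otherwise, one conditions on $h_{e^{-n}}(z)$. The circle-average process on $[0,n]$ is then a Brownian bridge that must also stay below $2t-\alpha\log n$. Lemma~\ref{lem:brownian_bridge} bounds its occupation time above $2t-\beta\log n$ on $[n/2,n]$ by $(\log n)^4$.
\end{itemize}
Together these give $\widetilde D_h^{\xi}(\partial B_{e^{-n-50}}(z),\partial B_{e^{-n/2-1}}(z))\le Cn^{-\theta}$ for all $z\in U$ (Lemmas~\ref{lem:distance_circle_large}--\ref{lem:distance_poly}). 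The blocks $n=2^j$ are then summed geometrically, using one loop per block rather than one per scale (Lemmas~\ref{lem:around_poly} and~\ref{lem:distance_dyadic}). Per-scale loop costs $\lesssim n^{-\theta}$ would not be summable.

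\textbf{The maximum bound you quote is false.} Write $t=\log 2^j$. The bound $2t-\tfrac32\log t$ lies below even the typical maximum $2t-\tfrac34\log t$ at a single scale. Uniformly over all scales, with summable failure probabilities, one only has $2t-\alpha\log t$ for $\alpha<1/4$ (Proposition~\ref{prop:circle_average_bound}), and this is sharp. That $1/4$, multiplied by the lower bound $\xi\ge\xi_c/2$, is exactly where the restriction $\theta<\xi_c/8$ comes from (Lemma~\ref{lem:around_poly}).

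\textbf{Loops around consecutive annuli need not intersect.} Loops around the nested annuli $\mathbb{A}_{2^{-j},2^{-j+1}}$ and $\mathbb{A}_{2^{-j-1},2^{-j}}$ lie in disjoint regions. The paper links its around-loops $P_n(z_n)$ using across-paths $Q_n(z_n)$ of the larger annuli $\mathbb{A}_{e^{-n-100},e^{-n}}(z_n)$ (Lemma~\ref{lem:stringing_paths}).

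\textbf{A uniform tail for $X$ does not survive the maximum over the grid.} A union bound over $\asymp 4^j$ grid points controls $\max_u X_u$ only by $2^{cj}$ for polynomial tails, or by $e^{c\sqrt j}$ for lognormal tails. Either factor swamps any polylogarithmic gain. The paper instead uses two facts:
\begin{itemize}
\item $M_{e^{-t}}(z)$ is independent of $h_{e^{-t}}(z)-h_1(z)$ (Proposition~\ref{prop:conditional_independence});
\item only about $e^{2k}$ grid points have $2t-(h_{e^{-t}}(z)-h_1(z))\in[k,k+1]$.
\end{itemize}
With the tail exponent $b(\xi)\sqrt{M(\xi)}$ made large through the choice of normalization, this yields $M_{e^{-n}}(z)\le C\exp((2n-h_{e^{-n}}(z))/A)$ (Lemma~\ref{lem:main_estimate}). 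That error is then absorbed by replacing $\xi$ with $\xi-\delta$ (Lemma~\ref{lem:stringing_paths_1}).

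\textbf{Scaling.} The scaling factor is exactly $r^{\xi Q(\xi)}$ by Axiom~\eqref{it:scale_translation_invariance}. An unquantified $r^{o(1)}$ loss, as you allow, would already be fatal at this precision.
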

Then, the tightness of $(\widetilde{D}_h^{\xi_n})_{n \in \NN}$ with respect to the local uniform topology on $\CC \times \CC$ will follow from combining Proposition~\ref{prop:quantitative_estimate} with the Skorokhod representation theorem.

To prove Proposition~\ref{prop:quantitative_estimate}, we will use an argument which is similar to the argument used for the proof of \cite[Theorem~1.7]{ding2024critical}. In particular, Proposition~\ref{prop:quantitative_estimate} will follow from two key estimates.

\textbf{First key estimate.} The first key estimate for the proof of Proposition~\ref{prop:quantitative_estimate} is the following (see Proposition~\ref{prop:main_lqg_estimate}). Fix $N>1$ and a Euclidean annulus $\mathbb{A}$. Then, we can choose the re-normalization $\widetilde{D}_h^{\xi}$ such that the following holds. There exists a constant $C \in (0,\infty)$ depending only on $\mathbb{A}$ such that we have for all $z \in \CC, S>1, r>0$, and $\xi>0$, that
\begin{equation}\label{eq:main_quantitative_estimate}
\mathbb{P}\left[\max\left\{\widetilde{D}_h^{\xi}(\text{around}\,\, r \mathbb{A} + z) , \widetilde{D}_h^{\xi}(\text{across}\,\, r \mathbb{A} + z)\right\} > S r^{\xi Q(\xi)} e^{\xi h_r(z)} \right ] \leq C S^{-N}.
\end{equation}

\textbf{Second key estimate.} The second key estimate is a tail bound for the maximum of the circle average process which is obtained in \cite[Proposition~2.4]{ding2024critical}. It states that for any bounded open set $U \subseteq \CC$ and any $\alpha \in (0,1/4)$, a.s. there exists (random) $C \in (0,\infty)$ such that
\begin{equation}\label{eq:circle_average_estimate}
|h_{e^{-n}}(z)| \leq 2 n - \alpha \log n + C \quad \text{for all} \quad n \in \NN, z \in (e^{-n-100} \ZZ^2) \cap U.    
\end{equation}

\textbf{Conclusion of the proof.} Next, we explain how we will use the two key estimates to obtain Proposition~\ref{prop:quantitative_estimate}. 

Fix $\varepsilon \in (0,1)$ small, $A \in (2,\infty)$ large, and let $U \subseteq \CC$ be a bounded open set. Then, using the independence properties of the  GFF (see \cite[Lemma~3.4]{ding2024critical}), we obtain a version of \eqref{eq:main_quantitative_estimate} when we \emph{condition} on the circle average $h_r(z)$ (see Lemma~\ref{lem:conditional_circle_average}). Then, by taking a union bound over all $z \in (e^{-m-100} \ZZ^2) \cap U$ and combining with standard estimates on circle averages of the GFF, we obtain that the following holds (see Lemmas~\ref{lem:main_estimate} and ~\ref{lem:uniform_circle_ave}). There exists constant $C \in (0,\infty)$ depending only on $\varepsilon,A,U$, and the sequence $(\xi_n)_{n \in \NN}$ such that the following holds for all $n \in \NN$. With probability at least $1-\varepsilon$, we have for all $m \in \NN, z \in U$, that
\begin{align}\label{eq:bounds_on_distances}
&\max\left\{\widetilde{D}_h^{\xi_n}(\text{around}\,\, \mathbb{A}_{e^{-m-51/100},e^{-m-1/2}}(z)) , \widetilde{D}_h^{\xi_n}(\text{across}\,\, \mathbb{A}_{e^{-m-100},e^{-m}}(z))\right\}\\\nonumber
&\leq C \exp\left(\xi_n h_{e^{-m}}(z) - \xi_n Q(\xi_n) m + \left(\frac{2m - h_{e^{-m}}(z)}{A}\right)\right).   
\end{align}

Suppose that we are working on the event that \eqref{eq:bounds_on_distances} occurs. Fix $k,m \in \NN$ with $k<m$. By stringing together paths in the annuli $\mathbb{A}_{e^{-\ell-100},e^{-\ell}}(z)$ and $\mathbb{A}_{e^{-\ell-51/100},e^{-\ell-1/2}}(z)$ for $\ell = 0,\cdots,m$, we obtain from \eqref{eq:bounds_on_distances} (see Lemma~\ref{lem:stringing_paths}) that for all $z \in U$,
\begin{align}\label{eq:sum_of_distances}
\widetilde{D}_h^{\xi_n}(\text{across}\,\, \mathbb{A}_{e^{-m},e^{-k}}(z)) \leq C \sum_{\ell = k}^m \exp\left(\xi_n h_{e^{-\ell}}(z) - \xi_n Q(\xi_n) \ell + (2+2\xi_n) \left(\frac{2\ell - h_{e^{-\ell}}(z)}{A}\right)\right).   
\end{align}
Next, using continuity estimates for the circle average process, we can replace the sum by an integral on the right side of \eqref{eq:sum_of_distances}. Moreover, combining with the lower semicontinuity of the metric and taking $m \to \infty$ on both sides of \eqref{eq:sum_of_distances} gives for all $z \in U$, that
\begin{align}\label{eq:integral_bound}
\widetilde{D}_h^{\xi_n}(z,\partial B_{e^{-k}}(z)) \leq C \int_k^{\infty} \exp\left(\xi_n h_{e^{-t}}(z) - \xi_n Q(\xi_n) t + (2+2\xi_n) \left(\frac{2t - h_{e^{-t}}(z)}{A}\right) \right) dt.  
\end{align}

Now, we would like to conclude the proof of Proposition~\ref{prop:quantitative_estimate} by combining \eqref{eq:circle_average_estimate} with \eqref{eq:integral_bound}. Then, by possibly taking $C$ to be larger (in a way that depends only on $\varepsilon,A,U$, and the sequence $(\xi_n)_{n \in \NN}$), we would obtain that the term on the right side of \eqref{eq:integral_bound} is bounded from above by 
\begin{equation}\label{eq:refined_integral_bound}
C \int_k^{\infty} \exp\left(-\alpha \xi_n \log t + (2-Q(\xi_n)) \xi_n t + \left(\frac{2+2\xi_n}{A}\right) (2t - h_{e^{-t}}(z)) \right) dt.  
\end{equation}
Note that $2t - h_{e^{-t}}(z) > 0$ for all $z \in U$ and all $t>0$ sufficiently large by \eqref{eq:circle_average_estimate}. Thus, since $\alpha \xi_c <1$ and $Q(\xi_n) \downarrow 2$ as $n \to \infty$, we obtain that the integral in \eqref{eq:refined_integral_bound} converges to $\infty$ as $n \to \infty$, for all $k \in \NN$ fixed. This implies that we need more refined control on the maximum of the circle average process than what we get from \eqref{eq:circle_average_estimate}.

To deal with the above issue, we fix $\theta \in (0,\xi_c / 8)$ and $\beta > (1+\theta) / \xi_c$. We will argue as in \cite[Section~3]{ding2024critical}. In particular, we will show that for every point $z \in U$, the set of times $t$ for which $h_{e^{-t}}(z) \geq 2 t -\beta \log t$ is small. The main tool to prove this is \cite[Lemma~3.12]{ding2024critical} which roughly states that a Brownian bridge on $[0,T]$ cannot spend too much time above $2t - \beta \log t$ during the time interval $[T / 2 , T]$ if it is constrained to stay below $2t - \alpha \log t$ whenever $0<\alpha < \beta$. Therefore, combining with the fact that $t \rightarrow h_{e^{-t}}(z) - h_1(z)$ is a standard linear Brownian motion, we obtain that we can control the amount of time that $h_{e^{-t}}(z)$ spends above $2t - \beta \log t$. This will give us a sharper upper bound on $\widetilde{D}_h^{\xi_n}(z,\partial B_{e^{-k}}(z))$ than the one obtained in \eqref{eq:refined_integral_bound} (see Lemmas~\ref{lem:distance_circle_large} and ~\ref{lem:large_at_some_t}).

\textbf{Outline of Section~\ref{sec:proof_of_main_result}.} Section~\ref{sec:proof_of_main_result} is the core part of our proof, where we will make the arguments described above rigorous.

In Section~\ref{subsec:estimates_across_around_annuli}, we will prove the estimate \eqref{eq:bounds_on_distances}.

In Section~\ref{subsec:stringing_paths}, we will explain how we will use \eqref{eq:bounds_on_distances} to string together paths and obtain a version of \eqref{eq:sum_of_distances}.

In Section~\ref{subsec:distances_between_scales}, we will give an upper bound on the time that $h_{e^{-t}}(z)$ spends above $2t - \beta \log t$. This will give us an upper bound on the sum on the right side of \eqref{eq:integral_bound}.

In Section~\ref{subsec:proofs_of_main_results}, we will conclude the proofs of Theorem~\ref{thm:main_result} and Corollary~\ref{cor:main_result}.

\textbf{Acknowledgements.} The author was supported by the Simons Collaboration Grant
\emph{Probabilistic Paths to Quantum Field Theory}.

\section{Preliminaries}
\label{sec:preliminaries}

\subsection{Gaussian free field}
\label{subsec:gff}

Throughout this paper, we shall work with a whole-plane GFF $h$ which is defined as the centered Gaussian process $h$ with covariances given by
\begin{equation}\label{eqn:gff_covariance}
    \Cov(h(u) , h(v)) = G(u,v) = \log \,\,\frac{\max\{|u|,1\}\max\{|v|,1\}}{|u-v|}
\end{equation}
for all $u,v \in \CC$. Since the covariance kernel $G(u,v)$ explodes along the diagonal, $h$ cannot be well-defined pointwise almost surely. However, it is well-defined as a distribution (generalized function) in the sense that almost surely, for any bump function $\phi$, the average $(h,\phi) = \int h(u) \phi(u) du$ is well-defined. Moreover, it is shown in \cite[Proposition~3.1]{DS09} that if $h_r(z)$ denotes the average of $h$ on $\partial B_r(z)$, there almost surely exists a version of $h$ such that the map $(z,r) \mapsto h_r(z)$ is almost surely continuous. Also, the renormalization in \eqref{eqn:gff_covariance} is chosen so that $h_1(0) = 0$. Furthermore, the law of $h$ is scale and translation invariant in the sense that for every fixed $z \in \CC, r>0$, the laws of the fields $h(\cdot),h(\cdot+z) - h_1(z)$ and $h(r \cdot) - h_r(0)$ are the same.

Let us define explicitly the topology on the set of distributions on $\CC$ that we are going to consider. For a simply connected domain $D \subseteq \CC$ with harmonically non-trivial boundary, we let $H_0(D)$ denote the Hilbert space closure of $C_0^{\infty}(\CC)$ with respect to the Dirichlet inner product 
\begin{equation*}
    (f,g)_{\nabla}:=\frac{1}{2\pi} \int_D \nabla f(z) \cdot \nabla g(z) dz.
\end{equation*}
Moreover, we let $H^{-1}(D)$ denote the dual space of $H_0(D)$. Let also $H_0(\CC)$ denote the Hilbert space closure with respect to $(\cdot,\cdot)_{\nabla}$ of the set of $f \in C_0^{\infty}(\CC)$ such that $\int_{\CC} f(z) dz = 0$.

The whole-plane GFF is a.s. an element of $H_{\text{loc}}^{-1}(\CC)$, where we denote by $H_{\text{loc}}^{-1}(\CC)$ the set of all generalized functions $\widetilde{h}$ on $\CC$ such that $\widetilde{h}|_{B_R(0)} \in H^{-1}(B_R(0))$ for all $R>0$. We can define a topology on $H_{\text{loc}}^{-1}(\CC)$ by requiring that a sequence $(\widetilde{h}_n)_{n \in \NN}$ in $H_{\text{loc}}^{-1}(\CC)$ converges to some $\widetilde{h} \in H_{\text{loc}}^{-1}(\CC)$ as $n \to \infty$ if and only if 
\begin{equation*}
\widetilde{h}_n|_{B_R(0)} \to \widetilde{h}|_{B_R(0)} \quad \text{as} \quad n \to \infty \quad \text{in} \quad H^{-1}(B_R(0)) \quad \text{for all} \quad R>0.    
\end{equation*}

It is easy to see (see e.g., \cite[Sections~1.6 and 1.7]{powell2025gaussian}) that $H_{\text{loc}}^{-1}(\CC)$ becomes a separable and complete metric space with the above topology.

\subsection{Properties of GFF circle averages}
\label{subsec:gff_averages}

Next, we state two useful results about the circle average process of a whole-plane GFF $h$ normalized so that $h_1(0) = 0$. The first one (Proposition~\ref{prop:circle_average_bound}) is an estimate for the maximum of the GFF and it is one of the key estimates for the proof of Proposition~\ref{prop:quantitative_estimate} as explained in Section~\ref{subsec:outline}. The second one (Proposition~\ref{prop:conditional_independence}) will allow us to obtain a version of \eqref{eq:main_quantitative_estimate} when we condition on the circle average $h_r(z)$.

\begin{proposition}(\cite[Proposition~2.4]{ding2024critical}) \label{prop:circle_average_bound}
Let $U \subseteq \CC$ be a bounded open set, let $\alpha \in (0,1/4)$, and let $k \in \NN$. Almost surely, there is a random $C>0$ such that
\begin{equation*}
    |h_{e^{-n}}(z)| \leq 2n - \alpha \log n + C,\quad \text{for all} \quad n \in \NN, z \in (e^{-n-k} \ZZ^2) \cap U.
\end{equation*}
\end{proposition}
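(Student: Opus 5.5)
The plan is a first-moment (union bound) argument over lattice points and scales, sharpened by a ballot-type barrier estimate for the Brownian motion $t\mapsto h_{e^{-t}}(z)$. The bound only needs proving for large $h_{e^{-n}}(z)$: the lower tail $-h_{e^{-n}}(z)$ is handled identically by symmetry of $h$.

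\textbf{Why the naive union bound fails.} Fix $n$ and $z\in (e^{-n-k}\ZZ^2)\cap U$. Since $U$ is bounded, $h_{e^{-n}}(z)$ is centered Gaussian with variance $n+O(1)$. The number of such points is $O(e^{2n})$, with the constant depending on $k$ and $U$. The Gaussian tail gives
\begin{equation*}
\mathbb{P}\left[h_{e^{-n}}(z)>2n-\alpha\log n + x\right]\lesssim n^{-1/2}\exp\left(-\frac{(2n-\alpha\log n+x)^2}{2n}\right)\lesssim n^{2\alpha-1/2}e^{-2n-2x}.
\end{equation*}
Summing over $z$ then gives $O(n^{2\alpha-1/2}e^{-2x})$. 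This is not summable in $n$, so we must use the fact that the whole trajectory of $t\mapsto h_{e^{-t}}(z)$ has to stay below the curve before time $n$.

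\textbf{Barrier (first-exceedance) decomposition.} Let $f(m)=2m-\alpha\log m$. Declare a failure at scale $n$ for the threshold $C$ if some lattice point $z$ at scale $n$ has $h_{e^{-n}}(z)>f(n)+C$. Consider the smallest failing $n$. At every earlier integer scale $m<n$, the circle averages around $z$ are then controlled by the bound at lattice points of scale $m$, up to the discrepancy $|h_{e^{-m}}(z)-h_{e^{-m}}(z')|$ with $z'$ the nearest scale-$m$ lattice point.

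This discrepancy is a Gaussian of bounded variance, because $|z-z'|\le e^{-m-k}$. I would handle it by adding a separate event: all such discrepancies, along a single branch of nested lattice points, are at most $C'+\tfrac{1}{10}\log m$. A union bound over $O(e^{2m})$ pairs, using only the Gaussian tail at each scale, does not give this directly. So I would instead bound discrepancies along a fixed ancestral chain and absorb them into a slightly perturbed barrier $f(m)+C+C'+\delta\log m$ with small $\delta>0$.

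\textbf{Ballot estimate.} On this event, $E_n(z)$ requires the Brownian motion $B_t=h_{e^{-t}}(z)-h_1(z)$ to satisfy two things: it stays below $f(m)+C+C'+\delta\log m$ at integer times $m<n$, and it ends above $f(n)+C$ at time $n$. Apply Girsanov to remove the drift $2$, and condition on the endpoint. The standard ballot estimate for a Brownian bridge staying below a curve that deviates logarithmically from linear then gives an extra factor $O\big(C\,(C+\log n)\,n^{-1}\big)$ compared to the naive bound. We obtain
\begin{equation*}
\mathbb{P}\Big[\bigcup_z E_n(z)\Big]\lesssim C^2 e^{-2C}\, n^{2\alpha+2\delta-3/2}(\log n)^{O(1)}.
\end{equation*}
Since $\alpha<1/4$, we can choose $\delta$ small so that $2\alpha+2\delta-3/2<-1$, and the sum over $n$ is finite and tends to $0$ as $C\to\infty$. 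Letting $C\to\infty$ shows that a.s. only a finite random $C$ is needed. Finitely many small $n$ are absorbed into $C$.

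\textbf{Main obstacle.} The delicate step is transferring the barrier from the coarse lattice at scale $m$ to the specific point $z$ at scale $n$, without losing more than $o(\log m)$ per scale. I would first try to control $\sup_{|z-z'|\le e^{-m-k}}|h_{e^{-m}}(z)-h_{e^{-m}}(z')|$ via a chaining/Kolmogorov estimate. Together with Borel--Cantelli, this gives a.s. a bound $O(\sqrt{m})$, which is too crude. Hence I expect to instead run the argument directly on the tree of nested lattice points, as in branching random walk, treating the bounded-variance increments between a point and its ancestor as independent-ish perturbations with Gaussian tails.
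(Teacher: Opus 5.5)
The paper gives no proof of this statement. It imports it from \cite[Proposition~2.4]{ding2024critical}, where (as far as I recall) it is a Borel--Cantelli consequence of the known sharp right tail for the maximum of the log-correlated field at scale $e^{-n}$, of the form $\mathbb{P}[\max_{z} h_{e^{-n}}(z)>2n-\tfrac34\log n+x]\lesssim(1+x)e^{-2x}$, applied with $x=(\tfrac34-\alpha)\log n+C$. Summability there is exactly $2(\tfrac34-\alpha)>1$, i.e.\ $\alpha<\tfrac14$. Your barrier/ballot first moment is in effect a reproof of that tail bound, and your exponent condition $2\alpha-\tfrac32<-1$ is the same one, so the strategy is sound. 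However, the step you flag as the main obstacle is the real content of such a reproof, and your proposal leaves it unresolved. That is a genuine gap.

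\emph{Why the proposed handling does not close.} The minimal-failure reduction only gives, for $z$ at scale $n$, the random barrier $h_{e^{-m}}(z)\le f(m)+C+D_m(z)$ for $m<n$, where $D_m(z)=h_{e^{-m}}(z)-h_{e^{-m}}(z'_m)$. You run the ballot estimate ``on the event'' $\{D_m(z)\le C'+\delta\log m\ \forall m\}$ but never bound the complementary contribution. With an $n$-independent allowance it cannot be bounded by discarding the barrier. That event has probability of order $e^{-cC'^2}$, which does not decay in $n$. You are then left with roughly $e^{2n}\cdot e^{-2n}n^{2\alpha-1/2}e^{-2C}\cdot e^{-cC'^2}$ per scale, which is not summable. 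The remark about the ``tree of nested lattice points'' is not an argument.

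\emph{What fixes it.} Two ingredients close the gap.
\begin{enumerate}
\item A harmonicity computation gives $\Cov(D_m(z),h_{e^{-n}}(z)-h_1(z))=0$ for $m<n$. Indeed, $D_m$ has total mass zero, so the normalization terms in $G$ cancel. Also, $\rho_{e^{-m}}(z)$ and $\rho_{e^{-m}}(z'_m)$ have the same logarithmic potential on $\partial B_{e^{-n}}(z)$ and the same average potential over $\partial B_1(z)$. Hence $(D_m(z))_{m<n}$ is independent of the endpoint, and the bad-chain term factorizes.
\item Let the allowance grow with $n$, e.g.\ $K\sqrt{\log n}+\delta\log(m+1)$. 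This shifts the barrier by $O(\sqrt{\log n})$, so the ballot bound only picks up a polylogarithmic factor. Meanwhile the bad-chain probability becomes $O(n^{-cK^2})$, so its contribution is $O(n^{2\alpha-1/2-cK^2}e^{-2C})$, which is summable for $K$ large.
\end{enumerate}

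\emph{Smaller omissions.} You also need to account for $h_1(z)$, which shifts both the barrier and the endpoint for $B_t=h_{e^{-t}}(z)-h_1(z)$; for instance, work on $\{\sup_{U'}|h_1|\le C''\}$. And run the minimal-failure argument on nested neighbourhoods of $U$, so that the ancestors $z'_m$ lie in the region where ``no failure'' is known.
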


\begin{proposition}(\cite[Lemma~3.4]{ding2024critical}) \label{prop:conditional_independence}
For each $t \in \RR$, the process $\{h_{e^{-s}}(0) - h_{e^{-t}}(0) : s \leq t \}$ is independent from $(h-h_{e^{-t}}(0))|_{B_{e^{-t}}(0)}$.  
\end{proposition}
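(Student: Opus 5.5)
The plan is to deduce the statement from the decomposition of the whole-plane GFF into a radially symmetric part and a lateral part, together with the independent increments of the circle average process at $0$. Set $r=e^{-t}$. All objects involved are jointly Gaussian: the circle averages $h_{e^{-s}}(0)$, and the pairings $(h-h_r(0),\phi)$ with $\phi\in C_0^\infty(B_r(0))$. So it suffices to show that the covariance between $h_{e^{-s}}(0)-h_{r}(0)$ for $s\le t$ and $(h - h_r(0),\phi)$ vanishes for every such $\phi$. Both quantities are invariant under adding a constant to $h$, so the normalization $h_1(0)=0$ plays no role and we may compute directly with the kernel $G$ from \eqref{eqn:gff_covariance}.

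\textbf{Step 1 (the key computation).} For fixed $v$ and $\rho>0$, I will compute the average of $u\mapsto G(u,v)$ over $\partial B_\rho(0)$. By the mean value property of $\log|u-v|$, namely $\frac{1}{2\pi}\int_0^{2\pi}\log|\rho e^{i\vartheta}-v|\,d\vartheta=\log\max\{\rho,|v|\}$, this average equals
\begin{equation*}
g_\rho(|v|):=\log\frac{\max\{\rho,1\}\max\{|v|,1\}}{\max\{\rho,|v|\}} .
\end{equation*}
The essential feature is that $g_\rho$ depends on $v$ only through $|v|$.

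\textbf{Step 2 (splitting the test function).} Write $\phi=\phi_{\mathrm{rad}}+\phi_{\mathrm{lat}}$, where $\phi_{\mathrm{rad}}(v)$ is the average of $\phi$ over $\partial B_{|v|}(0)$. Then $\phi_{\mathrm{lat}}$ has zero mean on every circle centered at $0$, and both pieces are supported in $B_r(0)$.
\begin{itemize}
\item \emph{Lateral piece.} By Step 1, $\Cov(h_\rho(0),(h,\phi_{\mathrm{lat}}))=\int g_\rho(|v|)\phi_{\mathrm{lat}}(v)\,dv=0$ for every $\rho$. Also $(h_r(0),\phi_{\mathrm{lat}})=h_r(0)\int\phi_{\mathrm{lat}}=0$. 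Hence the lateral part is uncorrelated with, and therefore independent of, the entire circle average process $\{h_\rho(0)\}_{\rho>0}$.
\item \emph{Radial piece.} Integrating in polar coordinates gives $(h,\phi_{\mathrm{rad}})=\int_0^r 2\pi\rho\,\phi_{\mathrm{rad}}(\rho)\,h_\rho(0)\,d\rho$. Therefore $(h-h_r(0),\phi_{\mathrm{rad}})$ is a functional of $\{h_{e^{-u}}(0)-h_{e^{-t}}(0):u\ge t\}$ alone.
\end{itemize}

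\textbf{Step 3 (independent increments).} From Step 1, $\Cov(h_{e^{-s}}(0),h_{e^{-u}}(0))=g_{e^{-s}}(e^{-u})$, which for $s,u\ge 0$ equals $\min\{s,u\}$. For $s,u\le 0$ it equals $\min\{-s,-u\}$, and for $s,u$ of opposite signs it equals $0$. So $s\mapsto h_{e^{-s}}(0)$ is a two-sided standard Brownian motion with value $0$ at $s=0$. Its increments $\{B_s-B_t\}_{s\le t}$ and $\{B_u-B_t\}_{u\ge t}$ are therefore independent for every $t\in\RR$; when $t<0$ one checks the covariance formula directly, which is a one-line computation. Combining this with Step 2, and with the fact that the lateral part is independent of the whole radial process, gives joint independence of $\{h_{e^{-s}}(0)-h_{e^{-t}}(0)\}_{s\le t}$ from the pair (radial part inside $B_r$, lateral part), and hence from $(h-h_r(0))|_{B_r(0)}$.

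\textbf{Main obstacle.} The only delicate point is the passage from uncorrelated to independent. This requires that all the variables sit in a single Gaussian Hilbert space, and that the circle averages, being limits of smooth mollifications, are $L^2$-limits of pairings $(h,\cdot)$. The continuous version of $(z,r)\mapsto h_r(z)$ from \cite[Proposition~3.1]{DS09} handles this, and the rest is the explicit computation in Step 1.
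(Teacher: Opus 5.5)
Your argument is correct. The paper gives no proof of this statement; it quotes it verbatim from \cite[Lemma~3.4]{ding2024critical}. What you have written is therefore a self-contained substitute for that citation, along the lines of the standard radial/lateral picture. The radial part of the field is the two-sided Brownian motion $s\mapsto h_{e^{-s}}(0)$. The lateral part, which has zero mean on every circle centered at $0$, is independent of it. The outer increments of the Brownian motion are independent of the inner ones.

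The individual steps all check out. The covariance $\frac12(|s|+|u|-|s-u|)$ for $s,u\in\RR$ gives independent increments for negative $t$ as well. The passage from ``outer increments independent of inner increments'' together with ``lateral part independent of the whole radial process'' to joint independence is also sound.

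Once you have Step 1, however, the splitting of $\phi$ and the Brownian motion structure are not needed. For $\rho\ge r$ and $|v|\le r$,
\[
g_\rho(|v|)-g_r(|v|)=\log\frac{\max\{\rho,1\}}{\rho}-\log\frac{\max\{r,1\}}{r}=g_\rho(r)-g_r(r),
\]
which does not depend on $v$ and equals $\Cov(h_\rho(0)-h_r(0),h_r(0))$. Hence, for $\phi\in C_0^\infty(B_r(0))$, the covariance of $h_\rho(0)-h_r(0)$ with $(h,\phi)-h_r(0)\int\phi$ is this constant times $\int\phi$ minus the same constant times $\int\phi$, i.e.\ zero. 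That is exactly the reduction you stated at the outset.

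Your longer route has one advantage. It describes $(h-h_r(0))|_{B_r(0)}$ explicitly as a functional of the inner increments and the lateral field, which is the form in which the lemma is typically used. The direct computation is the minimal proof.
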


\subsection{LQG metrics}
\label{subsec:lqg_metrics}

As discussed in Sections~\ref{subsec:overview}-\ref{subsec:main_result}, the GFF $h$ comes associated with a metric on $\CC$ and the latter is uniquely determined (modulo a deterministic multiplicative constant) via a list of axioms. Before stating the aforementioned axioms, we will give some preliminary definitions.

\begin{definition}
    Let $(X,d)$ be a metric space with $d$ allowed to take on infinite values.
    \begin{itemize}
        \item A \textbf{path} in $(X,d)$ is a continuous function $P : [a,b] \rightarrow X$ for some interval $[a,b]$.
        \item For a curve $P : [a,b] \rightarrow X$, the $d$-\textbf{length} of $P$ is defined by 
        \begin{equation*}
            \text{len}(P ; d):=\sup_T \sum_{i=1}^{|T|}d(P(t_i),P(t_{i-1}))
        \end{equation*}
        where the supremum is over all partitions $T : a=t_0 <t_1<\cdots<t_{|T|} = b$ of $[a,b]$. Note that the $d$-length of a curve can be infinite.
        \item We say that $(X,d)$ is a \textbf{length space} if for all $x,y \in X, \varepsilon>0$, there exists a path in $X$ with $d$-length at most $d(x,y)+\varepsilon$ from $x$ to $y$. If $d(x,y) < \infty$, a curve in $X$ from $x$ to $y$ with $d$-length exactly $d(x,y)$ is called a \textbf{geodesic}. We say that $(X,d)$ is a \textbf{geodesic space} if for all $x,y \in X$, we have that $d(x,y) < \infty$ and there exists a geodesic in $X$ from $x$ to $y$.
        \item For $Y \subseteq X$, the \textbf{internal metric} of $d$ on $Y$ is defined by 
        \begin{equation*}
            d(x,y ; Y):=\inf_{P \subseteq Y} \text{len}(P ; d), \quad \text{for all} \quad x,y \in Y
        \end{equation*}
        where the infimum is over all paths $P$ in $Y$ from $x$ to $y$. We note that $d(\cdot,\cdot;Y)$ is a metric on $Y$.
        \item If $X \subseteq \CC$, we say that $d$ is a \textbf{lower semicontinuous metric} if the function $(x,y) \rightarrow d(x,y)$ is lower semicontinuous with respect to the Euclidean topology. We equip the set of lower semicontinuous metrics on $X$ with the topology of lower semicontinuous functions on $X \times X$ induced by the convergence in Definition~\ref{def:lower_semi_cont}.
    \end{itemize}
\end{definition}

We are now ready to state the axioms of a (strong) LQG metric with parameter $\xi>0$.

\begin{definition}\label{def:strong_lqg_metric}
    Let $\mathcal{D}'(\CC)$ denote the space of distributions (generalized functions) on $\CC$, equipped with the standard weak topology. Fix $\xi>0$. A \textbf{(strong) LQG metric with parameter $\xi$} is a measurable function $h \rightarrow D_h$ from $\mathcal{D}'(\CC)$ to the space of lower semicontinuous metrics on $\CC$ such that the following hold whenever $h$ is a whole-plane GFF plus a continuous function on $\CC$.
    \begin{enumerate}[(I)]
        \item \label{it:length_space} 
        \textbf{Length space.} A.s., $(\CC,D_h)$ is a length space.
        \item \label{it:locality}
        \textbf{Locality.} Let $U \subseteq \CC$ be a deterministic open set. The $D_h$-internal metric $D_h(\cdot,\cdot ; U)$ is a.s. determined by $h|_U$.
        \item \label{it:weyl_scaling}
        \textbf{Weyl scaling.} For each continuous function $f : \CC \rightarrow \RR$, we define
        \begin{equation*}
            (e^{\xi f} \cdot D_h)(z,w):=\inf_{P : z \rightarrow w} \left\{\int_0^{\text{len}(P;D_h)} e^{\xi f(P(t))} dt \right\} \quad \text{for all} \quad z,w \in \CC,
        \end{equation*}
        where the infimum is over all $D_h$-rectifiable paths from $z$ to $w$ parameterized by $D_h$-length. Then, we have that $e^{\xi f} \cdot D_h = D_{h+f}$ for all continuous functions $f : \CC \rightarrow \RR$ a.s.
        \item \label{it:scale_translation_invariance}
        \textbf{Scale and translation invariance.} Let $Q$ be as in \eqref{eq:lfpp_normalizing_factor}. For each fixed and deterministic $r>0$ and $z \in \CC$, a.s.
        \begin{equation*}
            D_h(ru+z,rv+z) = D_{h(r \cdot +z)+Q \log r}(u,v) \quad \text{for all} \quad u,v \in \CC.
        \end{equation*}
        \item \label{it:finiteness}
        \textbf{Finiteness.} Let $U \subseteq \CC$ be a deterministic, open, connected set and let $K_1, K_2 \subseteq U$ be disjoint, deterministic, compact, connected sets which are not singletons. Almost surely, $D_h(K_1,K_2 ; U) < \infty$.
    \end{enumerate}
\end{definition}

The following theorem states that the limiting metric in Theorem~\ref{thm:uniqueness_supercritcal} is a (strong) LQG metric with parameter $\xi$ that is uniquely determined via the axioms in Definition~\ref{def:strong_lqg_metric}.

\begin{theorem}(\cite[Theorem~1.8]{ding2023uniqueness})
\label{thm:uniqueness_lqg_metrics}
Fix $\xi>0$. Then there is an LQG metric $D$ with parameter $\xi$ such that the limiting metric of Theorem~\ref{thm:uniqueness_supercritcal} is a.s. equal to $D_h$ whenever $h$ is a whole-plane GFF plus a bounded continuous function. Furthermore, this LQG metric is unique in the following sense. If $D$ and $\widetilde{D}$ are two LQG metrics with parameter $\xi$, then there exists a deterministic constant $C \in (0,\infty)$ such that if $h$ is a whole-plane GFF plus a continuous function, then a.s. $D_h = C \widetilde{D}_h$.  
\end{theorem}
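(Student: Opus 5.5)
The statement to be proved is Theorem~\ref{thm:uniqueness_lqg_metrics}, which is quoted from \cite[Theorem~1.8]{ding2023uniqueness}. In this paper it is used as an input, so the honest plan is to cite it; what follows only sketches how the proof in \cite{ding2023uniqueness} is organized.

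\textbf{Existence.} First I take the limit $D_h^{\xi}$ from Theorem~\ref{thm:uniqueness_supercritcal}, defined for $h$ a whole-plane GFF plus a bounded continuous function. I then check the axioms of Definition~\ref{def:strong_lqg_metric} one at a time, using the corresponding properties of LFPP.
\begin{itemize}
\item Locality comes from the fact that $h_\varepsilon^\star$ restricted to $U$ depends on $h$ only up to a Gaussian tail, which vanishes as $\varepsilon\to0$.
\item Weyl scaling holds because $h_\varepsilon^\star+f_\varepsilon^\star\to h^\star_\varepsilon+f$ uniformly on compacts.
\item Scale and translation invariance follows from $\alpha_{\varepsilon r}/\alpha_\varepsilon\to r^{1-\xi Q}$, which is \eqref{eq:lfpp_normalizing_factor} refined to an exact scaling limit.
\item The length-space property is preserved under lower semicontinuous limits of length metrics, via the convergence in Definition~\ref{def:lower_semi_cont}.
\item Finiteness follows from tightness of crossing distances.
\end{itemize}
To pass from bounded continuous $f$ to general continuous $f$, I define $D_{h+f}:=e^{\xi f}\cdot D_h$ and use locality.

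\textbf{Uniqueness.} Given two LQG metrics $D,\widetilde D$, I show that the optimal constants
\[
c_*=\inf\{D_h(z,w)/\widetilde D_h(z,w)\},\qquad C_*=\sup\{D_h(z,w)/\widetilde D_h(z,w)\}
\]
are deterministic, which follows from a zero-one law using locality and scaling. The following steps are needed.
\begin{enumerate}
\item \emph{Bi-Lipschitz equivalence.} I prove $c_*>0$ and $C_*<\infty$ by comparing $D$ and $\widetilde D$ in many independent annuli. This uses the tightness-across-scales estimates, namely the analogues of \eqref{eq:main_quantitative_estimate}, together with a percolation argument to string paths together.
\item \emph{Ruling out $c_*<C_*$.} Assuming $c_*<C_*$, I show that with positive probability, in a positive fraction of small annuli along a $D$-geodesic, one can find shortcuts. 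Inside these shortcuts $D\le c'\widetilde D$ for some $c'<C_*$.
\item \emph{Contradiction.} Summing the savings gives $D_h(z,w)\le (C_*-\delta)\widetilde D_h(z,w)$ with high probability, which contradicts the optimality of $C_*$.
\end{enumerate}

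\textbf{Main obstacle.} The supercritical case is the hard part, because the metric is not continuous and takes the value $\infty$ at singular points. Geodesics may therefore be unavailable, and the shortcut construction must avoid thick points. One way to handle this is to work with near-geodesics and internal metrics on annuli instead of global geodesics. A further requirement is that the independence-across-scales estimates hold without continuity, which again comes from locality.
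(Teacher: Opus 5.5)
As a proof, your proposal coincides with the paper's: Theorem~\ref{thm:uniqueness_lqg_metrics} is simply imported from \cite[Theorem~1.8]{ding2023uniqueness}. Your accompanying sketch, however, misdescribes the cited argument in places where the steps as written would fail.

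\emph{Existence.} The scale-invariance axiom cannot be obtained from $\alpha_{r\varepsilon}/\alpha_\varepsilon\to r^{1-\xi Q}$. The estimate \eqref{eq:lfpp_normalizing_factor} only gives $\alpha_\varepsilon=\varepsilon^{1-\xi Q+o(1)}$, which does not determine the limit of $\alpha_{r\varepsilon}/\alpha_\varepsilon$ for fixed $r$. To my knowledge that exact limit is not available independently of uniqueness. In \cite{ding2023tightness,ding2023uniqueness}, as in the subcritical case \cite{DFGPS20,GM21}, subsequential limits of LFPP are first shown to satisfy only a weaker set of axioms. In it, exact scale invariance is replaced by translation invariance plus tightness across scales. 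Uniqueness is then proved for such weak metrics, and both the convergence in Theorem~\ref{thm:uniqueness_supercritcal} and exact scale invariance are \emph{consequences} of it. Roughly, the rescaled metric $(u,v)\mapsto D_{h(\cdot/r)-Q\log r}(ru,rv)$ is again a weak metric, hence equals $c_rD_h$ with $c_r$ deterministic and multiplicative in $r$. So $c_r=r^{a}$, and the coarse asymptotic for $\alpha_\varepsilon$ forces $a=0$. Your plan to ``take the limit of Theorem~\ref{thm:uniqueness_supercritcal} and check the axioms'' therefore uses outputs of the cited theorem as inputs.

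\emph{Uniqueness.} To contradict the optimality of $C_*$ you need an upper bound on $D_h(z,w)$, so the shortcuts must lie along a $\widetilde D_h$-geodesic, not a $D_h$-geodesic. Take points $z=x_0,\dots,x_N=w$ on a $\widetilde D_h$-geodesic. Then $\widetilde D_h(z,w)=\sum_i\widetilde D_h(x_i,x_{i+1})$ and $D_h(z,w)\le\sum_iD_h(x_i,x_{i+1})$, so savings on good pieces give $D_h(z,w)\le(C_*-\delta)\widetilde D_h(z,w)$. Along a $D_h$-geodesic, the same bookkeeping only compares $D_h(z,w)$ with $\sum_i\widetilde D_h(x_i,x_{i+1})$. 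That sum is at least $\widetilde D_h(z,w)$ and a priori only bounded by $(C_*/c_*)\widetilde D_h(z,w)$, so no contradiction results. Even with the right geodesic, your step~2 passes over the real difficulty: the geodesic depends on $h$. A positive density of good annuli in space does not mean the geodesic uses them, and \cite{GM21,ding2023uniqueness} need a separate, delicate argument to force this. Finally, unavailability of geodesics is not the supercritical obstacle. The paper itself uses \cite[Lemma~1.16]{ding2023uniqueness} to join $0$ and $1$ by a $D_h^\xi$-geodesic in the proof of Lemma~\ref{lem:normalization_well_defined}.
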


\subsection{Re-normalized LQG metrics}
\label{subsec:lqg_metrics_renormalization}

As explained in Section~\ref{subsec:outline}, to prove Theorem~\ref{thm:main_result}, we will first re-normalize the metrics $D_h^{\xi}$ in Theorem~\ref{thm:uniqueness_supercritcal} in a different way. This will allow us to apply the results in \cite[Sections~7 and ~8]{kavvadias2025continuity}. Before defining explicitly the above re-normalization, we state two lemmas that we are going to use. For the rest of the subsection, we will assume that $h$ is a whole-plane GFF normalized so that $h_1(0) = 0$.

\begin{lemma}(\cite[Lemma~3.3]{kavvadias2025continuity})
\label{lem:high_prob}
For each $M>0$, there exists $\mathfrak{p}_0 \in (0,1)$ depending only on $M$ such that the following holds for all $\mathfrak{p} \in (\mathfrak{p}_0,1)$. Let $\{E_r(z)\}_{z \in \CC,r>0}$ be a collection of events such that for all $z \in \CC,r>0$, we have that $\mathbb{P}[E_r(z)] = \mathbb{P}[E_1(0)] \geq \mathfrak{p}$ and $E_r(z)$ is measurable with respect to the $\sigma$-algebra generated by 
\begin{equation*}
    (h-h_{3r}(z))|_{\mathbb{A}_{r,2r}(z)}.
\end{equation*}
Then, for all $\mathfrak{r}>0$, it holds with probability at least $1 - \varepsilon^M$ as $\varepsilon \to 0$, at a rate which is universal, that the following is true. For each $z \in B_{\mathfrak{r} \varepsilon^{-M}}(0)$, there exist 
\begin{equation*}
    r \in [\varepsilon^2 \mathfrak{r},\varepsilon \mathfrak{r}] \cap \{2^{-k} \mathfrak{r}\}_{k \in \NN}, w \in B_{\mathfrak{r} \varepsilon^{-M}}(0) \cap \left(\frac{\varepsilon^2 \mathfrak{r}}{4} \ZZ^2\right)
\end{equation*}
such that $E_r(w)$ occurs and $z \in B_{\mathfrak{r} \varepsilon^2 / 2}(w)$.
\end{lemma}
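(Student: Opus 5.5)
The plan is to reduce the statement to a bound for a single grid point, to get that bound from approximate independence of the GFF across well-separated concentric annuli, and then to take a union bound over the grid. By scale invariance of the law of $h$ modulo additive constants, and the fact that the events only depend on $h - h_{3r}(z)$, the probabilities below do not depend on $\mathfrak{r}$. So I would set $\mathfrak{r}=1$ and track only $\varepsilon$; this is also what makes the rate universal.

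\textbf{Step 1 (one grid point).} Fix $w \in B_{\varepsilon^{-M}}(0) \cap (\frac{\varepsilon^2}{4}\ZZ^2)$. I would work with the geometric subsequence of radii $r_j = 4^{-j}\varepsilon$ for $j = 0,1,\dots,K$, where $K \asymp \frac12 \log_2 \varepsilon^{-1}$ is chosen so that $r_K \geq \varepsilon^2$. Each $r_j$ lies in $[\varepsilon^2,\varepsilon]\cap\{2^{-k}\}_{k\in\NN}$. Moreover $3 r_{j+1} < r_j$, so the regions determining $E_{r_j}(w)$ are nested and disjoint: $\mathbb{A}_{r_j,2r_j}(w)$ together with the normalising circle $\partial B_{3r_j}(w)$ lie outside $B_{r_{j}}(w)$, while everything used at scale $j+1$ lies inside $\overline{B_{3r_{j+1}}(w)}\subset B_{r_j}(w)$. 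The aim is
\begin{equation*}
\mathbb{P}\Big[\bigcap_{j=0}^K E_{r_j}(w)^c\Big] \leq c_0\, e^{-b(\mathfrak{p})K}, \qquad b(\mathfrak{p}) \to \infty \text{ as } \mathfrak{p}\to 1,
\end{equation*}
with $c_0$ universal.

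\textbf{Step 2 (iteration across scales; the main obstacle).} This is the standard ``iterating events across concentric annuli'' argument in the style of Gwynne--Miller. Let $\mathcal{F}_j$ be generated by $h$ outside $B_{r_{j}}(w)$, viewed modulo additive constant. By the Markov property, given $\mathcal{F}_j$ the field inside $B_{r_j}(w)$ is a zero-boundary GFF plus the harmonic extension $\mathfrak{h}_j$. After subtracting $h_{3r_{j+1}}(w)$, the function $\mathfrak{h}_j$ restricted to $B_{3r_{j+1}}(w)$ is, with probability at least $1-\delta$, small in $C^1$ (relative to the scale). The same holds for the analogous correction coming from the zero-boundary part. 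On that good event, a Radon--Nikodym (Cameron--Martin) comparison gives
\[
\mathbb{P}[E_{r_{j+1}}(w)^c \mid \mathcal{F}_j] \leq \eta(\mathfrak{p}),
\]
where $\eta(\mathfrak{p})\to 0$ as $\mathfrak{p}\to1$, uniformly in the scale by scale invariance. The difficulty is that the good event may fail. I would handle this in the usual way. With one more factor of separation between the scales, the $C^1$ norms of the harmonic parts have Gaussian tails that are uniform in the conditioning. The number of ``bad'' scales then has exponential tails. Combining the two estimates, the number of scales $j$ at which $E_{r_j}(w)$ occurs stochastically dominates a binomial with success probability close to $1$, up to an exponentially small error. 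Making $\eta(\mathfrak{p})$ and $\delta$ small sends $b(\mathfrak{p})\to\infty$.

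\textbf{Step 3 (union bound).} For each $z\in B_{\varepsilon^{-M}}(0)$, the nearest grid point $w$ satisfies $|z-w|\leq \frac{\sqrt2}{8}\varepsilon^2 < \varepsilon^2/2$. Hence it suffices that the conclusion of Step 1 holds simultaneously for all grid points, of which there are $O(\varepsilon^{-2M-4})$. The failure probability is therefore at most
\[
O\big(\varepsilon^{-2M-4}\, e^{-b(\mathfrak{p}) \frac12 \log_2 \varepsilon^{-1}}\big).
\]
I would choose $\mathfrak{p}_0=\mathfrak{p}_0(M)$ so that $\frac{b(\mathfrak{p})}{2\ln 2} > 3M+4$ for all $\mathfrak{p}>\mathfrak{p}_0$. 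Then this is $o(\varepsilon^{M})$, at a rate depending only on the universal constants. This proves the lemma.
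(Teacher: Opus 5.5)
Your proposal is correct and follows the same route as the cited proof (the paper itself only quotes \cite[Lemma~3.3]{kavvadias2025continuity}). That route is: at each fixed grid point, apply the Gwynne--Miller iteration lemma for events on concentric annuli, with $s_1=1/3$, $s_2=2/3$ after normalizing by the radius $3r$; your ratio-$1/4$ scales satisfy its separation condition. This gives failure probability $O(\varepsilon^{a})$ with $a\to\infty$ as $\mathfrak{p}\to 1$, followed by a union bound over the $O(\varepsilon^{-2M-4})$ points of $(\frac{\varepsilon^2}{4}\ZZ^2)\cap B_{\varepsilon^{-M}}(0)$.

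Your Step~2 simply re-derives that lemma, with two small corrections. Since $3r_{j+1}/r_j=3/4$, the harmonic part only needs to be bounded by a large constant rather than small, and this is all the Cameron--Martin comparison uses. Also, you should take $r_j=4^{-j}2^{-\lceil\log_2\varepsilon^{-1}\rceil}$ so that the radii are genuinely of the form $2^{-k}$.
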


\begin{remark}
We note that \cite[Lemma~3.3]{kavvadias2025continuity} is stated in the case that $E_r(z)$ is the event that
\begin{equation*}
    \widetilde{D}_h^{\xi}(\mathrm{around}\,\, \mathbb{A}_{r,2r}(z)) \leq r^{\xi Q(\xi)} e^{\xi h_r(z)}
\end{equation*}
for fixed $z \in \mathbb{C},  r>0$, where $\widetilde{D}_h^{\xi}$ denotes an appropriate re-normalization of $D_h^{\xi}$. However, the proof of \cite[Lemma~3.3]{kavvadias2025continuity} only requires that $E_r(z)$ is measurable with respect to the $\sigma$-algebra $\sigma((h-h_{3r}(z))|_{\mathbb{A}_{r,2r}(z)})$ and that $\mathbb{P}[E_r(z)] = \mathbb{P}[E_1(0)] = \mathfrak{p}$ for each $z \in \mathbb{C},r>0$, and some constant $\mathfrak{p} \in (0,1)$ depending only on $M$.
\end{remark}

\begin{lemma}
\label{lem:normalization_well_defined}
Fix $\xi>0$. Then, for all $x>0$, we have that
\begin{equation*}
    \mathbb{P}[D_h^{\xi}(\text{around} \,\, \mathbb{A}_{1,2}(0)) = x] = \mathbb{P}[D_h^{\xi}(0,1) = x] = 0.
\end{equation*}   
Moreover, we have that
\begin{equation*}
    \mathbb{P}[D_h^{\xi}(0,1) \in (a,b)] > 0 \quad \text{for all} \quad 0<a<b.
\end{equation*}
\end{lemma}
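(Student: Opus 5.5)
The plan is to perturb the field by a smooth bump $t\,g$ and use Cameron--Martin absolute continuity. Along the one-parameter family $h+tg$ the relevant distance is a continuous, strictly monotone function of $t$, so no single value $x$ can carry positive mass.

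\textbf{Setup.} Fix $g\in C_0^\infty(\CC)$ with $0\le g\le 1$ whose support does not meet $\partial\DD$. Then $g_1(0)=0$, so $h+tg$ is still normalized by $(h+tg)_1(0)=0$. By the Cameron--Martin property of the whole-plane GFF (viewed modulo additive constants and then normalized), the law of $h+tg$ is mutually absolutely continuous with respect to the law of $h$, for every $t\in\RR$. By Weyl scaling (Axiom~\ref{it:weyl_scaling}),
\begin{equation*}
Z(t):=D^{\xi}_{h+tg}(0,1)=\inf_{P}\int_0^{\mathrm{len}(P;D_h^\xi)} e^{\xi t g(P(s))}\,ds ,
\end{equation*}
and analogously for $W(t):=D^{\xi}_{h+tg}(\text{around}\,\,\mathbb{A}_{1,2}(0))$, where the infimum runs over loops in $\mathbb{A}_{1,2}(0)$ disconnecting its boundaries. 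For fixed $\xi$, the points $0$ and $1$ are a.s.\ not singular, so $Z(t)<\infty$; this finiteness is the one input I would cite rather than reprove. Both maps $t\mapsto Z(t)$ and $t\mapsto W(t)$ are nondecreasing, and $e^{-\xi|t-s|}Z(s)\le Z(t)\le e^{\xi|t-s|}Z(s)$, so they are continuous.

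\textbf{Choice of $g$ and strict monotonicity.} For $Z$, take $g\equiv 1$ on $B_{\rho}(0)$ with $\operatorname{supp} g\subseteq B_{1/2}(0)$. Every path from $0$ to $1$ spends $D_h^\xi$-length at least $\delta:=D^{\xi}_h(0,\partial B_\rho(0))>0$ inside $\{g=1\}$. Hence for $s<t$, comparing integrands along a near-optimal path for $t$ gives
\begin{equation*}
Z(t)\ge Z(s)+(e^{\xi t}-e^{\xi s})\delta ,
\end{equation*}
so $Z$ is strictly increasing. For $W$, take $g\equiv1$ on a small neighbourhood $V$ of the segment $[6/5,9/5]$, with $\operatorname{supp}g\subseteq\mathbb{A}_{1,2}(0)$. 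Every loop around $\mathbb{A}_{1,2}(0)$ hits this segment, so it has $D_h^\xi$-length at least $D^{\xi}_h([6/5,9/5],\partial V)>0$ inside $V$. The same comparison shows $W$ is strictly increasing.

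\textbf{No atoms.} Suppose $\mathbb{P}[D^\xi_h(0,1)=x]>0$. By mutual absolute continuity, $\mathbb{P}[Z(t)=x]>0$ for every $t\in[0,1]$. Fubini then gives $\mathbb{E}\big[\mathrm{Leb}\{t\in[0,1]:Z(t)=x\}\big]>0$. But a strictly increasing function takes the value $x$ at most once, so this Lebesgue measure is a.s.\ $0$, a contradiction. The identical argument with $W$ handles $D_h^\xi(\text{around}\,\,\mathbb{A}_{1,2}(0))$.

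\textbf{Positivity of intervals.} Now choose $g\in C_0^\infty(\CC)$ with $0\le g\le 1$, $g\equiv1$ on a neighbourhood $V'$ of the segment $[0,1]$, and $g\equiv 0$ near $\partial\DD$ away from $V'$. Since $V'$ meets $\partial\DD$ near $1$, I would subtract a small compensating bump so that $g_1(0)=0$ still holds; this is a technical point to arrange. Then:
\begin{itemize}
\item $Z(t)\le e^{\xi t}D^\xi_h(0,1;V')\to0$ as $t\to-\infty$, using that $D^\xi_h(0,1;V')<\infty$ a.s.;
\item $Z(t)\ge e^{\xi t}D^\xi_h(0,\partial B_\rho(0))\to\infty$ as $t\to\infty$.
\end{itemize}
By continuity, for a.e.\ $\omega$ the set $\{t:Z(t)\in(a,b)\}$ is a nonempty open set. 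So it has positive Lebesgue measure, and by Fubini $\mathbb{P}[Z(t)\in(a,b)]>0$ for some $t$. Absolute continuity then transfers this to $\mathbb{P}[D^\xi_h(0,1)\in(a,b)]>0$.

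\textbf{Main obstacle.} I expect the delicate points to be the finiteness of $D^\xi_h(0,1;V')$ in the supercritical regime $\xi>\xi_c$, and the exact Cameron--Martin bookkeeping with the normalization $h_1(0)=0$ (the need for $g_1(0)=0$). I would handle the first by invoking that fixed points are a.s.\ non-singular, together with the finiteness axiom applied to small compact sets around $0$ and $1$.
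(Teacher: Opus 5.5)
There is a genuine gap: the no-atom claim for $D_h^{\xi}(\text{around}\,\,\mathbb{A}_{1,2}(0))$ is not established. Your argument for $D_h^{\xi}(0,1)$ is fine.

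The assertion that every loop around $\mathbb{A}_{1,2}(0)$ hits $[6/5,9/5]$ is false. For example, $\partial B_{11/10}(0)$ is a disconnecting loop in the annulus that avoids any small neighbourhood $V$ of that segment. This is not cosmetic. Take $g\ge 0$ supported in a compact subset of the open annulus. Disconnecting loops that avoid $\operatorname{supp} g$ have the same length for every $t$, so for $t\ge 0$
$W(0)\le W(t)\le \inf\{\mathrm{len}(P;D_h^{\xi}) : P \text{ disconnecting, } P\cap \operatorname{supp} g=\emptyset\}$.
On the event that this infimum equals $W(0)$, which nothing excludes, $W$ is constant on $[0,\infty)$. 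Your Fubini/level-set argument then gives no contradiction.

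The natural repair runs into exactly the normalization issue you flagged.
\begin{itemize}
\item Any set met by every disconnecting loop of $\mathbb{A}_{1,2}(0)$ must reach the inner boundary $\partial\DD$. That is precisely the circle carrying the normalization $h_1(0)=0$.
\item If $g\ge 0$ equals $1$ on such a set, then $g>0$ somewhere on $\partial\DD$. Hence $g_1(0)>0$, and the law of $h+tg$ is singular with respect to that of $h$.
\item Replacing $g$ by $g-g_1(0)$ restores the normalization. But by Weyl scaling the family becomes $e^{-\xi t g_1(0)}W(t)$, which is no longer monotone.
\item Keeping $g\ge 0$ with $g_1(0)=0$ forces $g\equiv 0$ on $\partial\DD$. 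Near-optimal loops running close to $\partial\DD$ then feel an arbitrarily small change, so there is no uniform lower bound of the type $D_h^{\xi}([6/5,9/5],\partial V)$.
\end{itemize}
Closing the gap needs a genuinely new input, for example a proof that near-optimal disconnecting loops cannot collapse onto $\partial\DD$, or a different argument altogether. The paper does not prove this part; it cites \cite[Lemma~10.1]{kavvadias2025continuity} for the whole first claim.

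The remaining parts are sound. The strict-monotonicity argument for $Z$ with $\operatorname{supp} g\subseteq B_{1/2}(0)$ works. Your positivity argument (intermediate value theorem in $t$, Fubini, Cameron--Martin) is a valid alternative to the paper's route. The paper instead picks a point in the support of the law of $D_h^{\xi}(0,1)$, confines a geodesic to $B_n(0)$, applies a constant shift $c$ on a large ball $B_N(0)$, and then uses locality and absolute continuity. Your version needs neither the support point nor the geodesic confinement. Your bookkeeping $g_1(0)=0$ also matters: a shift that is constant on a ball containing $\partial\DD$ changes $h_1(0)$, so the absolute-continuity step requires exactly the care you take.
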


\begin{proof}
The first claim of the lemma follows from \cite[Lemma~10.1]{kavvadias2025continuity}. For the rest of the proof, we will focus on proving the second claim of the lemma.

Note that $D_h^{\xi}(0,1) \in (0,\infty)$ a.s. First, we show that there exists $x \in (0,\infty)$ such that
\begin{equation}\label{eq:support_of_measure}
\mathbb{P}[D_h^{\xi}(0,1) \in (x-\varepsilon,x+\varepsilon) ] > 0 \quad \text{for all} \quad \varepsilon > 0.   
\end{equation}
Suppose that this is not the case. Then, for all $x>0$, there exists $\varepsilon_x \in (0,x)$ such that
\begin{equation*}
\mathbb{P}[D_h^{\xi}(0,1) \in (x-\varepsilon_x,x+\varepsilon_x)] = 0.
\end{equation*}
Note that the family of intervals $\{(x-\varepsilon_x,x+\varepsilon_x)\}_{x>0}$ is an open cover of $(0,\infty)$. Since $(0,\infty)$ is second countable when endowed with the Euclidean topology, we obtain that there exists a sequence of points $(x_n)_{n \geq 1}$ such that $(0,\infty) = \cup_{n \in \mathbb{N}} (x_n - \varepsilon_{x_n} , x_n + \varepsilon_{x_n})$. But then, we have that
\begin{equation*}
\mathbb{P}[D_h^{\xi}(0,1) \in (0,\infty)] \leq \sum_{n \in \mathbb{N}} \mathbb{P}[D_h^{\xi}(0,1) \in (x_n - \varepsilon_{x_n} , x_n + \varepsilon_{x_n})] = 0   
\end{equation*}
but that is a contradiction. It follows that there exists $x>0$ such that \eqref{eq:support_of_measure} holds.

Next, we fix $y,\varepsilon>0$ and let $c \in \mathbb{R}$ be such that $y = x e^{\xi c}$. Note that \cite[Lemma~1.16]{ding2023uniqueness} implies that it is a.s. the case that the points $0$ and $1$ can be joined by a $D_h^{\xi}$-geodesic $P$. Moreover, we have by \cite[Proposition~5.19]{ding2023tightness} that a.s. for all $n>0$,
\begin{equation*}
D_h^{\xi}(\partial B_n(0) , \partial B_N(0)) \to \infty \quad \text{as} \quad N \to \infty.   
\end{equation*}
Thus, combining with \eqref{eq:support_of_measure}, we obtain that we can choose $1<n<N$ such that with positive probability, we have that
\begin{align}\label{eq:all_conditions}
D_h^{\xi}(0,1) \in (x-\varepsilon e^{-\xi c} , x+\varepsilon e^{-\xi c}), P \subseteq B_n(0), \,\,\text{and} \,\,  D_h^{\xi}(\partial B_n(0),\partial B_N(0)) > e^{3 \xi |c|} D_h^{\xi}(0,1).   
\end{align}

Let $f: \mathbb{C} \rightarrow \mathbb{R}$ be a smooth and compactly supported function such that $f \equiv c$ on $B_N(0)$, $||f||_{\infty} \leq 2 |c|$ and $f\equiv 0$ on $\mathbb{C} \setminus B_{2N}(0)$. Then, it follows from combining \eqref{eq:all_conditions} with Axiom~\eqref{it:weyl_scaling} that 
\begin{equation*}
D_{h+f}^{\xi}(\partial B_n(0) , \partial Be_N(0)) \geq e^{-\xi ||f||_{\infty}} D_h^{\xi}(\partial B_n(0) , \partial B_N(0)) > e^{\xi c} D_h^{\xi}(0,1).   
\end{equation*}
Hence, it follows from Axiom~\eqref{it:weyl_scaling} that if the event in \eqref{eq:all_conditions} occurs, we have that
\begin{equation*}
D_{h+f}^{\xi}(0,1) = D_{h+f}^{\xi}(0,1 ; B_N(0)) = e^{\xi c} D_h^{\xi}(0,1),   
\end{equation*}
which implies that
\begin{equation}\label{eq:positive_prob}
\mathbb{P}[D_{h+f}^{\xi}(0,1 ; B_N(0)) \in (y-\varepsilon,y+\varepsilon), D_{h+f}^{\xi}(\partial B_n(0) , \partial B_N(0)) > D_{h+f}^{\xi}(0,1 ; B_N(0))] > 0.    
\end{equation}

Note that by Axiom~\eqref{it:locality}, the event in \eqref{eq:positive_prob} is a.s. determined by $(h+f)|_{B_N(0)}$. Note also that by \cite[Proposition~2.9]{IG4}, the laws of the fields $h|_{B_N(0)}$ and $(h+f)|_{B_N(0)}$ are mutually absolutely continuous and the corresponding Radon-Nikodym derivatives are positive a.s. It follows that the event in \eqref{eq:positive_prob} defined with $h$ in place of $h+f$ has positive probability which implies that
\begin{equation*}
\mathbb{P}[D_h^{\xi}(0,1) \in (y-\varepsilon , y + \varepsilon)] > 0.   
\end{equation*}
This completes the proof of the lemma.    
\end{proof}

Now, we are ready to define explicitly the re-normalization that we are going to use. Set $\zeta(\xi) = Q(\xi) / 4$ for all $\xi>0$. We also let $b : (0,\infty) \rightarrow (0,\infty)$ be a continuous function and let $M : (0,\infty) \rightarrow (0,\infty)$ be a continuous and non-decreasing function such that
\begin{equation}\label{eq:condition_for_normalization_1}
    b(\xi) < \frac{1}{2\sqrt{\xi}}
\end{equation}
and
\begin{equation}\label{eq:condition_for_nprmalization_2}
    \frac{4-\xi Q(\xi) + \xi 2 \sqrt{2} \sqrt{2 + M(\xi)}}{\sqrt{M(\xi)}} < \frac{1}{b(\xi)},\,\, \xi \zeta(\xi) b(\xi) \sqrt{M(\xi)} > 1,\,\,\text{and}\,\, b(\xi) \sqrt{M(\xi)} > 1
\end{equation}
for all $\xi>0$.

For all $\xi>0$, we let $\mathfrak{p}_0(\xi) \in (0,1)$ be the minimum number in $(0,1)$ such that the statement of Lemma~\ref{lem:high_prob} holds with $M = M(\xi)$. We then define $\alpha(\xi) \in (0,\infty)$ by
\begin{equation}\label{eq:auxiliary_normalization}
\alpha(\xi):=\inf\left\{l>0 : \mathbb{P}[D_h^{\xi}(\text{around}\,\, \mathbb{A}_{1,2}(0)) \leq l] > \mathfrak{p}_0(\xi) \right\}.
\end{equation}
Note that Lemma~\ref{lem:normalization_well_defined} implies that
\begin{equation*}
    \mathbb{P}[D_h^{\xi}(\text{around}\,\,\mathbb{A}_{1,2}(0)) \leq \alpha(\xi)] = \mathfrak{p}_0(\xi)
\end{equation*}
for all $\xi>0$. We then set
\begin{equation}\label{eq:auxiliary_metric_normalization}
\widetilde{D}_h^{\xi}:=\alpha(\xi)^{-1} D_h^{\xi}.   
\end{equation}

Next, we state a useful estimate from \cite{kavvadias2025continuity} (which is uniform in $\xi$) on the $\widetilde{D}_h^{\xi}$-distance between two fixed, compact, connected, disjoint sets which are not singletons. It will be the main input for the proof of \eqref{eq:main_quantitative_estimate}. Recall that the latter is one of the two key estimates for the proof of Proposition~\ref{prop:quantitative_estimate}.

\begin{proposition}(\cite[Proposition~7.1]{kavvadias2025continuity})
\label{prop:main_lqg_estimate}
Suppose that we have the same setup described above. Let $U \subseteq \CC$ be open and connected and let $K_1,K_2 \subseteq U$ be connected , disjoint compact sets which are not singletons. Then, for all $r>0$, it holds with probability at least $1-O_A(A^{-b(\xi) \sqrt{M(\xi)}})$ as $A \to \infty$, at a rate which is uniform in the choice of $r$ and $\xi$ and depends only on $K_1,K_2$, and $U$, that
\begin{equation}\label{eq:lqg_bound}
    \widetilde{D}_h^{\xi}(r K_1,rK_2 ; r U) \leq A r^{\xi Q(\xi)} e^{\xi h_r(0)}.
\end{equation}  
More precisely, there exist constants $c,A_0>0$ depending only on $K_1,K_2$, and $U$, such that for all $A \geq A_0$, we have that \eqref{eq:lqg_bound} holds with probability at least $1-C A^{-b(\xi) \sqrt{M(\xi)}}$.
\end{proposition}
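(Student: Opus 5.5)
By Axiom~\eqref{it:scale_translation_invariance} and Axiom~\eqref{it:weyl_scaling} applied with the constant function $-h_r(0)$, we have a.s.
\begin{equation*}
\widetilde{D}_h^{\xi}(rK_1,rK_2;rU) = r^{\xi Q(\xi)} e^{\xi h_r(0)}\, \widetilde{D}_{h(r\cdot)-h_r(0)}^{\xi}(K_1,K_2;U).
\end{equation*}
Since $h(r\cdot)-h_r(0)\overset{d}{=}h$, it suffices to treat $r=1$. In that case we need to show $\mathbb{P}[\widetilde{D}_h^{\xi}(K_1,K_2;U)>A]\le CA^{-b(\xi)\sqrt{M(\xi)}}$, with constants depending only on $K_1,K_2,U$. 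The proof should be a chaining argument built on Lemma~\ref{lem:high_prob}. The normalization $\alpha(\xi)$ was chosen exactly so that the ``good annulus'' events have probability $\mathfrak{p}_0(\xi)$. This makes all constants independent of $\xi$, except through $M(\xi)$ and $Q(\xi)$, which appear only in exponents.

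\textbf{Step 1 (good annuli everywhere).} For $z\in\CC$ and $r>0$, let $E_r(z)$ be the event that $\widetilde{D}_h^{\xi}(\text{around}\,\,\mathbb{A}_{r,2r}(z))\le r^{\xi Q(\xi)}e^{\xi h_r(z)}$. By Axioms~\eqref{it:locality}, \eqref{it:weyl_scaling} and \eqref{it:scale_translation_invariance}, $E_r(z)$ is determined by $(h-h_{3r}(z))|_{\mathbb{A}_{r,2r}(z)}$; the ratio $e^{\xi h_r(z)}/e^{\xi h_{3r}(z)}$ is itself a function of this restriction. Also $\mathbb{P}[E_r(z)]=\mathbb{P}[E_1(0)]=\mathfrak{p}_0(\xi)$ by the definition of $\alpha(\xi)$ and Lemma~\ref{lem:normalization_well_defined}. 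I would perturb the threshold slightly if needed to land in $(\mathfrak{p}_0,1)$. Applying Lemma~\ref{lem:high_prob} with $M=M(\xi)$ and a fixed $\mathfrak{r}$, with probability $\ge 1-\varepsilon^{M(\xi)}$ every point of a neighborhood of $\overline U$ lies within $\varepsilon^2\mathfrak{r}/2$ of the center $w$ of a good annulus $\mathbb{A}_{\rho,2\rho}(w)$ with $\rho\in[\varepsilon^2\mathfrak{r},\varepsilon\mathfrak{r}]$.

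\textbf{Step 2 (chaining).} Fix a deterministic Euclidean path in $U$ from $K_1$ to $K_2$, at positive distance from $\partial U$. Cover it by $O(\varepsilon^{-2})$ points spaced $\varepsilon^2\mathfrak{r}/4$ apart. The good loops around consecutive points intersect, because each loop surrounds a ball containing its point and has radius at least $\varepsilon^2\mathfrak{r}$. The first loop hits $K_1$ and the last hits $K_2$, since $K_i$ is connected and not a singleton; this holds once $\varepsilon$ is small compared with $\operatorname{diam}K_i$. Concatenating these loops gives a path in $U$ from $K_1$ to $K_2$ of $\widetilde{D}_h^{\xi}$-length at most
\begin{equation*}
C\varepsilon^{-2}\sup_{w,\rho}\rho^{\xi Q}e^{\xi h_\rho(w)}.
\end{equation*}

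\textbf{Step 3 (circle-average maximum).} A standard Gaussian union bound over the $O(\varepsilon^{-4})$ grid points and dyadic radii, combined with continuity of $(z,r)\mapsto h_r(z)$, gives the following. Off an event of probability $O(\varepsilon^{M(\xi)})$,
\begin{equation*}
\sup_{w,\rho}|h_\rho(w)|\le 2\sqrt2\sqrt{2+M(\xi)}\,\log\varepsilon^{-1}+O(1).
\end{equation*}
Here the variance is about $\log\varepsilon^{-2}$, and the count $\varepsilon^{-4}$ together with the target $\varepsilon^{M}$ dictate the constant. Combining with Step~2 (using $\rho^{\xi Q}\le \mathfrak{r}^{\xi Q}$ for $\rho\le\varepsilon\mathfrak{r}\le 1$), the crossing distance is, off an event of probability $O(\varepsilon^{M(\xi)})$, at most
\begin{equation*}
C\varepsilon^{-\left(4-\xi Q(\xi)+\xi 2\sqrt2\sqrt{2+M(\xi)}\right)}.
\end{equation*}
The bookkeeping of the power of $\varepsilon$ may need care to get exactly the exponent in \eqref{eq:condition_for_nprmalization_2}.

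\textbf{Step 4 (converting to $A$).} Define $\varepsilon$ by $A=\varepsilon^{-\kappa}$, where $\kappa$ denotes the exponent $4-\xi Q+\xi2\sqrt2\sqrt{2+M}$ from Step~3. The failure probability is then $O(A^{-M/\kappa})$. The first condition in \eqref{eq:condition_for_nprmalization_2} says exactly $\kappa/\sqrt M<1/b$, that is, $M/\kappa> b\sqrt M$. This yields the bound $CA^{-b(\xi)\sqrt{M(\xi)}}$.

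\textbf{Main obstacle.} The delicate point is uniformity in $\xi$: the rate in Lemma~\ref{lem:high_prob} is universal, but one must check that no hidden constant depends on $\xi$. The lower bound $\varepsilon\le\varepsilon_0(K_1,K_2,U)$ needed for the chaining should be absorbed into $A_0$.
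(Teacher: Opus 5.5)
Your scheme is the right one, and it matches what the paper's normalization encodes. The paper imports this proposition from \cite[Proposition~7.1]{kavvadias2025continuity} without proof. The ingredients are: the scaling reduction to $r=1$; good-annulus events of probability $\mathfrak p_0(\xi)$ fed into Lemma~\ref{lem:high_prob} with $M=M(\xi)$; a Gaussian union bound for $\max|h_\rho(w)|$; and the substitution $A=\varepsilon^{-\kappa}$. The first condition in \eqref{eq:condition_for_nprmalization_2} is exactly your inequality $M/\kappa>b\sqrt M$. The $4$ in $\kappa$ is consistent with bounding the number of loops crudely by the $O(\varepsilon^{-4})$ grid points, each loop having length at most $(\varepsilon\mathfrak r)^{\xi Q}e^{\xi\max|h|}$.

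However, Step~2 rests on a false claim: consecutive good loops need not intersect. Lemma~\ref{lem:high_prob} gives no control over which radius in $[\varepsilon^2\mathfrak r,\varepsilon\mathfrak r]$ is used at neighbouring points. Suppose $z_i$ receives $(\rho,w)$ with $\rho=\varepsilon\mathfrak r$ and $z_{i+1}$ receives $(\rho',w')$ with $\rho'=\varepsilon^2\mathfrak r$. Then $|w-w'|<2\varepsilon^2\mathfrak r$, so the small loop lies in $B_{4\varepsilon^2\mathfrak r}(w)\subset B_{\rho}(w)$ once $\varepsilon<1/4$. It sits strictly inside the inner disk of the big annulus, the two loops are disjoint, and the concatenation breaks. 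What your argument does show is that consecutive \emph{filled} loops overlap. Write $F_i$ for the union of $\ell_i$ with the bounded components of $\CC\setminus\ell_i$; then $F_i\supseteq B_{\rho_i}(w_i)$, and both $F_i$ and $F_{i+1}$ contain $z_i$ and $z_{i+1}$.

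The repair is a short topological argument.
\begin{itemize}
\item If two loops are disjoint, their fills are either disjoint or nested.
\item Keep only the maximal loops, i.e.\ those whose fill is not strictly contained in another $F_j$. Two maximal loops with intersecting fills must then intersect, since equal fills share their boundary, which lies on both loops.
\item Every $F_i$ lies in a maximal fill, and $\bigcup_i F_i$ is connected. Hence the intersection graph of the maximal fills is connected, and so is that of the maximal loops.
\item The union of the maximal loops is therefore path-connected. Any two of its points are joined by a path of $\widetilde D_h^{\xi}$-length at most $\sum_i \mathrm{len}(\ell_i;\widetilde D_h^{\xi})$.
\item The maximal fill containing $z_0\in K_1$ has diameter at most $4\varepsilon\mathfrak r<\mathrm{diam}(K_1)$. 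Since $K_1$ is connected, it meets the corresponding loop, and likewise for $K_2$. All loops stay in $U$ for $\varepsilon$ small.
\end{itemize}
With this fix the rest of your argument goes through.

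One caveat on your uniformity remark. Absorbing $\varepsilon\le\varepsilon_0$ (and the rate in Lemma~\ref{lem:high_prob}) into $A_0$ through $A=\varepsilon^{-\kappa(\xi)}$ gives $A_0=\varepsilon_0^{-\kappa(\xi)}$. This is uniform only over $\xi$-ranges on which $\kappa(\xi)$ stays bounded, e.g.\ compact ones, which is all this paper uses.
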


\section{Proof of the main result}
\label{sec:proof_of_main_result}

In this section, we are going to prove Theorem~\ref{thm:main_result} and Corollary~\ref{cor:main_result}. For the rest of the section, we will assume that $h$ is a whole-plane GFF normalized such that $h_1(0) = 0$. Moreover, for all $\xi>0$, we will denote by $\widetilde{D}_h^{\xi}$ the re-normalization of the metric $D_h^{\xi}$ introduced in \eqref{eq:auxiliary_metric_normalization} for some fixed functions $b,M$ satisfying \eqref{eq:condition_for_normalization_1} and \eqref{eq:condition_for_nprmalization_2}. In Sections~\ref{subsec:estimates_across_around_annuli}-\ref{subsec:proofs_of_main_results}, we will require that the functions $b,M$ satisfy some further properties which we are going to make clear in the relevant lemmas and propositions.

Finally, most of the estimates in Sections~\ref{subsec:estimates_across_around_annuli}-\ref{subsec:proofs_of_main_results} will be obtained for the metrics $\widetilde{D}_h^{\xi}$ instead of the metrics $\widehat{D}_h^{\xi}$ appearing in the statement of Theorem~\ref{thm:main_result}. The reason for this is that this will allow us to use the crucial estimate obtained in Proposition~\ref{prop:main_lqg_estimate} which is stated in terms of the metrics $\widetilde{D}_h^{\xi}$.

\subsection{Estimate of distances across and around annuli in terms of circle averages}
\label{subsec:estimates_across_around_annuli}

As explained in Section~\ref{subsec:outline}, to get \eqref{eq:sum_of_distances}, we will string together paths in annuli of the form $\mathbb{A}_{e^{-t-100},e^{-t}}(z)$ that cross the annuli and paths in annuli of the form $\mathbb{A}_{e^{-t-51/100},e^{-t-1/2}}(z)$ that disconnect the inner and outer boundaries of the annuli. To do this, we will bound $\widetilde{D}_h^{\xi}(\text{around} \,\, \mathbb{A}_{e^{-t-51/100},e^{-t-1/2}}(z))$ and $\widetilde{D}_h^{\xi}(\text{across}\,\, \mathbb{A}_{e^{-t-100},e^{-t}}(z))$ in terms of the circle average process associated with $h$.

In order to state the above estimates more conveniently and help readability, we introduce the following definition.

\begin{definition}\label{def:main_quantities}
For $\xi>0, z \in \CC$ and $t\geq 0$, we let $M_{e^{-t}}(z)$ be the maximum of the following quantities:
\begin{enumerate}
    \item \label{it:distance_around}
    $\exp(-\xi h_{e^{-t}}(z) + \xi Q(\xi) t) \widetilde{D}_h^{\xi}(\text{around}\,\, \mathbb{A}_{e^{-t-51/100},e^{-t-1/2}}(z))$;
    \item \label{it:distance_acroos}
    $\exp(-\xi h_{e^{-t}}(z) + \xi Q(\xi) t) \widetilde{D}_h^{\xi}(\text{across} \,\, \mathbb{A}_{e^{-t-100},e^{-t}}(z))$;
    \item \label{it:circle_average_not_large}
    $\sup_{r \in [e^{-t-100},e^{-t}]}\sup_{w \in B_{e^{-t}-r}(z)} \exp(|h_r(w)-h_{e^{-t}}(z)|)$.
\end{enumerate}
\end{definition}

The main goal of this subsection is to prove the following estimate for $M_{e^{-t}}(z)$ in terms of the circle average process associated with $h$.

\begin{lemma}\label{lem:main_estimate}
Fix $\varepsilon \in (0,1), A \in (0,\infty)$ and suppose that $b(\xi) \sqrt{M(\xi)} > 20 A$, and let $U \subseteq \CC$ be a bounded open set. Then, there exists a constant $C \in (0,\infty)$ depending only on $\varepsilon,A$, and $U$ such that the following holds with probability at least $1-\varepsilon$. For all $n \in \NN$ and all $z \in (e^{-n-100} \ZZ^2) \cap U$, we have that
\begin{equation*}
    M_{e^{-n}}(z) \leq C \exp\left(\frac{2n - h_{e^{-n}}(z)}{A}\right).
\end{equation*}    
\end{lemma}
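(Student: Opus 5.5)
Our plan is to obtain, for a single $z$ and $n$, a tail bound for $M_{e^{-n}}(z)$ that does not depend on the circle average $h_{e^{-n}}(z)$. We will then take a union bound over the grid, using that the allowed threshold $C\exp((2n-h_{e^{-n}}(z))/A)$ is large except at the rare points where $h_{e^{-n}}(z)$ is close to $2n$.

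\textbf{Step 1 (tails of the three quantities).} Fix $z$ and $t=n$. By Axiom~\eqref{it:scale_translation_invariance} and translation invariance of $h$ modulo additive constants, each of items \eqref{it:distance_around} and \eqref{it:distance_acroos} in Definition~\ref{def:main_quantities} has the same law as the corresponding quantity for $z=0$, $r=e^{-n}$.
\begin{itemize}
\item We apply Proposition~\ref{prop:main_lqg_estimate} with $r=e^{-n}$ to the around-distance. Take $K_1,K_2$ to be two arcs of the circles of radii $e^{-1/2}$ and $e^{-51/100}$, and $U$ a neighbourhood of the closed annulus; a standard covering by finitely many such crossings gives a loop. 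For the across-distance, take $K_1=\partial B_{e^{-100}}$, $K_2=\partial \DD$ (or small arcs of them).
\item This gives $\mathbb{P}[\text{item }\eqref{it:distance_around}\vee\text{item }\eqref{it:distance_acroos}>S]\le C S^{-b(\xi)\sqrt{M(\xi)}}\le CS^{-20A}$, uniformly in $\xi$ and $n$.
\item For item \eqref{it:circle_average_not_large}, $h_r(w)-h_{e^{-n}}(z)$ over this range has the law of a scale-$1$ field by scale invariance. Standard Gaussian estimates (Kolmogorov continuity plus Borell--TIS for the continuous circle-average process on a compact parameter set) give $\mathbb{P}[\text{item }\eqref{it:circle_average_not_large}>S]\le C e^{-c(\log S)^2}$, which is better than any power.
\end{itemize}

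\textbf{Step 2 (decoupling from $h_{e^{-n}}(z)$).} The three normalized quantities depend only on $(h-h_{e^{-n}}(z))|_{B_{e^{-n}}(z)}$: each is unchanged when a constant is added to $h$, by Weyl scaling and Axiom~\eqref{it:locality}. For the across-distance we need locality, and should use the internal metric on $B_{e^{-n}}(z)$, which is at least as large. By Proposition~\ref{prop:conditional_independence} (translated to $z$), $h_{e^{-n}}(z)-h_1(z)$ is independent of them. The offset $h_1(z)$ is a bounded-variance Gaussian for $z\in U$, so conditioning on $h_{e^{-n}}(z)$ costs only a harmless Gaussian factor. We therefore get
\[
\mathbb{P}\bigl[M_{e^{-n}}(z)>S \,\big|\, h_{e^{-n}}(z)\bigr]\le C S^{-20A},
\]
up to controlling $h_1(z)$ on a high-probability event that is uniform over $U$.

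\textbf{Step 3 (union bound).} Set $S=C_0\exp((2n-h_{e^{-n}}(z))/A)$. Then the failure probability at a given $z$ is bounded by
\[
C C_0^{-20A}\,\mathbb{E}\bigl[e^{-20(2n-h_{e^{-n}}(z))}\bigr].
\]
By Proposition~\ref{prop:circle_average_bound}, on an event of probability at least $1-\varepsilon/2$ we have $h_{e^{-n}}(z)\le 2n+C_1$ for all grid points. So we may replace the expectation by one computed on $\{h_{e^{-n}}(z)\le 2n+C_1\}$. Since $h_{e^{-n}}(z)$ is Gaussian with variance $n+O(1)$, we get
\[
\mathbb{E}\bigl[e^{20(h-2n)}\mathbf 1_{h\le 2n+C_1}\bigr]\lesssim \int_{-\infty}^{C_1} e^{20x}e^{-(2n+x)^2/(2n)}\,dx\lesssim e^{-2n}.
\]
The grid has $\asymp e^{2n}$ points, which exactly cancels this factor and leaves a summable bound after tuning the exponents. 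Concretely, we use $20$ rather than $2$ and sharpen $C_1$ using the $-\alpha\log n$ correction in Proposition~\ref{prop:circle_average_bound}, which contributes $e^{-20\alpha\log n}=n^{-20\alpha}$. This makes the sum over $n$ finite, so the total is at most $\varepsilon/2$ once $C_0$ is large.

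The main obstacle is exactly this bookkeeping: making the Gaussian tilt $e^{20(h-2n)}$ and the $e^{2n}$ entropy of the grid balance, with enough slack to sum over $n$. This is where the hypothesis $b\sqrt M>20A$ and the logarithmic correction in Proposition~\ref{prop:circle_average_bound} are needed. The measurability and independence in Step~2, for the across-distance in particular, must also be checked carefully via the internal metric.
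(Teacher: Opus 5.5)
Your argument is essentially the paper's. It combines a $\xi$-uniform tail bound for $M_{e^{-n}}(z)$ from Proposition~\ref{prop:main_lqg_estimate} and the circle-average sup estimate with independence from the circle-average increment via Proposition~\ref{prop:conditional_independence}. It then takes a union bound over the $\asymp e^{2n}$ grid points against the Gaussian tail, and uses the $-\alpha\log n$ correction of Proposition~\ref{prop:circle_average_bound} to make the sum over $n$ finite. Integrating against the Gaussian density, instead of slicing $2n-(h_{e^{-n}}(z)-h_1(z))$ into intervals $[k,k+1]$ and treating small $n$ separately as the paper does, is only cosmetic.

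The one step to correct is Step~2. What is independent of $M_{e^{-n}}(z)$ is $X:=h_{e^{-n}}(z)-h_1(z)$, not $h_{e^{-n}}(z)$ itself. Under the normalization $h_1(0)=0$, the field in $B_{e^{-n}}(z)$ can be correlated with $h_1(z)-h_1(0)$. So instead of appealing to a ``harmless Gaussian factor,'' condition on $X$, and on $\{\sup_U|h_1|\le C_2\}$ lower the threshold to $C_0e^{-C_2/A}e^{(2n-X)/A}$, exactly as the paper does.

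With this fix, the Gaussian integral gives a per-level bound of order $n^{-1/2-18\alpha}$, which is summable. The exponent is $18\alpha$ rather than your $20\alpha$, because the Gaussian density contributes a factor $e^{-2x}$.
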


We note that the statement of Lemma~\ref{lem:main_estimate} is similar to that of \cite[Lemma~3.2]{ding2024critical} but with the term $|2n-h_{e^{-n}}(z)|^{1/2+\zeta}$ for some $\zeta>0$ replaced by the term $(2n-h_{e^{-n}}(z)) / A$. We choose to state Lemma~\ref{lem:main_estimate} in this way since it will make it easier to verify that the required estimates are uniform in $\xi>0$. However, the overall strategy for the proof of Lemma~\ref{lem:main_estimate} will be similar to that of \cite[Lemma~3.2]{ding2024critical}. 

Let us now give a brief outline for the strategy of the proof of Lemma~\ref{lem:main_estimate}. First, in Lemma~\ref{lem:conditional_circle_average}, we will bound the conditional probability that $M_{e^{-t}}(z) > S$ given $h_{e^{-t}}(z) - h_1(z)$ uniformly in $z\in \CC$ and $t \geq 0$, and the upper bound on the conditional probability will be given in terms of $\xi$ and $S$. The main input in the proof of Lemma~\ref{lem:conditional_circle_average} will be the estimate in Proposition~\ref{prop:main_lqg_estimate}.

Next, using Lemma~\ref{lem:conditional_circle_average}, we will prove a version of Lemma~\ref{lem:main_estimate} (Lemma~\ref{lem:precise_bound}) which holds for a fixed value of $t>0$ instead of all $n \in \NN$. Lemma~\ref{lem:precise_bound} will follow from Lemma~\ref{lem:conditional_circle_average} as follows. First, we have that Lemma~\ref{lem:conditional_circle_average} combined with the Gaussian tail bound allow us to bound the probability that
\begin{equation*}
M_{e^{-t}}(z) > 4\exp\left(\frac{2t - (h_{e^{-t}}(z)-h_1(z))}{A}\right)\,\,\text{and}\,\, 2t -(h_{e^{-t}}(z)-h_1(z)) \in [k,k+1]\,\,\text{for}\,\, k \in \NN.   
\end{equation*}
Then, Lemma~\ref{lem:precise_bound} will follow from a union bound over all $z \in (e^{-t-100} \ZZ^2) \cap U$, where $U \subseteq \CC$ is a fixed bounded open set. Finally, we will deduce Lemma~\ref{lem:main_estimate} from Lemma~\ref{lem:precise_bound} together with a union bound over all $n$.

We note that Lemmas~\ref{lem:conditional_circle_average} and ~\ref{lem:precise_bound} are stated in terms of $h_{e^{-t}}(z)-h_1(z)$ instead of $h_{e^{-t}}(z)$. The reason for this is that $h_{e^{-t}}(z)-h_1(z)$ is independent from $M_{e^{-t}}(z)$ due to Proposition~\ref{prop:conditional_independence} and so it makes it easier to obtain the required estimates. Then, the term $h_1(z)$ will be absorbed into the (random) constant $C$ appearing in the statement of Lemma~\ref{lem:main_estimate} since $\sup_{z \in U} |h_1(z)|$ is finite a.s.

\begin{lemma}\label{lem:conditional_circle_average}
There exist universal constants $c_0,c_1>0$ such that the following holds for all $S>1,z \in \CC$, and all $t > 0$. A.s., we have that
\begin{equation*}
    \mathbb{P}\left[M_{e^{-t}}(z) > S \giv h_{e^{-t}}(z) - h_1(z) \right] \leq c_0 (S/4)^{-b(\xi) \sqrt{M(\xi)}} + c_0 e^{-c_1 (\log S)^2}.
\end{equation*}
\end{lemma}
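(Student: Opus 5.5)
Throughout, we write $\mathbb{A}^{\text{ar}} := \mathbb{A}_{e^{-t-51/100},e^{-t-1/2}}(z)$ and $\mathbb{A}^{\text{ac}} := \mathbb{A}_{e^{-t-100},e^{-t}}(z)$ for the two annuli in Definition~\ref{def:main_quantities}.

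The first step is to show that each of the three quantities in Definition~\ref{def:main_quantities} is independent of $h_{e^{-t}}(z)-h_1(z)$. Once this is done, the conditional probability equals the unconditional one. Translation invariance of $h$ modulo additive constants reduces to $z=0$.

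\begin{itemize}
\item By Axiom~\eqref{it:locality}, the quantities $\widetilde{D}_h^{\xi}(\text{around}\,\,\mathbb{A}^{\text{ar}})$ and $\widetilde{D}_h^{\xi}(\text{across}\,\,\mathbb{A}^{\text{ac}})$ are determined by $h|_{B_{e^{-t}}(z)}$. (For the ``across'' distance, use the internal metric on the annulus: any path crossing it contains a sub-path inside it.)
\item By Axiom~\eqref{it:weyl_scaling} with a constant function, multiplying them by $e^{-\xi h_{e^{-t}}(z)}$ amounts to evaluating the metric for the field $(h-h_{e^{-t}}(z))|_{B_{e^{-t}}(z)}$.
\item The third quantity is likewise a functional of $(h-h_{e^{-t}}(z))|_{B_{e^{-t}}(z)}$.
\end{itemize}

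Proposition~\ref{prop:conditional_independence} therefore gives the required independence from $h_{e^{-t}}(z)-h_1(z)$, which is of the form $h_{e^{-s}}(z)-h_{e^{-t}}(z)$ with $s=0\le t$.

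It then suffices to bound $\mathbb{P}[\cdot > S]$ for each of the three quantities and apply a union bound. For the first two, use scale invariance (Axiom~\eqref{it:scale_translation_invariance}) with $r=e^{-t}$: the field $h(e^{-t}\cdot+z)-h_{e^{-t}}(z)$ has the law of $h$ with $h_1(0)=0$. The rescaled quantities then have the laws of $\widetilde{D}_h^{\xi}(\text{around}\,\,\mathbb{A}_{e^{-51/100},e^{-1/2}}(0))$ and $\widetilde{D}_h^{\xi}(\text{across}\,\,\mathbb{A}_{e^{-100},1}(0))$, and the factor $e^{\xi Q t}$ exactly cancels $r^{\xi Q}$. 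Now apply Proposition~\ref{prop:main_lqg_estimate} with $r=1$.

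\begin{itemize}
\item For the \emph{across} distance, take $K_1,K_2$ to be small segments near the inner and outer boundaries inside $U=\mathbb{A}_{e^{-100},1}(0)$. The distance across is bounded by $\widetilde{D}_h^{\xi}(K_1,K_2;U)$ plus nothing extra, if we choose $K_1,K_2$ touching the boundary circles; alternatively, take $K_i$ to be radial segments ending on the closed boundary circles, with $U$ a slightly enlarged annulus.
\item For the \emph{around} distance, cover a loop in the thin annulus by finitely many (say four) overlapping quadrilateral pieces. Apply Proposition~\ref{prop:main_lqg_estimate} to each piece with $K_1,K_2$ the two radial sides and $U$ the piece, and join the resulting paths. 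Each piece contributes probability at least $1-CA^{-b\sqrt{M}}$ with $A=S/4$, giving the term $c_0(S/4)^{-b(\xi)\sqrt{M(\xi)}}$.
\end{itemize}

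The delicate step is ensuring that the concatenation of crossing paths of consecutive pieces actually yields a loop disconnecting the boundaries. I would use four overlapping ``rectangles'' whose long-direction crossings must intersect the next one's crossing, a standard planar topology argument. Uniformity in $\xi$ comes directly from Proposition~\ref{prop:main_lqg_estimate}.

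For the third quantity, rescale as before. We need the tail of $\sup_{r\in[e^{-100},1]}\sup_{w\in B_{1-r}(0)}|h_r(w)-h_1(0)|$ for a whole-plane GFF. This supremum is a.s. finite by continuity of the circle-average process, and its expectation is finite. The Borell--TIS inequality, with variance bounded uniformly since $r\ge e^{-100}$, then gives $\mathbb{P}[\sup>\log S]\le c_0e^{-c_1(\log S)^2}$. Summing the three bounds and adjusting constants gives the statement. The constants are universal because the geometry is fixed, and $\xi$ enters only through the exponent $b(\xi)\sqrt{M(\xi)}$.
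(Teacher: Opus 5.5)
Your plan matches the paper's proof. Independence of $M_{e^{-t}}(z)$ from $h_{e^{-t}}(z)-h_1(z)$ comes from locality, Weyl scaling and Proposition~\ref{prop:conditional_independence}. Scale and translation invariance reduce to fixed annuli centered at $0$. Proposition~\ref{prop:main_lqg_estimate} handles the two distances, and a Gaussian tail handles the circle-average oscillation; your Borell--TIS argument is a fine self-contained replacement for the paper's appeal to \cite[Lemma~3.3]{ding2024critical}.

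\textbf{The gap: the around bound.} Suppose the pieces are sectors of $\mathbb{A}_{e^{-51/100},e^{-1/2}}(0)$ and you apply Proposition~\ref{prop:main_lqg_estimate} with $K_1,K_2$ the two radial sides. You then get four \emph{angular} crossings. In the overlap of two consecutive sectors, both crossings join the same pair of opposite sides (the two bounding rays), and such crossings need not meet: one can hug $\partial B_{e^{-51/100}}(0)$ while the other hugs $\partial B_{e^{-1/2}}(0)$. So the union of your four paths need not be connected, and they cannot simply be joined. Appealing to ``long-direction crossings that must intersect the next one'' does not repair this. Two crossings of a quadrilateral are forced to intersect only when they join \emph{different} pairs of opposite sides, so in each overlap you need one angular and one radial crossing.

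\textbf{The fix, as in the paper.} Let $G_1=\mathbb{A}_{e^{-51/100},e^{-1/2}}(0)\cap\{\Re w>-e^{-51/100}\}$, the annulus cut open on the left. Cross it angularly between the vertical segments on the line $\Re w=-e^{-51/100}/2$ in the upper and lower half-planes. Let $G_2$ be its mirror image. Let $F_1,F_2$ be the parts of the annulus with $|\Re w|<e^{-51/100}/2$ in the upper and lower half-planes, and cross them \emph{radially}. Each angular path must traverse $F_1$ and $F_2$ from one vertical side to the other, so it meets both radial paths. Hence the four paths form a connected set containing a loop around the annulus of length $<4S$. Using short-direction crossings costs nothing here, since Proposition~\ref{prop:main_lqg_estimate} gives the exponent $b(\xi)\sqrt{M(\xi)}$ for any fixed $(K_1,K_2,U)$.

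\textbf{Keep exactly four pieces.} With $k$ pieces the tail is $(S/k)^{-b(\xi)\sqrt{M(\xi)}}=(k/4)^{b(\xi)\sqrt{M(\xi)}}(S/4)^{-b(\xi)\sqrt{M(\xi)}}$. For $k>4$ the prefactor is not universal: it blows up as $b(\xi)\sqrt{M(\xi)}$ grows. So, for example, adding radial crossings to four sectors would not give the stated form.

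\textbf{A smaller point on the across bound.} Segments inside the open annulus do not work, since a path between them need not cross it. Take $K_1=\partial B_{e^{-100}}(0)$, $K_2=\partial B_1(0)$, $U=B_2(0)$. Alternatively, use radial segments lying outside the closed annulus, as in your second option.
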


\begin{proof}
By the locality and the Weyl scaling properties of $\widetilde{D}_h^{\xi}$ (Axioms~\eqref{it:locality} and ~\eqref{it:weyl_scaling} respectively), we obtain that the random variable $M_{e^{-t}}(z)$ is a.s. determined by $h|_{B_{e^{-t}}}(z)$, viewed modulo additive constant, and so it is a.s. determined by $(h-h_{e^{-t}}(z))|_{B_{e^{-t}}(z)}$. Hence, combining with Proposition~\ref{prop:conditional_independence} and the translation invariance of the law of $h$, it follows that $M_{e^{-t}}(z)$ is independent from $h_{e^{-t}}(z)-h_1(z)$. Therefore, to complete the proof of the lemma, it suffices to prove a tail bound for the unconditional law of $M_{e^{-t}}(z)$.

By the translation invariance of the law of $h$, modulo additive constant, combined with Weyl scaling, we obtain that
\begin{align}\label{eq:equality_in_law}
    &\exp(-\xi h_{e^{-t}}(z) + \xi Q(\xi) t) \widetilde{D}_h^{\xi}(\text{around}\,\, \mathbb{A}_{e^{-t-51/100},e^{-t-1/2}}(z)) \stackrel{d}{=} \widetilde{D}_h^{\xi}(\text{around}\,\, \mathbb{A}_{e^{-51/100},e^{-1/2}}(0)),\nonumber \\
    &\exp(-\xi h_{e^{-t}}(z) + \xi Q(\xi) t) \widetilde{D}_h^{\xi}(\text{across}\,\, \mathbb{A}_{e^{-t-100},e^{-t}}(z)) \stackrel{d}{=} \widetilde{D}_h^{\xi}(\text{across}\,\, \mathbb{A}_{e^{-100},1}(0)).
\end{align}
Thus, combining with Proposition~\ref{prop:main_lqg_estimate}, we obtain that
\begin{equation}\label{eq:dist_across_not_big}
    \mathbb{P}[\exp(-\xi h_{e^{-t}}(z) + \xi Q(\xi) t) \widetilde{D}_h^{\xi}(\text{across}\,\, \mathbb{A}_{e^{-t-100},e^{-t}}(z))>S] = O_S(S^{-b(\xi) \sqrt{M(\xi)}})\quad \text{as} \quad S \to \infty,
\end{equation}
at a rate which is universal. 

Next, we consider the regions
\begin{align*}
   & G_1 = \mathbb{A}_{e^{-51/100},e^{-1/2}}(0) \cap \{w \in \CC : \Re(w) > -e^{-51/100}\}, G_2 = \mathbb{A}_{e^{-51/100},e^{-1/2}}(0) \cap \{w \in \CC : \Re(w) < e^{-51/100}\},\\
   &F_1 = \mathbb{A}_{e^{-51/100},e^{-1/2}}(0) \cap \{w \in \CC : -e^{-51/100}/2 <\Re(w) < e^{-51/100}/2, \Im(w)>0\},\\
   &F_2 = \mathbb{A}_{e^{-51/100},e^{-1/2}}(0) \cap \{w \in \CC : -e^{-51 / 100} / 2 < \Re(w)<e^{-51 / 100} / 2 , \Im(w) < 0\}.
\end{align*}
Then, Proposition~\ref{prop:main_lqg_estimate} implies that 
\begin{align*}
    \mathbb{P}[\widetilde{D}_h^{\xi}(K_{1,1},K_{2,1} ; G_1) \geq S] + \mathbb{P}[\widetilde{D}_h^{\xi}(K_{1,2},K_{2,2} ; G_2) \geq S] = O_S(S^{-b(\xi) \sqrt{M(\xi)}})\,\,\text{as} \,\, S 
    \to \infty
\end{align*}
and for $j=1,2$, we have
\begin{align*}
    \mathbb{P}[\widetilde{D}_h^{\xi}(\partial F_j \cap \partial B_{e^{-1/2}}(0) , \partial F_j \cap \partial B_{e^{-51/100}}(0) ; F_j) \geq S] = O_S(S^{-b(\xi) \sqrt{M(\xi)}})\,\,\text{as}\,\,S \to \infty,
\end{align*}
at a rate which is universal, where
\begin{align*}
    &K_{1,1} = \mathbb{A}_{e^{-51/100},e^{-1/2}}(0) \cap \{w \in \CC : \Re(w) = -e^{-51/100}/2 , \Im(w)>0\},\\
    &K_{2,1} = \mathbb{A}_{e^{-51/100},e^{-1/2}}(0) \cap \{w \in \CC : \Re(w) = -e^{-51/100}/2 , \Im(w)<0\},
\end{align*}
and
\begin{align*}
    &K_{1,2} = \mathbb{A}_{e^{-51/100},e^{-1/2}}(0) \cap \{w \in \CC : \Re(w) = e^{-51/100}/2 , \Im(w)>0\},\\
    &K_{2,2} = \mathbb{A}_{e^{-51/100},e^{-1/2}}(0) \cap \{w \in \CC : \Re(w) = e^{-51/100} /2, \Im(w)<0\}.
\end{align*}
Suppose that we are working on the event that
\begin{align*}
    &\widetilde{D}_h^{\xi}(K_{1,1},K_{2,1} ; G_1) < S,\,\,\widetilde{D}_h^{\xi}(K_{1,2},K_{2,2} ; G_2) < S,\\
    &\widetilde{D}_h^{\xi}(\partial F_j \cap \partial B_{e^{-51/100}}(0) , \partial F_j \cap \partial B_{e^{-1/2}}(0) ; F_j) < S \quad \text{for} \quad j=1,2.
\end{align*}
Then, for all $i=1,2$, there exist paths $P_i,Q_i$ in $G_i$ and $F_i$ respectively, such that $P_i$ (resp.\ $Q_i$) connects $K_{1,i}$ to $K_{2,i}$ (resp.\ $\partial F_i \cap \partial B_{e^{-51/100}}(0)$ to $\partial F_i \cap \partial B_{e^{-1/2}}(0)$), and such that $\text{len}(P_i ; \widetilde{D}_h^{\xi}) < S$ and $\text{len}(Q_i ; \widetilde{D}_h^{\xi}) < S$. Then, there exists a path $P$ contained in the concatenation of the paths $P_1,P_2,Q_1$, and $Q_2$, such that $P$ is a path in $\mathbb{A}_{e^{-51/100},e^{-1/2}}(0)$ disconnecting $\partial B_{e^{-51/100}}(0)$ from $\partial B_{e^{-1/2}}(0)$ and $\text{len}(P ; \widetilde{D}_h^{\xi}) < 4S$. Therefore, we obtain that
\begin{equation}\label{eq:dist_around_not_big}
\mathbb{P}[\widetilde{D}_h^{\xi}(\text{around}\,\,\mathbb{A}_{e^{-51/100},e^{-1/2}}(0)) > S] = O_S((S/4)^{-b(\xi) \sqrt{M(\xi)}}) \quad \text{as} \quad S \to \infty,   
\end{equation}
at a rate which is universal.

Finally, it follows from combining the scale and translation invariance of the law of $h$, viewed modulo additive constant, with the argument given in the proof of \cite[Lemma~3.3]{ding2024critical} (see in particular equation (3.8) in the proof of \cite[Lemma~3.3]{ding2024critical}), that there exist universal constants $b_0,b_1 \in (0,\infty)$ such that
\begin{equation}\label{eq:bound_on_circle_averages}
\mathbb{P}\left[\sup_{r \in [e^{-t-100},e^{-t}]} \sup_{w \in B_{e^{-t}-r}(z)} (|h_r(w) - h_{e^{-t}}(z)|) > \log S \right] \leq b_0 e^{-b_1 (\log S)^2} \,\, \text{for all}\,\,z \in \CC, t\geq 0, S>1.  
\end{equation}
Hence, the proof of the lemma is complete by combining \eqref{eq:equality_in_law}, ~\eqref{eq:dist_across_not_big}, ~\eqref{eq:dist_around_not_big} with \eqref{eq:bound_on_circle_averages}.    
\end{proof}

\begin{lemma}\label{lem:precise_bound}
    Fix a bounded open set $U \subseteq \CC$ and $\alpha , A \in (0,\infty)$. Suppose that we have chosen the functions $b,M$ such that $b(\xi) \sqrt{M(\xi)} > 2A$. Then, there exist constants $c_0,c_1 \in (0,\infty)$ depending only on $U,\alpha,$ and $A$ such that for all $t\geq 1$, it holds with probability at least 
    \begin{equation*}
        1-c_0 t^{1-\left(\frac{b(\xi) \sqrt{M(\xi)}}{A}-2\right)\alpha} - c_0 e^{-c_1 (\log t)^2}
    \end{equation*}
that the following is true. For all $z \in (e^{-t-100} \ZZ^2) \cap U$ such that $h_{e^{-t}}(z) - h_1(z) \leq 2t - \alpha \log t$, we have that
\begin{equation*}
    M_{e^{-t}}(z) \leq 4\exp\left(\frac{2t - (h_{e^{-t}}(z) - h_1(z))}{A}\right).
\end{equation*}
\end{lemma}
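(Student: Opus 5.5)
The plan is a union bound over the lattice points at scale $e^{-t}$, organized by the value of $X_z := h_{e^{-t}}(z)-h_1(z)$. Lemma~\ref{lem:conditional_circle_average} will bound $M_{e^{-t}}(z)$ conditionally on $X_z$, and an explicit Gaussian density bound will handle $X_z$. The key point is that the Gaussian cost of $X_z$ being close to $2t$ exactly cancels the number $\asymp e^{2t}$ of points $z$, up to a polynomial factor.

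\emph{Setup.} Fix $z \in (e^{-t-100}\ZZ^2)\cap U$. By the proof of Lemma~\ref{lem:conditional_circle_average}, $M_{e^{-t}}(z)$ is independent of $X_z$, and $X_z \sim N(0,t)$. For $k \geq 0$ let $E_k(z)$ be the event that $2t - X_z \in [k,k+1]$. Since $X_z\le 2t-\alpha\log t$ means $2t-X_z\ge\alpha\log t$, the bad event is contained in
\begin{equation*}
\bigcup_{k \geq \lfloor \alpha \log t\rfloor} \Big( E_k(z) \cap \{ M_{e^{-t}}(z) > 4 e^{k/A}\}\Big),
\end{equation*}
using that $4 e^{(2t-X_z)/A} \geq 4e^{k/A}$ on $E_k(z)$. (Starting from $\lfloor \alpha\log t\rfloor$ only adds one term and changes no estimate below.)

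\emph{Factoring one term.} By independence,
\begin{equation*}
\mathbb{P}\big[E_k(z)\cap\{M_{e^{-t}}(z)>4e^{k/A}\}\big] = \mathbb{P}[E_k(z)]\cdot \mathbb{P}[M_{e^{-t}}(z)>4e^{k/A}].
\end{equation*}
Lemma~\ref{lem:conditional_circle_average} with $S = 4e^{k/A}$ gives
\begin{equation*}
\mathbb{P}[M_{e^{-t}}(z)>4e^{k/A}] \leq c_0 e^{-k b(\xi)\sqrt{M(\xi)}/A} + c_0 e^{-c_1 k^2/A^2}.
\end{equation*}
For the Gaussian factor, write $m = 2t - x$ and use the elementary identity $(2t-m)^2/(2t) = 2t - 2m + m^2/(2t) \geq 2t-2m$, valid for every real $m$. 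The density of $N(0,t)$ at $x = 2t-m$ is therefore at most $(2\pi t)^{-1/2}e^{-2t+2m}$. Since $m \leq k+1$ on $E_k(z)$,
\begin{equation*}
\mathbb{P}[E_k(z)] \lesssim t^{-1/2}e^{-2t+2k+2}.
\end{equation*}
This holds for all $k$, including $k>2t$, so there is no case split.

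\emph{Union bound and summation.} The number of lattice points is $\lesssim_U e^{2t+200}$, so the $e^{-2t}$ cancels. What remains is
\begin{equation*}
\lesssim_U t^{-1/2}\sum_{k\geq \lfloor \alpha\log t\rfloor} \Big(e^{-k(b(\xi)\sqrt{M(\xi)}/A - 2)} + e^{2k - c_1k^2/A^2}\Big).
\end{equation*}
The first series converges because $b(\xi)\sqrt{M(\xi)}/A - 2>0$. Since $b(\xi)\sqrt{M(\xi)}/A - 2 > 0$, the geometric sum is at most $C t^{-\alpha(b\sqrt{M}/A - 2)}$, with $C$ depending on $\alpha, A$. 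Here I have to be careful that the constant does not blow up as $b\sqrt M/A\downarrow 2$; multiplying by $t^{1/2}\le t$ should absorb a factor like $(1-e^{-(b\sqrt{M}/A-2)})^{-1}$ for $t\ge$ some threshold, or else one simply accepts the weaker power $t^{1-\cdots}$, which is what the statement allows. For the second series, once $k \geq 4A^2/c_1$ the terms are at most $e^{-c_1k^2/(2A^2)}$. So for $t$ large enough that $\alpha\log t \geq 4A^2/c_1$, the tail sum is $\lesssim e^{-c_1' (\log t)^2}$ with $c_1'$ depending on $\alpha$ and $A$. For the remaining bounded range of $t$, the claim is trivial after enlarging $c_0$.

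\emph{Main obstacle.} The delicate point is that the circle-average part of $M_{e^{-t}}(z)$ (item~\eqref{it:circle_average_not_large} of Definition~\ref{def:main_quantities}) only has a Gaussian-in-$\log S$ tail. That tail alone cannot beat the $e^{2t}$ lattice points. It works only because it is paired with the Gaussian factor $e^{-2t+2k}$ and because $k \geq \alpha\log t$ forces $k$ large, which produces the $e^{-c_1(\log t)^2}$ term. Uniformity in $\xi$ must also be checked: it holds because $\xi$ enters only through $b(\xi)\sqrt{M(\xi)}$, and the constants in Lemma~\ref{lem:conditional_circle_average} are universal.
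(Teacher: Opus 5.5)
Your route is the same as the paper's: a union bound over $(e^{-t-100}\ZZ^2)\cap U$, slicing by $2t-(h_{e^{-t}}(z)-h_1(z))\in[k,k+1]$, independence plus Lemma~\ref{lem:conditional_circle_average}, and a Gaussian factor $e^{-2t+2k}$ that cancels the $\asymp e^{2t}$ lattice points. Your density bound via $(2t-m)^2/(2t)\ge 2t-2m$ and your handling of the $e^{2k-c_1k^2/A^2}$ series are fine. However, the point you flag is a real gap, and neither remedy you suggest closes it.

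Put $\lambda:=b(\xi)\sqrt{M(\xi)}/A-2$. The lemma requires $c_0,c_1$ to depend only on $U,\alpha,A$, i.e.\ to be uniform over all $\lambda>0$. You use $e^{-2t+2m}$ also for $m>2t$ (``no case split''), so your first series runs over all $k\ge\lfloor\alpha\log t\rfloor$ and equals $e^{-\lambda\lfloor\alpha\log t\rfloor}/(1-e^{-\lambda})$. For fixed $t$, the factor $(1-e^{-\lambda})^{-1}\sim\lambda^{-1}$ is unbounded as $\lambda\downarrow0$. Absorbing it into a power of $t$ ``for $t$ above some threshold'' therefore needs a threshold depending on $\xi$. ``Accepting the weaker power $t^{1-\alpha\lambda}$'' only helps if the sum has $O(t)$ terms, which yours does not.

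The paper obtains exactly that. It splits off the points with $h_{e^{-t}}(z)-h_1(z)\le0$ (the set $Z_t^{K_t}$), whose threshold is at least $4e^{2t/A}$, so $e^{2t}\,\mathbb{P}[M_{e^{-t}}(z)>4e^{2t/A}]\lesssim e^{-(2\lambda+2)t}+e^{2t-4c_1t^2/A^2}$. The remaining slices have $k\le\lceil 2t\rceil-1$, i.e.\ $O(t)$ terms each at most $t^{-\alpha\lambda}$, which is where the exponent $1-\alpha\lambda$ comes from. An equally short fix keeps your infinite sum: for $c_0\ge1$ and $t\ge1$ the asserted bound is vacuous when $\alpha\lambda\le1$. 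When $\alpha\lambda>1$, one has $(1-e^{-\lambda})^{-1}\le(1-e^{-1/\alpha})^{-1}$, which depends only on $\alpha$.

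There is a second, smaller uniformity slip, which the paper's write-up shares. Your parenthetical claim that starting at $k=\lfloor\alpha\log t\rfloor$ ``changes no estimate'' is not right. On that slice, replacing the threshold $4e^{(2t-X_z)/A}$ by $4e^{k/A}$ produces $e^{-\lambda\lfloor\alpha\log t\rfloor}$. This exceeds $t^{-\alpha\lambda}$ by the factor $e^{\lambda\{\alpha\log t\}}$, which can be nearly $e^{\lambda}$ and is not uniform as $\lambda\to\infty$. The bad event forces $2t-X_z\ge\alpha\log t$, so use the threshold $4e^{(\alpha\log t)/A}$ on the first slice, or integrate the Gaussian density against $\mathbb{P}[M_{e^{-t}}(z)>4e^{x/A}]$ over $x\ge\alpha\log t$; the loss then disappears. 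With these two adjustments your proof is complete.
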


\begin{proof}
We will follow the argument in the proof of \cite[Lemma~3.5]{ding2024critical} by partitioning the set of points $z \in (e^{-t-100} \ZZ^2) \cap U$ for which $M_{e^{-t}}(z)$ is large.

Let $k_t:=\lfloor \alpha \log t\rfloor$ and $K_t:=\lceil 2t \rceil$, and note that if $0 \leq h_{e^{-t}}(z) - h_1(z) \leq 2t -\alpha \log t$, then $2t - (h_{e^{-t}}(z) - h_1(z))$ belongs to $[k,k+1]$ for some $k \in [k_t , K_t-1]_{\ZZ}$. For $k \in [k_t , K_t-1]_{\ZZ}$, we let $Z_t^k$ be the set of $z \in (e^{-t-100} \ZZ^2) \cap U$ such that
\begin{equation*}
    M_{e^{-t}}(z) > 4 \exp\left(\frac{2t - (h_{e^{-t}}(z)-h_1(z))}{A}\right)\,\,\text{and}\,\, 2t-(h_{e^{-t}}(z)-h_1(z)) \in [k,k+1].
\end{equation*}
Also, we let $Z_t^{K_t}$ be the set of $z \in (e^{-t-100} \ZZ^2) \cap U$ such that
\begin{equation*}
    M_{e^{-t}}(z) > 4 \exp\left(\frac{2t-(h_{e^{-t}}(z)-h_1(z))}{A}\right) \quad \text{and} \quad h_{e^{-t}}(z) - h_1(z) \leq 0.
\end{equation*}

First, we we will prove that there exist constants $c_0 , c_1 \in (0,\infty)$ depending only on $U,\alpha$, and $A$ such that
\begin{equation}\label{eq:bounding_the_sum}
\mathbb{E}\left[\sum_{k=k_t}^{K_t} \# Z_t^k \right] \leq c_0 t^{1-\left(\frac{b(\xi) \sqrt{M(\xi)}}{A}-2\right)\alpha} + c_0 e^{-c_1 (\log t)^2} \quad \text{for all} \quad t \geq 1.   
\end{equation}
Lemma~\ref{lem:conditional_circle_average} implies that there exist universal constants $c_0,c_1 \in (0,\infty)$ such that for all $z \in \CC,k \in \NN$, and all $x \in [k,k+1]$, we have that
\begin{equation}\label{eq:conditioning_bound}
\mathbb{P}[M_{e^{-t}}(z) > 4\exp(x/A) \giv 2t - (h_{e^{-t}}(z) - h_1(z)) =x] \leq c_0 \exp\left(-\left(\frac{b(\xi) \sqrt{M(\xi)}}{A}\right)x\right) + c_0 \exp\left(-\frac{c_1 x^2}{A^2}\right). 
\end{equation}
Also, the random variable $h_{e^{-t}}(z)-h_1(z)$ is a centered Gaussian with variance $t$, and so if $k \in [k_t,K_t]_{\ZZ}$, we have that
\begin{align}\label{eq:gaussian_tail_bound}
\mathbb{P}[2t - (h_{e^{-t}}(z) - h_1(z)) \leq k+1] &= \mathbb{P}[h_{e^{-t}}(z) - h_1(z) \geq 2t - (k+1)]\nonumber\\
&\leq \exp\left(-\frac{(2t-k-1)^2}{2t}\right)\nonumber\\
&\leq \exp(-2t + 2(k+1)).   
\end{align}

Combining \eqref{eq:conditioning_bound} with \eqref{eq:gaussian_tail_bound}, we obtain that for all $z \in (e^{-t-100} \ZZ^2) \cap U$ and all $k \in [k_t,K_t-1]_{\ZZ}$, we have that
\begin{equation*}
    \mathbb{P}[z \in Z_t^k] \leq c_0 \exp\left(-\left(\frac{b(\xi) \sqrt{M(\xi)}}{A}-2\right) k -2t\right) + c_0 \exp\left(-\left(\frac{c_1}{A^2}\right) k^2 -2t + 2k\right),
\end{equation*}
by possibly taking $c_0$ to be larger (in a universal way). Moreover, we have by Lemma~\ref{lem:conditional_circle_average} that
\begin{align*}
    \mathbb{P}[z \in Z_t^{K_t}] \leq \mathbb{P}[M_{e^{-t}}(z) > 4\exp(2t / A)] \leq c_0 \exp\left(-\left(\frac{2b(\xi) \sqrt{M(\xi)}}{A}\right) t\right) + c_0 \exp\left(-\left(\frac{4c_1}{A^2}\right) t^2 \right).
\end{align*}
Therefore, by taking a union bound over $O_t(e^{2t})$ points in $(e^{-t-100} \ZZ^2) \cap U$ and by possibly taking $c_0$ to be larger and $c_1>0$ to be smaller (in a way that depends only on $U$ and $A$), we get that
\begin{equation*}
    \mathbb{E}[\#Z_t^k] \leq c_0 \exp\left(-\left(\frac{b(\xi) \sqrt{M(\xi)}}{A}-2\right)k \right) + c_0 \exp\left(-\left(\frac{c_1}{A^2}\right)k^2 + 2k \right)
\end{equation*}
for all $k \in [k_t,K_t-1]_{\ZZ}, t \geq 1$ and 
\begin{equation*}
    \mathbb{E}[\# Z_t^{K_t}] \leq c_0 \exp\left(-\left(\frac{b(\xi) \sqrt{M(\xi)}}{A}-2\right) t \right) + c_0 \exp\left(-\left(\frac{c_1}{A^2}\right) K_t^2 \right).
\end{equation*}

Hence, 
\begin{align}\label{eq:bounding_sum_1}
\mathbb{E}\left[\sum_{k=k_t}^{K_t} \#Z_t^k \right] &\leq c_0 \sum_{k=k_t}^{K_t-1} \left(\exp\left(-\left(\frac{b(\xi)\sqrt{M(\xi)}}{A}-2\right) k\right) + \exp\left(-\left(\frac{c_1}{A^2}\right)k^2 + 2k\right)\right)\notag\\
&+c_0 \exp\left(-\left(\frac{b(\xi) \sqrt{M(\xi)}}{A}-2\right)t \right) + c_0 \exp\left(-\left(\frac{c_1}{A^2}\right)K_t^2\right).
\end{align}
for all $t \geq 1$.

Suppose that we choose the functions $b,M$ such that $b(\xi) \sqrt{M(\xi)} > 2A$. Then, it holds that
\begin{equation*}
    \exp\left(-\left(\frac{b(\xi) \sqrt{M(\xi)}}{A}-2\right)k\right)\leq \exp\left(-\left(\frac{b(\xi) \sqrt{M(\xi)}}{A}-2\right)\alpha \log t\right)
\end{equation*}
for all $k \in [k_t , K_t-1]_{\ZZ}, t \geq 1$ and
\begin{equation*}
    \exp\left(-\left(\frac{b(\xi) \sqrt{M(\xi)}}{A}-2\right)t \right) \leq \exp\left(-\left(\frac{b(\xi)\sqrt{M(\xi)}}{A}-2\right) \alpha \log t \right)
\end{equation*}
for all $t \geq t_0$, where $t_0 > 1$ depends only on $\alpha$.
Moreover, there exist constants $\widetilde{c}_0,\widetilde{c}_1 \in (0,\infty)$ depending only on $U,\alpha$, and $A$, such that
\begin{equation*}
    c_0 \exp\left(-\left(\frac{c_1}{A^2}\right) k^2 + 2k\right) \leq \widetilde{c}_0 \exp(-\widetilde{c}_1 k^2) \,\,\text{for all} \,\, k \in [k_t,K_t]_{\ZZ}, t \geq 1.
\end{equation*}
Therefore, combining with \eqref{eq:bounding_sum_1} and by possibly taking $\widetilde{c}_0$ to be larger (in a way that depends only on $U,\alpha$, and $A$), we obtain that
\begin{equation*}
    \mathbb{E}\left[\sum_{k=k_t}^{K_t} \#Z_t^k \right] \leq \widetilde{c}_0 t \exp\left(-\left(\frac{b(\xi) \sqrt{M(\xi)}}{A}-2\right) \alpha \log t \right) + \widetilde{c}_0 t \exp(-\widetilde{c}_1 (\log t)^2) \,\,\text{for all} \,\,t \geq 1.
\end{equation*}
This completes the proof of \eqref{eq:bounding_the_sum}.

Finally, we note that \eqref{eq:bounding_the_sum} implies that 
\begin{equation*}
    \mathbb{P}\left[\bigcup_{k=k_t}^{K_t} Z_t^k \neq \emptyset \right] \leq \mathbb{E}\left[\sum_{k=k_t}^{K_t} \#Z_t^k \right] \leq c_0 t^{1-\left(\frac{b(\xi) \sqrt{M(\xi)}}{A}-2\right)\alpha} + c_0 e^{-c_1 (\log t)^2} \,\, \text{for all}\,\, t \geq 1.
\end{equation*}
This completes the proof of the lemma since we have that
\begin{equation*}
    M_{e^{-t}}(z) \leq 4\exp\left(\frac{2t - (h_{e^{-t}}(z) - h_1(z))}{A}\right)
\end{equation*}
for all $z \in (e^{-t-100} \ZZ^2) \cap U$ such that $h_{e^{-t}}(z) - h_1(z) \leq 2t - \alpha \log t$ if $\bigcup_{k=k_t}^{K_t} Z_t^k = \emptyset$.  
\end{proof}

Now we are ready to prove Lemma~\ref{lem:main_estimate}.

\begin{proof}[Proof of Lemma~\ref{lem:main_estimate}.]
Since $U$ is bounded and $z \mapsto h_1(z)$ is continuous, we have that
\begin{equation*}
    C_0:=\sup_{z \in U} |h_1(z)| < \infty \quad \text{a.s.}
\end{equation*}
Fix $1/5 < \alpha < \alpha' < 1/4$ deterministic. Then, Proposition~\ref{prop:circle_average_bound} implies that it is a.s. the case that there exists $n_0 \in \NN$ such that for all $n \geq n_0$, we have that
\begin{equation*}
    |h_{e^{-n}}(z)| \leq 2n -\alpha' \log n \quad \text{for all} \quad z \in (e^{-n-100} \ZZ^2) \cap U.
\end{equation*}
Since $(\alpha'-\alpha) \log n > C_0$ for all $n \in \NN$ sufficiently large, we obtain that it is a.s. the case that there exists $n_1 \in \NN, n_1 \geq n_0$, such that
\begin{equation*}
    |h_{e^{-n}}(z)| \leq 2n -\alpha \log n - C_0 \quad \text{for all} \quad z \in (e^{-n-100} \ZZ^2) \cap U.
\end{equation*}
It follows that there exist deterministic constants $C_0 \in (0,\infty), n_0 \in \NN$ depending only on $\alpha,\varepsilon$, and $U$, such that the following holds with probability at least $1-\varepsilon / 100$.
\begin{equation}\label{eq:gff_bounds}
    \sup_{z \in U} |h_1(z)| \leq C_0,\,\,|h_{e^{-n}}(z)| \leq 2n -\alpha \log n - C_0\,\,\text{for all} \,\, n \geq n_0, z \in (e^{-n-100} \ZZ^2) \cap U.
\end{equation}

Next, for all $n \in \NN$, we let $E_n$ denote the event that
\begin{equation*}
    M_{e^{-n}}(z) \leq 4\exp\left(\frac{2n - (h_{e^{-n}}(z)-h_1(z))}{A}\right)
\end{equation*}
for all $z \in (e^{-n-100} \ZZ^2) \cap U$ with $h_{e^{-n}}(z) - h_1(z) \leq 2n - \alpha \log n$. Then, Lemma~\ref{lem:precise_bound} implies that there exist constants $c_0,c_1 \in (0,\infty)$ depending only on $U,\alpha$, and $A$ such that
\begin{equation*}
    \mathbb{P}[E_n] \geq 1 - c_0 n^{1-\left(\frac{b(\xi) \sqrt{M(\xi)}}{A}-2\right)\alpha } - c_0 \exp(-c_1 (\log n)^2).
\end{equation*}
Note that
\begin{equation*}
    \left(\frac{b(\xi)\sqrt{M(\xi)}}{A}-2\right)\alpha - 1 > 2
\end{equation*}
and so 
\begin{equation*}
    \mathbb{P}[E_n^c] \leq c_0 n^{-2} + c_0 \exp(-c_1 (\log n)^2) \quad \text{for all} \quad n \in \NN,
\end{equation*}
which implies that $\sum_{n \in \NN} \mathbb{P}[E_n^c] \lesssim 1$, where the implicit constant depends only on $U,\alpha$, and $A$. Therefore, there exists $n_1 \in \NN, n_1 \geq n_0$, depending only on $\varepsilon,U,\alpha$, and $A$, such that
\begin{equation*}
    \mathbb{P}\left[\bigcap_{n=n_1}^{\infty} E_n \right] \geq 1 - \frac{\varepsilon}{100}.
\end{equation*}
Thus, combining with \eqref{eq:gff_bounds}, we obtain that both \eqref{eq:gff_bounds} and $\cap_{n=n_1}^{\infty} E_n$ occur with probability at least $1 - \varepsilon/50$. For the rest of the proof, we will assume that both \eqref{eq:gff_bounds} and $\cap_{n=n_1}^{\infty} E_n$ occur.

Fix $n \in \NN, n \geq n_1$ and $z \in (e^{-n-100} \ZZ^2) \cap U$. Then, since \eqref{eq:gff_bounds} and $E_n$ both occur, we have that
\begin{equation*}
    h_{e^{-n}}(z) - h_1(z) \leq 2n - \alpha \log n
\end{equation*}
and 
\begin{align}\label{eq:bound_large_n}
M_{e^{-n}}(z) \leq 4\exp\left(\frac{2n - (h_{e^{-n}}(z) - h_1(z))}{A}\right)&\leq 4\exp\left(\frac{2n - h_{e^{-n}}(z) + C_0}{A}\right)\nonumber\\
& = 4e^{C_0 / A} \exp\left(\frac{2n - h_{e^{-n}}(z)}{A}\right).   
\end{align}
Moreover, since $b(\xi) \sqrt{M(\xi)}>1$, Lemma~\ref{lem:conditional_circle_average} implies that there exist universal constants $\widetilde{c}_0,\widetilde{c}_1 \in (0,\infty)$ such that
\begin{equation*}
    \mathbb{P}[M_{e^{-n}}(z) > S] \leq \widetilde{c}_0 S^{-1} + \widetilde{c}_0 e^{-\widetilde{c}_1 (\log S)^2}\,\,\text{for all}\,\,S>1,n \in \NN, z \in (e^{-n-100} \ZZ^2) \cap U.
\end{equation*}
Thus, there exists $C_1 \in (0,\infty)$ depending only on $\varepsilon$ and $U$ such that with probability at least $1 - \varepsilon/100$, it holds that
\begin{equation*}
    M_{e^{-n}}(z) \leq C_1\,\,\text{for all} \,\, 1 \leq n \leq n_1, z \in (e^{-n-100}\ZZ^2) \cap U.
\end{equation*}

Furthermore, there exists $C_2 \in (0,\infty)$ depending only on $\varepsilon,A$ and $U$ such that with probability at least $1-\varepsilon/100$, we have that
\begin{equation*}
    \exp\left(\frac{2n - h_{e^{-n}}(z)}{A}\right) \geq C_2 \,\,\text{for all}\,\, 1 \leq n \leq n_1, z \in (e^{-n-100}\ZZ^2) \cap U.
\end{equation*}
In particular, with probability at least $1-\varepsilon/50$, it holds that
\begin{equation}\label{eq:small_n'}
    M_{e^{-n}}(z) \leq \left(\frac{C_1}{C_2}\right) \exp\left(\frac{2n - h_{e^{-n}}(z)}{A}\right)\,\,\text{for all}\,\, 1 \leq n \leq n_1, z \in (e^{-n-100}\ZZ^2) \cap U.
\end{equation}
Therefore, we conclude the proof of the lemma by combining \eqref{eq:bound_large_n} with \eqref{eq:small_n'}.  
\end{proof}

\subsection{Stringing together paths}
\label{subsec:stringing_paths}

In this subsection, we are going to use Lemma~\ref{lem:main_estimate} to prove the following estimate which is going to play a crucial role in the proof of \eqref{eq:sum_of_distances}.

\begin{lemma}\label{lem:stringing_paths}
Fix a bounded open set $U \subseteq \CC$ and let $\varepsilon \in (0,1), \alpha \in (0,1/4), A \in (2,\infty)$ be such that $\alpha (1+2/A) < 1/4$. Suppose that the functions $b,M$ are chosen such that $b(\xi) \sqrt{M(\xi)} > 20 A$. Then, there exists a constant $C \in (0,\infty)$ depending only on $\varepsilon,\alpha,A$, and $U$, such that the following hold with probability at least $1-\varepsilon$. For all $z \in U$, we have that
\begin{enumerate}
    \item \label{it:average_under_control}
    \begin{equation*}
        h_{e^{-t}}(z) \leq 2t - \alpha \log t + C\,\,\text{for all} \,\, t \geq 0.
    \end{equation*}
    \item \label{it:across_under_control}
    For all $n,m \in \NN$ with $m<n$, we have
    \begin{align*}
        &\widetilde{D}_h^{\xi}(\partial B_{e^{-n-50}}(z),\partial B_{e^{-m}}(z)) \\
        &\leq C^{\xi +1} e^{C(\xi+1) / A} e^{2\xi Q(\xi)} \sum_{k=m}^{n-1} \min_{t \in [k,k+1]}\left(\exp\left(\xi h_{e^{-t}}(z) - \xi Q(\xi) t + \frac{(2+2\xi) (2t - h_{e^{-t}}(z))}{A}\right)\right).
    \end{align*}
    \item \label{it:around_under_control}
    For all $n \in \NN$, we have
    \begin{align*}
        &\widetilde{D}_h^{\xi}(\text{around}\,\, \mathbb{A}_{e^{-n-1},e^{-n}}(z))\\
        &\leq C^{\xi+1} e^{C(\xi+1)/A} e^{2\xi Q(\xi)} \min_{t \in [n+1,n+2]} \exp\left(\xi h_{e^{-t}}(z) - \xi Q(\xi) t + \frac{(2+2\xi) (2t - h_{e^{-t}}(z))}{A}\right).
    \end{align*}
\end{enumerate}
    
\end{lemma}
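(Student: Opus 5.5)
The plan is to work on the intersection of two high-probability events. The first is the event of Lemma~\ref{lem:main_estimate}, taken with the grid $e^{-n-100}\ZZ^2$. The second is an event controlling circle averages, obtained from Proposition~\ref{prop:circle_average_bound} with some $\alpha' \in (\alpha(1+2/A),1/4)$, together with the continuity of $(w,r)\mapsto h_r(w)$. For the latter I will use the third quantity in Definition~\ref{def:main_quantities}, which is already part of $M_{e^{-n}}$. Each event is given probability at least $1-\varepsilon/2$, with a deterministic constant $C$.

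\textbf{Step 1: item~\eqref{it:average_under_control}.} Fix $z\in U$ and $t\ge 0$, and set $n=\lfloor t\rfloor$. Choose a grid point $w\in e^{-n-100}\ZZ^2\cap U$ with $|w-z|\le e^{-n-100}$; enlarge $U$ slightly if $z$ is near the boundary. Then $B_{e^{-t}}(z)\subseteq B_{e^{-n}}(w)$ up to an admissible radius. The third quantity in $M_{e^{-n}}(w)$ bounds $|h_{e^{-t}}(z)-h_{e^{-n}}(w)|$ by $\log M_{e^{-n}}(w)\le \log C+(2n-h_{e^{-n}}(w))/A$. Combining this with $h_{e^{-n}}(w)\le 2n-\alpha'\log n+C$ gives
\[
h_{e^{-t}}(z)\le h_{e^{-n}}(w)(1-1/A)+2n/A+C .
\]
This is at most $2t-\alpha'(1-1/A)\log n+C'$, since $h_{e^{-n}}(w)\le 2n$ up to constants. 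The condition $\alpha(1+2/A)<1/4$ leaves room to pick $\alpha'$ so that $\alpha'(1-1/A)\ge\alpha$. The same comparison also gives
\[
|h_{e^{-n}}(w)-h_{e^{-t}}(z)|\le C+(2t-h_{e^{-t}}(z))(1/A)(1+O(1/A)),
\]
uniformly over $t\in[n,n+1]$. This is what later converts quantities at $w$ and $n$ into quantities at $z$ and any $t\in[k,k+1]$. The factor $2+2\xi$ absorbs the errors: from $\xi h$ we get $\xi\cdot 2(\cdot)/A$, and from the $M$ bound itself at most $2(\cdot)/A$.

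\textbf{Step 2: item~\eqref{it:around_under_control}.} Take $w$ to be a grid point at scale $n+1$ within $e^{-n-101}$ of $z$. Since $\mathbb{A}_{e^{-n-1-51/100},e^{-n-1-1/2}}(w)$ lies inside $\mathbb{A}_{e^{-n-1},e^{-n}}(z)$ and separates its boundaries, the around-distance of the larger annulus is at most that of the smaller one. That in turn is at most
\[
M_{e^{-n-1}}(w)\,e^{\xi h_{e^{-n-1}}(w)-\xi Q(n+1)}.
\]
Replacing $h_{e^{-n-1}}(w)$ by $h_{e^{-t}}(z)$ for $t\in[n+1,n+2]$ via Step 1 costs a factor $C^{\xi}e^{C\xi/A}\exp(2\xi(2t-h)/A)$, while replacing $n+1$ by $t$ costs at most $e^{\xi Q}$. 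Taking the minimum over $t$ then gives~\eqref{it:around_under_control}. The factor $e^{2\xi Q}$ absorbs the shifts in $t$ of up to two units; the shift in Step 3 is only one unit.

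\textbf{Step 3: item~\eqref{it:across_under_control}.} For each $k\in[m,n-1]$, take a grid point $w_k$ near $z$ at scale $k$. This gives a path $\pi_k$ crossing $\mathbb{A}_{e^{-k-100},e^{-k}}(w_k)$, and hence crossing $\mathbb{A}_{e^{-k-1/2},e^{-k-51/100}}(w_k)$ and $\mathbb{A}_{e^{-k-3/2},e^{-k-151/100}}(w_{k+1})$. It also gives around-loops $\ell_k$ in $\mathbb{A}_{e^{-k-51/100},e^{-k-1/2}}(w_k)$, and these annuli are nested and concentric up to $O(e^{-k-100})$. Consecutive crossings $\pi_k,\pi_{k+1}$ both intersect the loop $\ell_{k+1}$, so they can be joined through it. The concatenation runs from $\partial B_{e^{-m}}(z)$ down into $B_{e^{-n-50}}(z)$, since the last crossing reaches radius $e^{-(n-1)-100}$. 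Its length is therefore at most the sum over $k$ of (crossing $+$ loop), and each term is bounded exactly as in Step 2 with the minimum taken over $t\in[k,k+1]$.

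\textbf{Main obstacle.} The delicate point is the bookkeeping in Step 1. The comparison between $h_{e^{-n}}(w)$ and $h_{e^{-t}}(z)$ must hold deterministically for all $z\in U$ and $t$, not just at grid points. The error must also be of the form constant plus $(2t-h)/A$, so that it stays compatible with the $(2+2\xi)/A$ exponent and with the constant $C^{\xi+1}e^{C(\xi+1)/A}$ uniformly in $\xi$.
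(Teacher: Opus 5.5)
Your overall route matches the paper's: Lemma~\ref{lem:main_estimate} on the grid together with Proposition~\ref{prop:circle_average_bound}, then transfer from grid points to arbitrary $z$ through the third quantity in Definition~\ref{def:main_quantities}, then stringing crossings and loops. Your direct proof of item~(1) is cleaner than the paper's. The paper goes through Lemma~\ref{lem:uniform_circle_ave} and only treats times with $h_{e^{-t}}(z)\ge 2t-\alpha\log t$. You use $h_{e^{-t}}(z)\le (1-1/A)h_{e^{-n}}(w)+2n/A+\log C$ together with $\alpha A/(A-1)<\alpha(1+2/A)<1/4$, and this is correct.

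However, your scale bookkeeping is off by one. As written, Step~2 is false and Steps~1 and~3 are incomplete.
\begin{itemize}
\item \emph{Step~2.} The annulus $\mathbb{A}_{e^{-n-151/100},e^{-n-3/2}}(w)$ has outer radius $e^{-n-3/2}<e^{-n-1}$. So it lies inside $B_{e^{-n-1}}(z)$ and is disjoint from $\mathbb{A}_{e^{-n-1},e^{-n}}(z)$. A loop in it says nothing about $\widetilde{D}_h^{\xi}(\text{around}\,\,\mathbb{A}_{e^{-n-1},e^{-n}}(z))$, which is an infimum over paths \emph{in} that annulus.
\item \emph{Steps~1 and~3.} The third quantity of $M_{e^{-n}}(w)$ only controls $h_r(z)$ when $|z-w|<e^{-n}-r$. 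With $r=e^{-t}$ and $t\in[n,n+1]$, this fails for $t$ within roughly $e^{-100}$ of $n$. So your comparison is not available ``uniformly over $t\in[n,n+1]$''. The bound of the $k$-th term by $\min_{t\in[k,k+1]}$ is therefore not justified, since it needs the estimate at every $t$ in the interval, including $t$ near $k$.
\item \emph{Start of Step~3.} Your first crossing, of $\mathbb{A}_{e^{-m-100},e^{-m}}(w_m)$, need not reach $\partial B_{e^{-m}}(z)$, because $w_m\neq z$.
\end{itemize}

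The fix is the paper's pairing: always use the grid point at scale $n$ for times $t\in[n+1,n+2]$. There $e^{-n}-e^{-t}\ge (1-e^{-1})e^{-n}\gg e^{-n-99}\ge |z-z_n|$.
\begin{itemize}
\item For item~(3), take the loop $P_n(z_n)\subseteq \mathbb{A}_{e^{-n-51/100},e^{-n-1/2}}(z_n)\subseteq \mathbb{A}_{e^{-n-1},e^{-n}}(z)$.
\item For item~(2), use $Q_k(z_k)$ and $P_k(z_k)$ for $k\in\{m-1,\dots,n-2\}$, then reindex $k\mapsto k+1$. Then $Q_{m-1}(z_{m-1})$ crosses $\partial B_{e^{-m}}(z)$, and $Q_{n-2}(z_{n-2})$ reaches radius $e^{-n-98}<e^{-n-50}$.
\end{itemize}
The shift $t-n\le 2$ is exactly what the factor $e^{2\xi Q(\xi)}$ pays for.

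One further point. Replacing your coefficient $\frac{1}{A-1}$ on $2t-h_{e^{-t}}(z)$ by $\frac{2}{A}$ is only an upper bound when $2t-h_{e^{-t}}(z)$ is bounded below. You can take that lower bound from item~(1) for $t\ge 1$, at the cost of a factor of the form $e^{C(\xi+1)/A}$. The paper instead uses positivity for $n\ge n_\star$ and continuity for small $n$.
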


The proof of Lemma~\ref{lem:stringing_paths} will follow from an argument which is similar to that in the proof of \cite[Lemma~3.6]{ding2024critical}. However, as in the proof of Lemma~\ref{lem:main_estimate}, we will need to make sure that the parameters in the relevant estimates are uniform in the parameter $\xi>0$ (provided we choose the functions $b,M$ appropriately). This is the main reason for having a different statement in Lemma~\ref{lem:stringing_paths} than that of \cite[Lemma~3.6]{ding2024critical} with the term $(2t - h_{e^{-t}}(z)) / A$ in place of the term $|2t - h_{e^{-t}}(z)|^{1/2 + \zeta}$ for some $\zeta>0$, since the current statement of Lemma~\ref{lem:stringing_paths} makes it easier to prove that the required estimates are uniform in $\xi>0$.

Item~\eqref{it:across_under_control} in Lemma~\ref{lem:stringing_paths} will allow us to bound distances between different scales in terms of the circle average process $(z,r) \mapsto h_r(z)$ while item~\eqref{it:around_under_control} will allow us to string paths between pairs of circles whose centers can be different.

Before we proceed with the proof of Lemma~\ref{lem:stringing_paths}, we state some consequences of Proposition~\ref{prop:circle_average_bound} and Lemma~\ref{lem:main_estimate} that we are going to use. In particular, we fix $\alpha \in (0,1/4), A \in (2,\infty),\varepsilon \in (0,1)$ such that $\alpha (1+2/A) < 1/4$, and let $U \subseteq \CC$ be a bounded open set. Then, Lemma~\ref{lem:main_estimate} implies that there exists a constant $C_0 \in (0,\infty)$ depending only on $\varepsilon,A$, and $U$ such that with probability at least $1-\varepsilon/200$, we have that
\begin{equation}\label{eq:main_condition}
    M_{e^{-n}}(z) < C_0 \exp\left(\frac{2n - h_{e^{-n}}(z)}{A}\right)\,\,\text{for all} \,\, n \in \NN, z \in (e^{-n-100} \ZZ^2) \cap B_1(U).
\end{equation}

Suppose that \eqref{eq:main_condition} holds. Then, we have that the following hold for all $n \in \NN, z \in (e^{-n-100} \ZZ^2) \cap B_1(U)$.
\begin{enumerate}
    \item \label{it:path_around}
    There exists a path $P_n(z)$ in the annulus $\mathbb{A}_{e^{-n-51/100},e^{-n-1/2}}(z)$ which disconnects the inner from the outer boundaries of the annulus and has $\widetilde{D}_h^{\xi}$-length at most
    \begin{equation*}
        \exp(\xi h_{e^{-n}}(z) - \xi Q(\xi) n) M_{e^{-n}}(z) \leq C_0 \exp\left(\xi h_{e^{-n}}(z) - \xi Q(\xi) n + \left(\frac{2n-h_{e^{-n}}(z)}{A}\right)\right).
    \end{equation*}
    \item \label{it:path_across}
    There exists a path $Q_n(z)$ between the inner and outer boundaries of $\mathbb{A}_{e^{-n-100},e^{-n}}(z)$, whose $\widetilde{D}_h^{\xi}$-length is at most
    \begin{equation*}
        C_0 \exp\left(\xi h_{e^{-n}}(z) - \xi Q(\xi) n + \left(\frac{2n - h_{e^{-n}}(z)}{A}\right)\right).
    \end{equation*}
    \item \label{it:circle_averages_bound}
    We have
    \begin{equation*}
        \sup_{r \in [e^{-n-100},e^{-n}]} \sup_{w \in B_{e^{-n}-r}(z)} |h_r(w) - h_{e^{-n}}(z)| \leq \log C_0 + \left(\frac{2n-h_{e^{-n}}(z)}{A}\right).
    \end{equation*}   
\end{enumerate}

Fix $\alpha_0 \in (\alpha , 1/4)$ such that $\alpha (1+2/A) < \alpha_0$ and note that we can always choose such $\alpha_0$ since we have assumed that $\alpha (1+2/A) < 1/4$. Then, Proposition~\ref{prop:circle_average_bound} implies that by possibly taking $C_0$ to be larger (in a way that depends only on $\varepsilon,\alpha_0,A$, and $U$), we have with probability at least $1-\varepsilon/200$ that
\begin{equation}\label{eq:circle_ave_ineq}
    |h_{e^{-n}}(z)| \leq 2n -\alpha_0 \log n + \log C_0 \,\,\text{for all} \,\, n \in \NN, z \in (e^{-n-100} \ZZ^2) \cap B_1(U).
\end{equation}

Note that the above estimates can be applied only for the case that the annuli $\mathbb{A}_{e^{-n-51/100},e^{-n-1/2}}(z)$ and $\mathbb{A}_{e^{-n-100},e^{-n}}(z)$ are centered at points on the grid $(e^{-n-100} \ZZ^2) \cap B_1(U)$. To prove Lemma~\ref{lem:stringing_paths}, we need to extend the above estimates to the case that the above annuli are centered at \emph{any} point $z \in U$. This is the purpose of the following lemma.

\begin{lemma}\label{lem:uniform_circle_ave}
There exists a constant $C_1 \in (0,\infty)$ depending only on $\alpha,\alpha_0,\varepsilon,A$, and $U$ such that the following holds with probability at least $1-\varepsilon/50$. Fix $z \in U$. For $n \in \NN$, we let $z_n \in (e^{-n-100} \ZZ^2) \cap B_1(U)$ be chosen such that $z \in B_{e^{-n-99}}(z_n)$. Then, for all $t \in [n+1,n+2]$, we have that
\begin{equation*}
    h_{e^{-t}}(z) \leq h_{e^{-n}}(z_n) + C_1 + \left(\frac{2(2t - h_{e^{-t}}(z))}{A}\right)
\end{equation*}
 and
\begin{equation*}
    2n - h_{e^{-n}}(z_n) \leq 2(2t - h_{e^{-t}}(z)) + C_1.
\end{equation*}
\end{lemma}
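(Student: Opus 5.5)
We work on the intersection of the event \eqref{eq:main_condition} with the event \eqref{eq:circle_ave_ineq}, which has probability at least $1-\varepsilon/100$. On this event, item~\eqref{it:circle_averages_bound} holds at every grid point $z_n$.

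The key geometric observation is as follows. Fix $z \in U$, $n\in\NN$ and $t\in[n+1,n+2]$, and set $r=e^{-t}\in[e^{-n-2},e^{-n-1}]\subseteq[e^{-n-100},e^{-n}]$. Since $|z-z_n|<e^{-n-99}$, we have $z\in B_{e^{-n}-r}(z_n)$, because $e^{-n}-e^{-n-1}>e^{-n-99}$. Applying item~\eqref{it:circle_averages_bound} with $w=z$ then gives
\begin{equation*}
|h_{e^{-t}}(z)-h_{e^{-n}}(z_n)|\le \log C_0+\frac{2n-h_{e^{-n}}(z_n)}{A}.
\end{equation*}
Write $X:=2n-h_{e^{-n}}(z_n)$ and $Y:=2t-h_{e^{-t}}(z)$. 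By \eqref{eq:circle_ave_ineq}, $X\ge \alpha_0\log n-\log C_0\ge -\log C_0$. The displayed bound gives
\begin{equation*}
Y\ge 2t-h_{e^{-n}}(z_n)-\log C_0-\frac{X}{A}\ge X+2-\log C_0-\frac{X}{A}=\Big(1-\frac1A\Big)X+2-\log C_0 .
\end{equation*}
Since $A>2$, we have $1-1/A>1/2$. When $X\ge0$ this yields $X\le 2Y+2\log C_0$. When $X<0$, we have $X\le 0$ and also $Y\ge -\log C_0+2-\log C_0$, so $2Y+2\log C_0\ge -C$ for a constant $C$, and the inequality $X\le 2Y+C_1$ holds with a suitable constant. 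Either way this proves the second claim, with $C_1$ depending only on $C_0$.

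For the first claim, we again use the displayed bound:
\begin{equation*}
h_{e^{-t}}(z)\le h_{e^{-n}}(z_n)+\log C_0+\frac{X}{A}.
\end{equation*}
Substituting $X\le 2Y+C_1$ gives $X/A\le 2Y/A+C_1/A$, so after enlarging $C_1$,
\begin{equation*}
h_{e^{-t}}(z)\le h_{e^{-n}}(z_n)+C_1+\frac{2(2t-h_{e^{-t}}(z))}{A}.
\end{equation*}
This is exactly the first claim. The case $X<0$ is handled in the same way, since then $X/A<0$. The constants depend only on $C_0$ and $A$, hence only on $\varepsilon,\alpha_0,A,U$.

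The main point to check is the sign issue, namely that $X$ is bounded below by a constant. This is what \eqref{eq:circle_ave_ineq} supplies, and it is the reason the argument works on the grid $B_1(U)$ with grid spacing $e^{-n-100}$, so that $z\in B_{e^{-n}-r}(z_n)$. The $\alpha$-dependence enters only through the choice of $\alpha_0$; no further use of $\alpha(1+2/A)<1/4$ is needed here, as it is used later for item~\eqref{it:average_under_control} of Lemma~\ref{lem:stringing_paths}.
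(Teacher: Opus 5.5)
Your proof is correct. Its core step is the same as the paper's: apply item~\eqref{it:circle_averages_bound} at the grid point $z_n$ with $r=e^{-t}$ and $w=z$, then rearrange to get $(1-1/A)X\le Y+O(1)$. The difference is in how the sign issue is removed.

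The paper splits according to the size of $n$. For $n\ge n_\star$ it uses \eqref{eq:circle_ave_ineq} to force $(1-1/A)X-\log C_0>0$, hence $Y>0$, which licenses $\frac{A}{A-1}(Y+\log C_0)\le 2Y+\frac{A\log C_0}{A-1}$. For $1\le n\le n_\star$ it invokes a.s.\ continuity of $(w,r)\mapsto h_r(w)$. This gives a separate deterministic constant, depending on $n_\star$, on an additional event of probability at least $1-\varepsilon/100$.

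You instead use only the crude consequence $X\ge-\log C_0$ of \eqref{eq:circle_ave_ineq}, which holds for every $n$, and split on the sign of $X$. For $X\ge 0$, the bound $X/2\le(1-1/A)X$ suffices. For $X<0$, you get $Y\ge 2-2\log C_0$ (taking $C_0\ge 1$), so $2Y+4\log C_0>0>X$. Thus $C_1=4\log C_0$ works uniformly over all scales.

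What your route buys is that it stays on a single event, gives an explicit constant depending only on $C_0$, and skips the compactness step; the paper's scale split is the habitual pattern but is not needed here. Also, the first inequality follows from the second just by dividing by $A>0$, so your separate remark about the case $X<0$ there is unnecessary.
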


\begin{proof}
Suppose that items~\eqref{it:path_around}-\eqref{it:circle_averages_bound}, and \eqref{eq:circle_ave_ineq} hold, and note that as explained before the statement of the lemma, this occurs with probability at least $1-\varepsilon/100$. 

Let $n \in \NN, t \in [n+1,n+2]$. By item~\eqref{it:circle_averages_bound} applied with $r=e^{-t}, z_n$ in place of $z$, and $z$ in place of $w$, we get that
\begin{equation*}
    |h_{e^{-t}}(z)  - h_{e^{-n}}(z_n)| \leq \left(\frac{2n - h_{e^{-n}}(z_n)}{A}\right) + \log C_0.
\end{equation*}
Thus, it follows that
\begin{align*}
    2n - h_{e^{-n}}(z_n) &= (2n-h_{e^{-t}}(z)) + (h_{e^{-t}}(z) - h_{e^{-n}}(z_n))\\
    &\leq (2t - h_{e^{-t}}(z)) + \log C_0 + \left(\frac{2n - h_{e^{-n}}(z_n)}{A}\right),
\end{align*}
which implies that
\begin{equation*}
    \left(1-\frac{1}{A}\right) (2n - h_{e^{-n}}(z_n)) \leq (2t - h_{e^{-t}}(z)) + \log C_0.
\end{equation*}
Moreover, by \eqref{eq:circle_ave_ineq}, we obtain that there exists deterministic $n_\star \in \NN$ depending only on $\alpha_0,A$, and $C_0$, such that for all $n \geq n_\star$,
\begin{equation*}
    \left(1-\frac{1}{A}\right) (2n - h_{e^{-n}}(z_n)) - \log C_0 \geq \left(1-\frac{1}{A}\right) (\alpha_0 \log n - \log C_0) - \log C_0 > 0.
\end{equation*}
In particular, $2t - h_{e^{-t}}(z) > 0$ if $n \geq n_\star$, and so 
\begin{align}\label{eq:big_n}
    2n - h_{e^{-n}}(z_n) &\leq \left(\frac{A}{A-1}\right) \left(2t - h_{e^{-t}}(z) + \log C_0\right)\nonumber\\
    &\leq 2(2t - h_{e^{-t}}(z)) + \left(\frac{A \log C_0}{A-1}\right)
\end{align}
if $n \geq n_\star$. Note that since the process $(w,r) \mapsto h_r(w)$ is continuous a.s., we get that there exists a deterministic constant $C_1 \in (\log C_0 / (1-1/A) , \infty)$ depending only on $\varepsilon,A,C_0,n_\star$, and $U$, such that with probability at least $1-\varepsilon/100$, we have that for all $1 \leq n \leq n_\star, t \in [n+1,n+2],z \in B_1(U)$,
\begin{equation}\label{eq:small_n}
    2n - h_{e^{-n}}(z_n) \leq 2(2t - h_{e^{-t}}(z)) + C_1.
\end{equation}

It follows from combining \eqref{eq:big_n} with \eqref{eq:small_n} that with probability at least $1-\varepsilon/50$, it holds that
\begin{equation}\label{eq:all_n}
2n -h_{e^{-n}}(z_n) \leq 2(2t - h_{e^{-t}}(z)) + C_1\,\,\text{for all} \,\, n \in \NN, z \in B_1(U), t \in [n+1,n+2].
\end{equation}
Suppose that \eqref{eq:all_n} holds. Then, combining with item~\eqref{it:circle_averages_bound}, we obtain that
\begin{align*}
    h_{e^{-t}}(z) &\leq h_{e^{-n}}(z_n) + \left(\frac{2n-h_{e^{-n}}(z_n)}{A}\right) + \log C_0\\
    &\leq h_{e^{-n}}(z_n) + \frac{2(2t - h_{e^{-t}}(z))}{A} + \frac{C_1}{A} + \log C_0.
\end{align*}
This completes the proof of the lemma.
\end{proof}

With Lemma~\ref{lem:uniform_circle_ave} at hand, we are ready to prove Lemma~\ref{lem:stringing_paths}.

\begin{proof}[Proof of Lemma~\ref{lem:stringing_paths}.]
\emph{Proof of \eqref{it:average_under_control}.} Combining Lemmas~\ref{lem:main_estimate} and ~\ref{lem:uniform_circle_ave} and Proposition~\ref{prop:circle_average_bound}, we obtain that there exist constants $C_0,C_1 \in (0,\infty)$ depending only on $\alpha,\alpha_0,\varepsilon,A$, and $U$, such that the following holds with probability at least $1-\varepsilon/25$. Items~\eqref{it:path_around}-\eqref{it:circle_averages_bound}, and \eqref{eq:circle_ave_ineq} hold, and the condition in the statement of Lemma~\ref{lem:uniform_circle_ave} holds as well. For the rest of the proof, we will assume that all the above conditions hold simultaneously.

Fix $z \in U$ and $n \in \NN$, and let $z_n \in (e^{-n-100} \ZZ^2) \cap B_1(U)$ be as in Lemma~\ref{lem:uniform_circle_ave}. Then, \eqref{eq:circle_ave_ineq} combined with the condition in the statement of Lemma~\ref{lem:uniform_circle_ave} imply that if $t \in [n+1,n+2]$ is such that $h_{e^{-t}}(z) \geq 2t - \alpha \log t$, then
\begin{align*}
h_{e^{-t}}(z)&\leq h_{e^{-n}}(z_n) + +C_1 + \left(\frac{2(2t-h_{e^{-t}}(z))}{A}\right) \leq 2n -\alpha_0 \log n + \log C_0 + \left(\frac{2(2t-h_{e^{-t}}(z))}{A}\right) + C_1\\
&\leq 2n - \alpha_0 \log n + \log C_0 + C_1 + \left(\frac{2\alpha}{A}\right) \log( n+2).  
\end{align*}
Since $\alpha (1+2/A) < \alpha_0$, we obtain that there exists deterministic $n_\star \in \NN$ depending only on $C_0,C_1,\alpha,\alpha_0$, and $A$, such that
\begin{equation*}
    -\alpha_0 \log n + \left(\frac{2\alpha}{A}\right) \log (n+2) < -\alpha \log n \quad \text{for all} \quad n \geq n_\star,
\end{equation*}
and hence
\begin{equation*}
    h_{e^{-t}}(z) \leq 2n -\alpha \log n + C_1 + \log C_0 \quad \text{for all} \quad n \geq n_\star.
\end{equation*}

Note that by the continuity of the circle average process, the quantity
\begin{equation*}
    \sup_{z \in U} \sup_{t \in [0,n_\star +1]} |h_{e^{-t}}(z)|
\end{equation*}
is a.s. finite. It follows that there exists constant $C>0$ depending only on $\varepsilon,\alpha,\alpha_0,A$, and $U$, such that with probability at least $1-\varepsilon/20$, we have that
\begin{equation*}
    h_{e^{-t}}(z) \leq 2t - \alpha \log t + C \quad \text{for all} \quad t \geq 0,\,\, z \in U.
\end{equation*}
This completes the proof of \eqref{it:average_under_control}.

\emph{Proof of \eqref{it:around_under_control}.} By Lemma~\ref{lem:uniform_circle_ave} and items~\eqref{it:path_around}-\eqref{it:circle_averages_bound}, we get that 
there exists deterministic constant $C_1>0$ depending only on $\varepsilon,\alpha,\alpha_0,A$, and $U$, such that the following holds with probability at least $1-\varepsilon/20$. For all $t \in [n+1,n+2]$, the $\widetilde{D}_h^{\xi}$-lengths of the paths $P_n(z)$ and $Q_n(z)$ from items~\eqref{it:path_around} and ~\eqref{it:path_across} are each bounded above by
\begin{align}\label{eq:ineq}
    &C_0 \exp\left(\xi h_{e^{-n}}(z_n) -\xi Q(\xi) n + \left(\frac{2n - h_{e^{-n}}(z_n)}{A}\right)\right)\nonumber\\
    &\leq C_0 \exp\left(\xi h_{e^{-t}}(z) -\xi Q(\xi) n + (\xi+1) \left(\frac{2n - h_{e^{-n}}(z_n)}{A}\right) + \xi \log C_0\right)\nonumber\\
    &\leq C_0 \exp\left(\xi h_{e^{-t}}(z) - \xi Q(\xi) t +2\xi Q(\xi) + 2(\xi + 1) \left(\frac{2t - h_{e^{-t}}(z)}{A}\right) + \left(\frac{(\xi+1)C_1}{A}\right) + \xi \log C_0\right)\nonumber\\
    &\leq C_0^{\xi +1} e^{C_1 (\xi +1)/A} e^{2\xi Q(\xi)} \exp\left(\xi h_{e^{-t}}(z) - \xi Q(\xi) t + \left(\frac{(2+2\xi) (2t - h_{e^{-t}}(z))}{A}\right)\right).
\end{align}
Note that for $n \in \NN$, the path $P_n(z)$ disconnects the inner and outer boundaries of $\mathbb{A}_{e^{-n-51/100},e^{-n-1/2}}(z_n)$. Since $|z-z_n| \leq e^{-n-99}$, this also disconnects the inner and outer boundaries of $\mathbb{A}_{e^{-n-1},e^{-n}}(z)$. Thus, \eqref{it:around_under_control} follows from \eqref{eq:ineq}. 

\emph{Proof of \eqref{it:across_under_control}.} Finally, we prove item~\eqref{it:across_under_control}. Let $n,m \in \NN$ be such that $m<n$. For all $k \in \{m-1,\cdots,n-2\}$, the path $Q_k(z_k)$ from condition \eqref{it:path_across} goes from $\partial B_{e^{-k-100}}(z_k)$ to $\partial B_{e^{-k}}(z_k)$. Note that
\begin{equation*}
    |z_{k+1}-z_k| \leq |z_{k+1}-z| + |z_k-z| \leq 2 e^{-k-99},
\end{equation*}
which implies that
\begin{equation*}
    \mathbb{A}_{e^{-k-51/100},e^{-k-1/2}}(z_k) \cup \mathbb{A}_{e^{-k-1-51/100},e^{-k-1-1/2}}(z_{k+1}) \subseteq \mathbb{A}_{e^{-k-100},e^{-k}}(z_k)
\end{equation*}
and both $\mathbb{A}_{e^{-k-51/100},e^{-k-1/2}}(z_k)$ and $\mathbb{A}_{e^{-k-1-51/100},e^{-k-1-1/2}}(z_{k+1})$ disconnect the inner and outer boundaries of $\mathbb{A}_{e^{-k-100},e^{-k}}(z_k)$. Hence, the path $Q_k(z_k)$ intersects $P_k(z_k)$ and $P_{k+1}(z_{k+1})$. Therefore, the union of the paths $Q_k(z_k)$ and $P_k(z_k)$ for $k \in \{m-1,\cdots,n-2\}$ is connected. 

Furthermore, since $|z-z_{m-1}| \leq e^{-m-98}$ and $|z-z_{n-2}| \leq e^{-n-97}$, we obtain that the path $Q_{m-1}(z_{m-1})$ intersects $\partial B_{e^{-m}}(z)$ and the path $Q_{n-2}(z_{n-2})$ intersects $\partial B_{e^{-n-50}}(z)$. Hence, summing \eqref{eq:ineq} gives that
\begin{align*}
    &\widetilde{D}_h^{\xi}(\partial B_{e^{-n-50}}(z) , \partial B_{e^{-m}}(z))\\
    &\leq 2 C_0^{\xi+1} e^{C_1 (\xi+1) / A} e^{2\xi Q(\xi)}\sum_{k=m-1}^{n-2} \min_{t \in [k+1,k+2]} \left(\exp\left(\xi h_{e^{-t}}(z) - \xi Q(\xi) t + \left(\frac{(2+2\xi)(2t - h_{e^{-t}}(z))}{A}\right)\right)\right)\\
    &=2C_0^{\xi+1} e^{C_1 (\xi+1) / A} e^{2\xi Q(\xi)}\sum_{k=m}^{n-1} \min_{t \in [k,k+1]} \left(\exp\left(\xi h_{e^{-t}}(z) - \xi Q(\xi) t + \left(\frac{(2+2\xi)(2t - h_{e^{-t}}(z))}{A}\right)\right)\right).  
\end{align*}
This completes the proof of \eqref{it:across_under_control} and hence the proof of the lemma.
\end{proof}

Now, we conclude this subsection by stating and proving Lemmas~\ref{lem:stringing_paths_1} and ~\ref{lem:around_poly} below. The former is a simplified version of items~\eqref{it:across_under_control} and ~\eqref{it:around_under_control} in Lemma~\ref{lem:stringing_paths} while the latter is a simplified version of item~\eqref{it:around_under_control_1} in Lemma~\ref{lem:stringing_paths_1}. For the rest of the paper in Sections~\ref{subsec:distances_between_scales} and ~\ref{subsec:proofs_of_main_results}, we will use Lemmas~\ref{lem:stringing_paths_1} and ~\ref{lem:around_poly} in place of Lemma~\ref{lem:stringing_paths} since the estimates in Lemmas~\ref{lem:stringing_paths_1} and ~\ref{lem:around_poly} make our proofs simpler.

\begin{lemma}\label{lem:stringing_paths_1}
Let $U \subseteq \CC$ be a bounded open set and fix $\varepsilon \in (0,1), \delta \in (0,\xi_c / 2)$. Suppose that we choose the continuous functions $b,M$ such that $b(\xi) \sqrt{M(\xi)} > 40 (1+\xi_c) / \delta$ for all $\xi \in (\xi_c / 2 , \xi_c]$. Then, the following holds for all $\xi \in (\xi_c/2 , \xi_c]$. There exists deterministic constant $C>1$ depending only on $\varepsilon,\delta$, and $U$, such that the following is true with probability at least $1-\varepsilon$. Fix $z \in U$. Then, we have the following.
\begin{enumerate}
    \item \label{it:across_under_control_1}
    For all $m,n \in \NN$ with $m<n$,
    \begin{align*}
        \widetilde{D}_h^{\xi}(\partial B_{e^{-n-50}}(z) , \partial B_{e^{-m}}(z)) \leq C \sum_{k=m}^{n-1} \min_{t \in [k,k+1]} \left(\exp\left((\xi-\delta) (h_{e^{-t}}(z)-2t)\right)\right).
    \end{align*}
    \item \label{it:around_under_control_1}
    For all $n \in \NN$,
    \begin{equation*}
        \widetilde{D}_h^{\xi}(\text{around}\,\,\mathbb{A}_{e^{-n-1},e^{-n}}(z)) \leq C \min_{t \in [n+1,n+2]} \left(\exp\left((\xi-\delta) (h_{e^{-t}}(z) - 2t)\right)\right).
    \end{equation*}
\end{enumerate}  
\end{lemma}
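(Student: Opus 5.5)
The plan is to deduce both items from Lemma~\ref{lem:stringing_paths} by choosing $A$ in terms of $\delta$ and absorbing the $\xi$-dependent prefactors into one constant, uniformly over $\xi \in (\xi_c/2,\xi_c]$.

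First, fix $A := 20(1+\xi_c)/\delta$. This is larger than $2$, and the hypothesis $b(\xi)\sqrt{M(\xi)} > 40(1+\xi_c)/\delta$ gives $b(\xi)\sqrt{M(\xi)} > 2A$. The condition $b(\xi)\sqrt{M(\xi)} > 20A$ required in Lemma~\ref{lem:stringing_paths} would need the stronger $b(\xi)\sqrt{M(\xi)} > 400(1+\xi_c)/\delta$. So I would revisit where $20A$ is really used: the proof of Lemma~\ref{lem:main_estimate} only needs $(b\sqrt{M}/A - 2)\alpha > 3$ with $\alpha > 1/5$, i.e.\ $b\sqrt M/A > 17$. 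If that margin fails, I would instead take $A := 2(1+\xi_c)/\delta$ and accept a weaker absorption below. The cleanest route is to pick $A$ as large as the hypothesis permits while keeping $2(1+\xi_c)/A \le \delta/2$. Also fix $\alpha\in(0,1/4)$ with $\alpha(1+2/A)<1/4$, say $\alpha=1/5$ with $A\ge 20$. Apply Lemma~\ref{lem:stringing_paths} to get $C_0$, depending only on $\varepsilon,\delta,U$, and an event of probability at least $1-\varepsilon$ on which items \eqref{it:average_under_control}--\eqref{it:around_under_control} hold.

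Second, rewrite the exponent. We have
\[
\xi h_{e^{-t}}(z)-\xi Q(\xi)t+\frac{(2+2\xi)(2t-h_{e^{-t}}(z))}{A}
=\Bigl(\xi-\frac{2+2\xi}{A}\Bigr)(h_{e^{-t}}(z)-2t)+\xi(2-Q(\xi))t .
\]
The last term is at most $0$, since $Q(\xi)\ge 2$ for $\xi\le\xi_c$. This is where the restriction $\xi\le \xi_c$ enters. Since $(2+2\xi)/A\le 2(1+\xi_c)/A\le\delta$, the coefficient $\xi-(2+2\xi)/A$ is at least $\xi-\delta$. To compare $c(h-2t)$ with $(\xi-\delta)(h-2t)$, note that they agree when $c = \xi-\delta$ but the comparison goes the wrong way when $h-2t<0$. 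Here item~\eqref{it:average_under_control} gives $h_{e^{-t}}(z)-2t\le C_0$. Hence, whenever $h-2t\le C_0$ and $c\ge \xi-\delta$,
\[
c(h-2t)\le (\xi-\delta)(h-2t)+(c-(\xi-\delta))C_0\le(\xi-\delta)(h-2t)+\xi_c C_0 .
\]
So each summand, and each minimum over $t$, is bounded by $e^{\xi_c C_0}\exp((\xi-\delta)(h_{e^{-t}}(z)-2t))$.

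Finally, bound the prefactor $C_0^{\xi+1}e^{C_0(\xi+1)/A}e^{2\xi Q(\xi)}$ uniformly. For this we use $\xi\le\xi_c$ and $Q(\xi)\le Q(\xi_c/2)$, since $Q$ is non-increasing, and we may take $C_0\ge1$. Setting $C$ to be the resulting product gives both items.

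The main obstacle is the bookkeeping of the constant $20A$ versus the stated hypothesis $40(1+\xi_c)/\delta$. I expect to resolve it by using a smaller admissible $A$ and keeping a margin of $\delta/2$ in the coefficient comparison, since only $b\sqrt M/A$ above a fixed threshold is genuinely needed in Lemmas~\ref{lem:precise_bound} and~\ref{lem:main_estimate}.
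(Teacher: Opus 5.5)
Your argument is essentially the paper's proof: it applies Lemma~\ref{lem:stringing_paths} with $A=2(1+\xi_c)/\delta$, rewrites the exponent as you do, discards $\xi(2-Q(\xi))t\le 0$, and absorbs the gap between the coefficients $\xi-(2+2\xi)/A$ and $\xi-\delta$ into the constant. The only difference is in that absorption: the paper splits at a deterministic $t_0$ beyond which $2t-h_{e^{-t}}(z)>0$ and uses continuity of circle averages below $t_0$, whereas your use of $h_{e^{-t}}(z)-2t\le C_0$ for $t\ge 1$ from item~\eqref{it:average_under_control} covers all relevant $t$ at once and is equally valid.

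You should commit to $A=2(1+\xi_c)/\delta$, since then $20A=40(1+\xi_c)/\delta$ is exactly the hypothesis. Your first choice $A=20(1+\xi_c)/\delta$ is not covered by it. The $\delta/2$ margin you mention is neither available nor needed, because $(2+2\xi)/A=\delta(1+\xi)/(1+\xi_c)\le\delta$ already gives the bound $\xi-(2+2\xi)/A\ge\xi-\delta$ that your absorption step uses.
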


\begin{proof}
Fix $\alpha \in (0,1/4)$ such that $\alpha (1+\delta / (1+\xi_c)) < 1/4$. Since $\xi \mapsto Q(\xi)$ is continuous in $\xi$, we obtain by applying Lemma~\ref{lem:stringing_paths} with $A = 2(1+\xi_c) / \delta$ and by choosing the functions $b,M$ as in the statement of the lemma that there exists deterministic constant $C>1$ depending only on $\alpha,\varepsilon,\delta$, and $U$, such that the following holds for all $\xi \in (\xi_c/2,\xi_c]$. With probability at least $1-\varepsilon/100$, the following hold for all $z \in U$.
\begin{enumerate}
    \item \label{it:average_under_control_no_xi}
    \begin{equation*}
        h_{e^{-t}}(z) \leq 2t - \alpha \log t + C \quad \text{for all} \quad t >0.
    \end{equation*}
    \item \label{it:across_under_control_no_xi}
    For all $m,n \in \NN$ with $m<n$,
    \begin{align*}
        &\widetilde{D}_h^{\xi}(\partial B_{e^{-n-50}}(z) , \partial B_{e^{-m}}(z)) \\
        &\leq C \sum_{k=m}^{n-1} \min_{t \in [k,k+1]} \left(\exp\left(\xi h_{e^{-t}}(z) - \xi Q(\xi) t + \left(\frac{2(1+\xi)(2t - h_{e^{-t}}(z))}{A}\right)\right)\right).
    \end{align*}
    \item \label{it:around_under_control_no_xi}
    For all $n \in \NN$,
    \begin{align*}
      &\widetilde{D}_h^{\xi}(\text{around}\,\,\mathbb{A}_{e^{-n-1},e^{-n}}(z))\\
      &\leq C \min_{t \in [n+1,n+2]} \left(\exp\left(\xi h_{e^{-t}}(z) - \xi Q(\xi) t + \left(\frac{2(1+\xi)(2t-h_{e^{-t}}(z))}{A}\right)\right)\right).
    \end{align*}
\end{enumerate}
For the rest of the proof, we will assume that items~\eqref{it:average_under_control_no_xi}-\eqref{it:around_under_control_no_xi} hold.

Let $t_0 > 0$ be deterministic (depending only on $\alpha$ and $C$) such that $\alpha \log t - C>0$ for all $t \geq t_0$. Then, \eqref{it:average_under_control_no_xi} implies that
\begin{equation*}
2t-h_{e^{-t}}(z) \geq \alpha \log t - C >0 \quad \text{for all} \quad t \geq t_0, \,\,z \in U,
\end{equation*}
which implies that
\begin{align}\label{eq:large_t}
\left(\frac{2(1+\xi)}{A}\right)\left(2t - h_{e^{-t}}(z)\right) &= \left(\frac{\delta(1+\xi)}{1+\xi_c}\right)(2t-h_{e^{-t}}(z))\nonumber\\
&\leq \delta(2t-h_{e^{-t}}(z)) \quad \text{for all} \quad t \geq t_0,\,\,z \in U.
\end{align}
Moreover, since the process $(z,t) \mapsto h_{e^{-t}}(z)$ is continuous a.s., we obtain that there exists deterministic constant $C_1>0$ depending only on $\varepsilon,\delta,\alpha$, and $U$, such that with probability at least $1-\varepsilon/100$, we have that
\begin{equation}\label{eq:small_t}
\left(\frac{\delta (1+\xi)}{1+\xi_c)}\right)(2t - h_{e^{-t}}(z))  \leq \delta (2t - h_{e^{-t}}(z)) + C_1 \quad \text{for all} \quad 0<t \leq t_0,\,\,z \in U.   
\end{equation}
Suppose that \eqref{eq:small_t} holds as well and note that the probability that items~\eqref{it:average_under_control_no_xi}-\eqref{it:around_under_control_no_xi} and \eqref{eq:small_t} hold simultaneously is at least $1-\varepsilon/50$.

Combining \eqref{eq:large_t} with \eqref{eq:small_t} gives that
\begin{equation}\label{eq:all_t}
\left(\frac{\delta (1+\xi)}{1+\xi_c)}\right)(2t - h_{e^{-t}}(z)) \leq \delta (2t - h_{e^{-t}}(z)) + C_1 \quad \text{for all} \quad t \geq 0,\,\,z \in U.   
\end{equation}
It follows from combining item~\eqref{it:across_under_control_no_xi} with \eqref{eq:all_t} that for all $m,n \in \NN$ with $m<n$ and all $z \in U$,
\begin{align*}
    &\widetilde{D}_h^{\xi}(\partial B_{e^{-n-50}}(z),\partial B_{e^{-m}}(z))\\
    &\leq C \sum_{k=m}^{n-1} \min_{t \in [k,k+1]} \left(\exp\left(\xi h_{e^{-t}}(z) - \xi Q(\xi)t +\left(\frac{\delta(1+\xi)}{1+\xi_c}\right)(2t - h_{e^{-t}}(z))\right)\right)\\
    &\leq C \sum_{k=m}^{n-1} \min_{t \in [k,k+1]} \left(\exp\left(\xi h_{e^{-t}}(z) - \xi Q(\xi) t +\delta (2t - h_{e^{-t}}(z)) + C_1\right)\right)\\
    &\leq C e^{C_1} \sum_{k=m}^{n-1} \min_{t \in [k,k+1]} \left(\exp\left((\xi-\delta)(h_{e^{-t}}(z)-2t) + \xi (2-Q(\xi)) t\right)\right)\\
    &\leq C e^{C_1} \sum_{k=m}^{n-1} \min_{t \in [k,k+1]} \left(\exp\left((\xi-\delta)(h_{e^{-t}}(z)-2t)\right)\right).
\end{align*}
Similarly, combining item~\eqref{it:around_under_control_no_xi} with \eqref{eq:all_t} gives that for all $n \in \NN, z \in U$,
\begin{align*}
    \widetilde{D}_h^{\xi}(\text{around}\,\,\mathbb{A}_{e^{-n-1},e^{-n}}(z)) \leq C e^{C_1} \min_{t \in [n,n+1]} \left(\exp(\left(\xi-\delta)( h_{e^{-t}}(z)-2t)\right)\right).
\end{align*}
This completes the proof of the lemma.  
\end{proof}

\begin{lemma}\label{lem:around_poly}
Fix $\theta \in (0,\xi_c / 8)$. Then, there exists a choice of the functions $b,M$ (depending only on $\theta$) such that the following is true for all $\xi \in (\xi_c / 2, \xi_c]$. Let $U \subseteq \CC$ be a bounded open set and fix $\varepsilon \in (0,1)$. Then, there exists deterministic constant $C>0$ depending only on $\varepsilon,\theta$, and $U$, such that the following holds with probability at least $1-\varepsilon$. For all $z \in U$ and all $n \in \NN$, we have that
\begin{equation*}
    \widetilde{D}_h^{\xi}(\text{around} \,\, \mathbb{A}_{e^{-n-1},e^{-n}}(z)) \leq C n^{-\theta}.
\end{equation*}
\end{lemma}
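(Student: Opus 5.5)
We will deduce the statement from item~\eqref{it:around_under_control_1} of Lemma~\ref{lem:stringing_paths_1} combined with the circle average bound of Proposition~\ref{prop:circle_average_bound}, in the form of item~\eqref{it:average_under_control} of Lemma~\ref{lem:stringing_paths}. Fix $\theta \in (0,\xi_c/8)$. We choose $\alpha \in (0,1/4)$ close to $1/4$ and $\delta \in (0,\xi_c/2)$ small so that $\alpha(\xi_c/2 - \delta) > \theta$. For instance, since $\theta < \xi_c/8$, there is room to take $\alpha$ with $\alpha \xi_c/2 > \theta$ and then $\delta$ small enough. The functions $b,M$ are then chosen (depending only on $\theta$, through $\delta$ and $\alpha$) so that $b(\xi)\sqrt{M(\xi)} > 40(1+\xi_c)/\delta$ on $(\xi_c/2,\xi_c]$. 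We also require that they satisfy whatever largeness condition is needed to apply Lemma~\ref{lem:stringing_paths} with $A$ large enough that $\alpha(1+2/A)<1/4$. This is compatible with \eqref{eq:condition_for_normalization_1} and \eqref{eq:condition_for_nprmalization_2}, since these only require $M$ large relative to $b$.

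With probability at least $1-\varepsilon$, both conclusions hold simultaneously for all $z \in U$. First, Lemma~\ref{lem:stringing_paths_1}\eqref{it:around_under_control_1} gives
\begin{equation*}
\widetilde{D}_h^{\xi}(\text{around}\,\,\mathbb{A}_{e^{-n-1},e^{-n}}(z)) \leq C \min_{t\in[n+1,n+2]} \exp\bigl((\xi-\delta)(h_{e^{-t}}(z)-2t)\bigr).
\end{equation*}
Second, $h_{e^{-t}}(z) \leq 2t - \alpha\log t + C$ for all $t\ge 0$, by item~\eqref{it:average_under_control} of Lemma~\ref{lem:stringing_paths}, which is proved with constants depending only on $\varepsilon,\alpha,A,U$ and is independent of $\xi$ since it only concerns $h$. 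Since $\xi-\delta > \xi_c/2-\delta>0$, plugging in any $t\in[n+1,n+2]$ yields
\begin{equation*}
\widetilde{D}_h^{\xi}(\text{around}\,\,\mathbb{A}_{e^{-n-1},e^{-n}}(z)) \leq C e^{(\xi-\delta)C} t^{-(\xi-\delta)\alpha} \leq C' n^{-\alpha(\xi_c/2-\delta)} \leq C' n^{-\theta},
\end{equation*}
using $\xi \le \xi_c$ to bound $e^{(\xi-\delta)C}$ uniformly in $\xi$. This gives the claim for all $n \in \NN$, after enlarging $C'$ to absorb small $n$.

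The only delicate point is uniformity in $\xi \in (\xi_c/2,\xi_c]$. The constant of Lemma~\ref{lem:stringing_paths_1} is already uniform in $\xi$, and the exponent bound uses the worst case $\xi = \xi_c/2$. The main thing to check is that the constraint $\theta<\xi_c/8$ is exactly what permits $\alpha(\xi_c/2-\delta)>\theta$ with $\alpha<1/4$. We also need the choice of $b,M$ to be made once, depending only on $\theta$, rather than on $\varepsilon$ or $U$. This is fine because $\delta$ and $\alpha$ depend only on $\theta$.
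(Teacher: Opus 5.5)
Your proposal is correct and follows the paper's proof essentially step for step. You choose $\alpha$ close to $1/4$ and $\delta$ small with $\alpha(\xi_c/2-\delta)\ge\theta$, impose $b(\xi)\sqrt{M(\xi)}>40(1+\xi_c)/\delta$, and then plug the circle-average bound of Lemma~\ref{lem:stringing_paths}\eqref{it:average_under_control} into Lemma~\ref{lem:stringing_paths_1}\eqref{it:around_under_control_1} at $t\in[n+1,n+2]$. The only cosmetic difference is that the paper uses the single value $A=2(1+\xi_c)/\delta$ for both lemmas, so its constraint reads $\alpha(1+\delta/(1+\xi_c))<1/4$, whereas you allow a separate $A$ for Lemma~\ref{lem:stringing_paths}.
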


\begin{proof}
We choose $\alpha \in (0,1/4)$ sufficiently close to $1/4$ and $\delta \in (0,\xi_c / 2)$ sufficiently close to $0$ (depending only on $\theta$) such that $(\xi_c /2 - \delta) \alpha \geq \theta$ and $\alpha (1+\delta / (1+\xi_c)) < 1/4$. We also choose the continuous functions $b,M$ such that $b(\xi) \sqrt{M(\xi)} > 40(1+\xi_c) / \delta$ for all $\xi>0$.

Applying Lemmas~\ref{lem:stringing_paths} and ~\ref{lem:stringing_paths_1} with $A = 2(1+\xi_c) / \delta$ gives that there exists a constant $C>1$ depending only on $\alpha,\delta,\varepsilon$, and $U$, such that for all $\xi \in (\xi_c / 2,\xi_c]$, we have with probability at least $1-\varepsilon$ that item~\eqref{it:average_under_control} in Lemma~\ref{lem:stringing_paths} and item~\eqref{it:around_under_control_1} in Lemma~\ref{lem:stringing_paths_1} both hold for the above choices of $\delta,\alpha$, and $C$. For the rest of the proof, we will assume that item~\eqref{it:average_under_control} in Lemma~\ref{lem:stringing_paths} and item~\eqref{it:around_under_control_1} in Lemma~\ref{lem:stringing_paths_1} both hold.

Since $(\xi-\delta) \alpha \geq (\xi_c / 2 - \delta)\alpha \geq \theta$ for all $\xi \in (\xi_c/2,\xi_c]$, we obtain that for all $n \in \NN, z \in U$,
\begin{align*}
    \widetilde{D}_h^{\xi}(\text{around}\,\, \mathbb{A}_{e^{-n-1},e^{-n}}(z))&\leq C \min_{t \in [n+1,n+2]} \left(\exp\left((\xi-\delta) (h_{e^{-t}}(z)-2t)\right)\right)\\
    &\leq C \min_{t \in [n+1,n+2]} \left(\exp\left((\xi-\delta)(-\alpha \log t + C)\right)\right)\\
    &\leq C \exp(C(\xi-\delta)) \exp(-(\xi-\delta) \alpha \log n)\\
    &\leq C \exp(C(\xi_c - \delta)) \exp(-(\xi_c/2 - \delta) \alpha \log n)\\
    &=C \exp(C(\xi_c-\delta)) n^{-(\xi_c / 2 - \delta)\alpha}\\
    &\leq C \exp(C(\xi_c - \delta)) n^{-\theta}.
\end{align*}
This completes the proof of the lemma.   
\end{proof}

\subsection{Bounding distances between scales for big and small circle averages for the GFF}
\label{subsec:distances_between_scales}

Now, we would like to use Lemma~\ref{lem:stringing_paths_1} to bound from above distances between different scales. In particular, we would like to bound the sum appearing on the right side of item~\eqref{it:across_under_control_1} in Lemma~\ref{lem:stringing_paths_1}. This is the main goal of this subsection. To bound from above the right side of item~\eqref{it:across_under_control_1} in Lemma~\ref{lem:stringing_paths_1}, we treat separately the case that the circle average $h_{e^{-t}}(z)$ is not too close to $2t$ and the case that $h_{e^{-t}}(z)$ is close to $2t$. 

We note that the arguments used in this subsection are similar to the arguments in \cite[Section~3.4]{ding2024critical}. However, as in Sections~\ref{subsec:estimates_across_around_annuli} and ~\ref{subsec:stringing_paths}, we will have to prove that the estimates that we obtain are uniform in the parameter $\xi \in (0,\xi_c)$ provided $\xi$ is bounded away from $0$.

We start by treating the case that $h_{e^{-t}}(z)$ is not too close to $2t$ in Lemma~\ref{lem:distance_circle_small} below.

\begin{lemma}\label{lem:distance_circle_small}
Fix $\theta > 0$ and $\beta > (1+\theta) / \xi_c$. Fix also $\xi_0 \in (((1+\theta)/\beta) \vee(\xi_c/2) , \xi_c)$. Then, there exists a choice of the functions $b,M$ (depending only on $\theta,\beta$, and $\xi_0$) such that the following holds for all $\xi \in [\xi_0,\xi_c]$. Fix $\varepsilon \in (0,1)$ and let $U \subseteq \CC$ be a bounded open set. Then, there exists deterministic constant $C>0$ (depending only on $\theta,\beta,\varepsilon,\xi_0$, and $U$) such that the following holds with probability at least $1-\varepsilon$. For all $z\in U$ and all $m,n \in \NN$ with $m<n$ such that
\begin{equation*}
    h_{e^{-t}}(z) - h_1(z) \leq 2t -\beta \log t \quad \text{for all} \quad t \in [m,n]_{\ZZ},
\end{equation*}
it holds that
\begin{equation*}
    \widetilde{D}_h^{\xi}(\partial B_{e^{-n-50}}(z),\partial B_{e^{-m}}(z)) \leq C m^{-\theta}.
\end{equation*}
\end{lemma}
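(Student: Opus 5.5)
We will deduce the statement from item~\eqref{it:across_under_control_1} of Lemma~\ref{lem:stringing_paths_1}, combined with the hypothesis that $h_{e^{-t}}(z)-h_1(z)\le 2t-\beta\log t$ at the integer times $t\in[m,n]_{\ZZ}$.

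\textbf{Choice of parameters.} Since $\xi_0\beta>1+\theta$, we first choose $\delta\in(0,\xi_c/2)$ small, depending only on $\theta,\beta,\xi_0$, so that
\begin{equation*}
(\xi_0-\delta)\beta>1+\theta.
\end{equation*}
We then choose $b,M$ continuous with $b(\xi)\sqrt{M(\xi)}>40(1+\xi_c)/\delta$ for all $\xi>0$. Lemma~\ref{lem:stringing_paths_1} then applies uniformly in $\xi\in[\xi_0,\xi_c]\subseteq(\xi_c/2,\xi_c]$. It gives a deterministic $C$ such that, with probability at least $1-\varepsilon/2$, for all $z\in U$ and $m<n$,
\begin{equation*}
\widetilde{D}_h^{\xi}(\partial B_{e^{-n-50}}(z),\partial B_{e^{-m}}(z))\le C\sum_{k=m}^{n-1}\min_{t\in[k,k+1]}\exp\bigl((\xi-\delta)(h_{e^{-t}}(z)-2t)\bigr).
\end{equation*}
We also intersect with the event $\sup_{z\in U}|h_1(z)|\le C_0$. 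This event has probability at least $1-\varepsilon/2$ for a deterministic $C_0=C_0(\varepsilon,U)$, by continuity of $z\mapsto h_1(z)$.

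\textbf{Bounding each term.} For each $k\in[m,n-1]_{\ZZ}$ we bound the minimum over $t\in[k,k+1]$ by its value at the integer time $t=k$, where the hypothesis applies. This gives
\begin{equation*}
h_{e^{-k}}(z)-2k\le -\beta\log k+C_0 .
\end{equation*}
Because the exponent is negative, using the lower bound $\xi-\delta\ge\xi_0-\delta>0$ yields
\begin{equation*}
\exp\bigl((\xi-\delta)(h_{e^{-k}}(z)-2k)\bigr)\le e^{(\xi_c-\delta)C_0}\,k^{-(\xi_0-\delta)\beta}.
\end{equation*}
If $h_{e^{-k}}(z)-2k$ happens to be positive, it is at most $C_0$ in the small-$k$ range, and the constant $e^{(\xi_c-\delta)C_0}$ still covers that case.

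\textbf{Summing.} Summing over $k\ge m$ gives
\begin{equation*}
\sum_{k\ge m}k^{-(\xi_0-\delta)\beta}\lesssim m^{1-(\xi_0-\delta)\beta}\le m^{-\theta},
\end{equation*}
with an implicit constant depending only on $(\xi_0-\delta)\beta-1$. The case $m=1$ is harmless because the exponent exceeds $1$. This yields the claim with a constant depending only on $\theta,\beta,\varepsilon,\xi_0,U$.

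\textbf{Main obstacle.} The only delicate point is uniformity in $\xi$. The random constant from Lemma~\ref{lem:stringing_paths_1} is uniform in $\xi$ only because $b,M$ are fixed in advance depending on $\delta$, and $\delta$ in turn depends only on $(\theta,\beta,\xi_0)$. The exponent must be bounded below using $\xi\ge\xi_0$ rather than $\xi_c/2$; this is exactly why the hypothesis $\xi_0>(1+\theta)/\beta$ is imposed. Everything else is summation.
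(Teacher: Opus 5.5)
Your proposal is correct and follows essentially the same route as the paper: choose $\delta$ with $(\xi_0-\delta)\beta>1+\theta$, fix $b,M$ with $b(\xi)\sqrt{M(\xi)}>40(1+\xi_c)/\delta$, apply item~\eqref{it:across_under_control_1} of Lemma~\ref{lem:stringing_paths_1}, bound $\sup_{U}|h_1|$ by a deterministic constant on a high-probability event, and sum $\sum_{k\ge m}k^{-(\xi_0-\delta)\beta}\lesssim m^{-\theta}$. Evaluating each minimum at the integer time $t=k$, where the hypothesis is actually assumed, is a slightly cleaner way of writing the paper's corresponding step.
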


\begin{proof}
Fix $\delta \in (0,\xi_c / 2)$ such that $(\xi_0 - \delta) \beta > 1+\theta$ in a way that depends only on $\xi_0,\beta$, and $\theta$. We also choose the functions $b,M$ such that $b(\xi) \sqrt{M(\xi)} > 40(1+\xi_c) / \delta$ for all $\xi>0$.

Fix $\xi \in [\xi_0 , \xi_c]$. Then, Lemma~\ref{lem:stringing_paths_1} implies that there exists deterministic constant $C_0>1$ depending only on $\varepsilon,\delta$, and $U$, such that with probability at least $1-\varepsilon/100$, we have that item~\eqref{it:across_under_control_1} in Lemma~\ref{lem:stringing_paths_1} occurs with $C_0$ in place of $C$. For the rest of the proof, we will assume that we are working on the event that item~\eqref{it:across_under_control_1} in Lemma~\ref{lem:stringing_paths_1} occurs.

Fix $z \in U$ and $m,n \in \NN$ with $m<n$, and such that
\begin{equation*}
    h_{e^{-t}}(z) - h_1(z) \leq 2t -\beta \log t \quad \text{for all} \quad t \in [m,n]_{\ZZ}.
\end{equation*}
Then, combining with item~\eqref{it:across_under_control_1} in Lemma~\ref{lem:stringing_paths_1}, we obtain that for all $m,n \in \NN, z \in U$ with $m<n$,
\begin{align}\label{eq:nice_bound}
\widetilde{D}_h^{\xi}(\partial B_{e^{-n-50}}(z) , \partial B_{e^{-m}}(z))&\leq C_0 \sum_{k=m}^{n-1} \min_{t \in [k,k+1]} \left(\exp\left((\xi-\delta) (h_1(z) - \beta \log t)\right)\right)\nonumber\\
&\leq C_0 \exp\left((\xi-\delta) \sup_{w \in U} |h_1(w)| \right) \sum_{k=m}^{n-1} k^{-(\xi-\delta)\beta}.
\end{align}
By the continuity of the circle average process of $h$, we get that there exists constant $C_1>0$ depending only on $\varepsilon$ and $U$ such that
\begin{equation}\label{eq:useful_bound}
\sup_{w \in U} |h_1(w)| \leq C_1 
\end{equation}
with probability at least $1-\varepsilon/100$. We assume that \eqref{eq:useful_bound} holds as well and note that the probability that both item~\eqref{it:across_under_control_1} in Lemma~\ref{lem:stringing_paths_1} and \eqref{eq:useful_bound} occur is at least $1-\varepsilon/50$.

Note that $(\xi-\delta) \beta \geq (\xi_0 - \delta) \beta > 1+\theta$, and so \eqref{eq:nice_bound} and \eqref{eq:useful_bound} together imply that
\begin{align}\label{eq:nice_bound_2}
\widetilde{D}_h^{\xi}(\partial B_{e^{-n-50}}(z) , \partial B_{e^{-m}}(z))&\leq C_0 \exp(C_1 \xi_c) \sum_{k=m}^{n-1} k^{-(\xi_0-\delta)\beta}.   
\end{align}
Note also that there exists a constant $C_2>0$ depending only on $\xi_0,\delta$, and $\beta$, such that
\begin{equation}\label{eq:useful_bound_2}
\sum_{k=m}^{\infty} k^{-(\xi_0 - \delta) \beta} \leq C_2 m^{-(\xi_0 - \delta)\beta + 1} \leq C_2 m^{-\theta}.
\end{equation}
Therefore, the proof of the lemma is complete by combining \eqref{eq:nice_bound_2} with \eqref{eq:useful_bound_2}.    
\end{proof}

Note that $3/4 < 1/\xi_c$. Hence, \cite[Proposition~2.5]{ding2024critical} implies that if $\beta' > (1+\theta) / \xi_c$, then it is a.s. the case that there exist points $z \in U$ and arbitrarily large $t>0$ such that $h_{e^{-t}}(z) - h_1(z) \geq 2t -\beta' \log t$. Therefore, we need to bound from above the sum on the right side of item~\eqref{it:across_under_control_1} in Lemma~\ref{lem:stringing_paths_1} in the case that $h_{e^{-t}}(z) - h_1(z) \geq 2t -\beta' \log t$ for some $\beta' > (1+\theta)/\xi_c$ fixed and deterministic. This is the purpose of Lemma~\ref{lem:distance_circle_large} below.

\begin{lemma}\label{lem:distance_circle_large}
Fix $\theta \in (0,\xi_c / 8)$ and $\beta > (1+\theta) / \xi_c$. Fix also $\xi_0 \in (((1+\theta)/\beta) \vee (\xi_c / 2) , \xi_c)$. Then, there exists a choice of the functions $b,M$ (depending only on $\theta,\beta$, and $\xi_0$) such that the following holds for all $\xi \in [\xi_0,\xi_c]$. Fix $\varepsilon \in (0,1)$ and let $U \subseteq \CC$ be a bounded open set. Then, there exists deterministic constant $C>0$ (depending only on $\theta,\beta,\varepsilon,\xi_0$, and $U$) such that the following holds with probability at least $1-\varepsilon$. For all $n \in \NN$ and all $z \in (e^{-n-100} \ZZ^2) \cap U$ such that
\begin{equation*}
    h_{e^{-n}}(z) - h_1(z) \geq 2n -\beta \log n,
\end{equation*}
we have that
\begin{equation*}
    \widetilde{D}_h^{\xi}(\partial B_{e^{-n-50}}(z) , \partial B_{e^{-n/2}}(z)) \leq C n^{-\theta}.
\end{equation*}  
\end{lemma}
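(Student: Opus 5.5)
We follow the strategy of \cite[Lemma~3.13]{ding2024critical}. Fix $\delta\in(0,\xi_c/2)$ with $(\xi_0-\delta)\beta>1+\theta$ and, since $\theta<\xi_c/8$, also small enough that the exponents below are positive. Take $b,M$ with $b(\xi)\sqrt{M(\xi)}>40(1+\xi_c)/\delta$ for all $\xi$, and work on the event of Lemma~\ref{lem:stringing_paths_1} (item~\eqref{it:across_under_control_1}) and item~\eqref{it:average_under_control} of Lemma~\ref{lem:stringing_paths}, with some $\alpha\in(0,1/4)$. On this event, for $z$ and $n$ as in the statement,
\begin{equation*}
\widetilde{D}_h^{\xi}(\partial B_{e^{-n-50}}(z),\partial B_{e^{-n/2}}(z))\le C\sum_{k=\lfloor n/2\rfloor}^{n-1}\min_{t\in[k,k+1]}\exp\big((\xi-\delta)(h_{e^{-t}}(z)-2t)\big).
\end{equation*}
Split the indices $k\in[n/2,n]$ into \emph{bad} ones, where $h_{e^{-t}}(z)-h_1(z)>2t-\beta'\log t$ for every $t\in[k,k+1]$ (with $\beta'>\beta$ to be chosen), and \emph{good} ones. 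For good $k$, choosing the minimizing $t$ with $h_{e^{-t}}(z)-h_1(z)\le 2t-\beta'\log t$, and using $\sup_U|h_1|\le C_1$, the summand is at most $Ck^{-(\xi_0-\delta)\beta'}$. These terms therefore contribute $O(n^{1-(\xi_0-\delta)\beta'})\le O(n^{-\theta})$, exactly as in Lemma~\ref{lem:distance_circle_small}. For bad $k$, item~\eqref{it:average_under_control} gives a summand of at most $Cn^{-(\xi-\delta)\alpha}$. It therefore suffices to show that, with high probability, simultaneously for all relevant $(n,z)$, the number of bad $k$ is at most $n^{(\xi_0-\delta)\alpha-\theta}$. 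This exponent can be made positive because $\alpha$ can be taken close to $1/4$, $\xi_0>\xi_c/2$, and $\theta<\xi_c/8$.

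The probabilistic input is the Brownian bridge estimate \cite[Lemma~3.12]{ding2024critical}. For a fixed grid point $z$, the process $X_t=h_{e^{-t}}(z)-h_1(z)$ on $[0,n]$ is a standard Brownian motion. Conditionally on $X_n=x\ge 2n-\beta\log n$, and on the event that $X$ stays below $2t-\alpha_1\log t+C$ (with $\alpha<\alpha_1<1/4$, which holds with high probability by Proposition~\ref{prop:circle_average_bound} extended to all $t$ as in item~\eqref{it:average_under_control}), the lemma bounds the probability that the time spent above $2t-\beta'\log t$ during $[n/2,n]$ exceeds $n^{\eta}$, where $\eta:=(\xi_0-\delta)\alpha-\theta$. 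I expect this bound to be of the form $n^{-K}$ for $K$ large once $\beta'$ is large. Multiply by $\mathbb{P}[X_n\ge 2n-\beta\log n]\approx n^{O(\beta)}e^{-2n}$ and sum over the $O(e^{2n})$ grid points $z\in(e^{-n-100}\ZZ^2)\cap U$. The expected number of violating $z$ at level $n$ is then $O(n^{O(\beta)-K})$, which is summable in $n$. A Borel--Cantelli argument, together with absorbing small $n$ into the constant using the a.s.\ continuity of circle averages, gives the claim with probability $\ge 1-\varepsilon$. All constants are uniform in $\xi\in[\xi_0,\xi_c]$ because the only $\xi$-dependence is through $\xi-\delta\in[\xi_0-\delta,\xi_c]$.

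The main obstacle is this last step: making the bridge estimate quantitative enough that its decay beats both the polynomial factor $n^{O(\beta)}$ from the Gaussian density and the union bound. This is delicate because the barrier gap between $\alpha\log t$ and $\beta'\log t$ is only logarithmic. I would first try to apply \cite[Lemma~3.12]{ding2024critical} directly, choosing $\beta'$ large depending on $\beta,\alpha$. If that falls short, I would replace the crude count of bad times by a dyadic decomposition of $[n/2,n]$ according to distance from $n$, using the fact that the bridge pinned near $2n$ at time $n$ typically sits $\asymp\sqrt{n-t}$ below the line $2t$.
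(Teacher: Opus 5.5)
Your proposal is correct and follows essentially the same route as the paper: bound the distance via Lemma~\ref{lem:stringing_paths_1}, split the times in $[n/2,n]$ according to whether $h_{e^{-t}}(z)-h_1(z)$ is within a logarithm of $2t$, and control the near-critical times with the $\alpha\log$ barrier. Those times are counted via Lemma~\ref{lem:brownian_bridge} together with a union bound against the Gaussian tail; the paper uses the threshold $2t-\beta\log n$ itself and the count $(\log n)^4$ instead of your $\beta'$ and $n^{\eta}$.

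The step you flag as the main obstacle is not one. The bound in Lemma~\ref{lem:brownian_bridge} is $c_0\exp(-c_1 S/(\log n)^2)$, which decays in $S$ rather than in $\beta'$. So already with $S=n^{\eta}$ (or $S=(\log n)^4$) it is superpolynomially small and absorbs the $n^{O(\beta)}$ factor left after the union bound, with no need for a large $\beta'$ or a dyadic refinement.
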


As in the proof of \cite[Lemma~3.11]{ding2024critical} (which is the analog of Lemma~\ref{lem:distance_circle_large} in the context of \cite{ding2024critical}), the main inputs in the proof of Lemma~\ref{lem:distance_circle_large} are Lemma~\ref{lem:stringing_paths_1} and Lemma~\ref{lem:brownian_bridge} below.

\begin{lemma}(\cite[Lemma~3.12]{ding2024critical})
\label{lem:brownian_bridge}
Fix $0 < \alpha < \beta$. Let $T>0$, let $x \in [\alpha \log T, \beta \log T]$, and let $W$ be a Brownian bridge from $0$ to $2T - x$ in time $T$. For $S>0$,
\begin{equation*}
    \mathbb{P}\left[W_t \leq 2t -\alpha \log T,\,\text{for all}\,\,t \in [T/2,T],\,\,\int_{T/2}^T \mathds{1}_{\{W_t \geq 2t - \beta \log T\}} dt >S\right] \leq c_0 \exp\left(-c_1 \frac{S}{(\log T)^2}\right)
\end{equation*}
for constants $c_0,c_1>0$ depending only on $\alpha,\beta$.    
\end{lemma}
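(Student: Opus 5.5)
We would prove this by a renewal argument: after a time reversal, the bridge must stay above a fixed barrier, and every block of time of order $(\log T)^2$ spent near the barrier costs a fixed multiplicative factor in probability.

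\textbf{Step 1: reduction to a bridge from a point near the barrier.} Write $W_t = \frac{t}{T}(2T-x) - b_t$, where $b$ is a standard Brownian bridge from $0$ to $0$ in time $T$ (this is a change of sign of $b$, which does not affect its law). Then $Y_t := 2t - W_t = \frac{t}{T}x + b_t$ is a Brownian bridge from $0$ to $x$ in time $T$; the linear drift $2t$ has been removed. The event in the statement becomes
\begin{equation*}
Y_t \geq \alpha\log T \ \text{for all}\ t\in[T/2,T] \quad\text{and}\quad \int_{T/2}^T \mathds{1}_{\{Y_t\leq \beta\log T\}}\,dt > S .
\end{equation*}
Reverse time by setting $Z_s := Y_{T-s}$, which is a Brownian bridge from $x$ to $0$. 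Then shift by setting $V_s := Z_s - \alpha\log T$ and $L := (\beta-\alpha)\log T$. Now $V$ is a Brownian bridge started at $V_0 = x-\alpha\log T \in [0,L]$. The event says that $V_s > 0$ on $[0,T/2]$ (up to a null set) and that the occupation time of $\{V_s \leq L\}$ during $[0,T/2]$ exceeds $S$.

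\textbf{Step 2: comparison with Brownian motion on $[0,T/2]$.} The law of $(V_s)_{s\leq T/2}$ is absolutely continuous with respect to that of a Brownian motion $B$ started at $V_0$. The Radon--Nikodym derivative is
\begin{equation*}
\frac{p_{T/2}(B_{T/2}, -\alpha\log T)}{p_T(V_0, -\alpha\log T)} \leq \sqrt{2}\,\exp\!\left(\frac{(V_0+\alpha\log T)^2}{2T}\right) \leq \sqrt{2}\, e^{(\beta\log T)^2/(2T)},
\end{equation*}
which is bounded by a constant depending only on $\beta$, uniformly in $T$. It therefore suffices to prove the bound for a Brownian motion $B$ started in $[0,L]$, killed on hitting $0$, and required to survive up to time $T/2$.

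\textbf{Step 3: renewal argument.} Set $\sigma_0:=0$. For $k\geq 1$, let $\sigma_k$ be the first time $s\geq \sigma_{k-1}+L^2$ with $B_s\leq L$. On the interval $[\sigma_{k-1},\sigma_k)$, the occupation time of $\{B\leq L\}$ is at most $L^2$, because after time $\sigma_{k-1}+L^2$ the path is above $L$ until $\sigma_k$. Hence, if the total occupation before $T/2$ exceeds $S$, then at least $N:=\lfloor S/L^2\rfloor - 1$ of these stopping times satisfy $\sigma_k + L^2 \leq T/2$. By Brownian scaling there is $c\in(0,1)$, depending only on nothing, such that
\begin{equation*}
\inf_{y\in[0,L]}\mathbb{P}_y\big[B \text{ stays positive on } [0,L^2]\big]\leq 1-c .
\end{equation*}
Applying the strong Markov property at each $\sigma_k$, the probability of surviving all $N$ windows $[\sigma_k,\sigma_k+L^2]$ is at most $(1-c)^N \leq C\exp(-c' S/L^2)$. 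Since $L^2=(\beta-\alpha)^2(\log T)^2$, combining with the constant from Step 2 gives the bound $c_0\exp(-c_1 S/(\log T)^2)$. When $S < 2L^2$ we have $N\leq 0$, and the claim holds trivially after enlarging $c_0$.

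\textbf{Main obstacle.} The delicate point is the bookkeeping in Step 3: making sure that the occupation time exceeding $S$ really forces about $S/L^2$ disjoint survival windows lying inside $[0,T/2]$, where the comparison of Step 2 is valid. We also need that the per-window survival bound is uniform over the starting point in $[0,L]$; this follows from scaling, since it amounts to a Brownian motion started at $y/L\in[0,1]$ staying positive for unit time.
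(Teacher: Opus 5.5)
Your proof is correct. The paper itself gives no proof of this statement: it is quoted from \cite[Lemma~3.12]{ding2024critical} and used as a black box in the proof of Lemma~\ref{lem:distance_circle_large}. There is therefore no internal argument to compare with, and what you have written is a valid self-contained derivation.

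Each reduction checks out:
\begin{itemize}
\item $Y=2t-W$ is a bridge from $0$ to $x$, and its time reversal shifted by $\alpha\log T$ is a bridge $V$ from $x-\alpha\log T\in[0,L]$ to $-\alpha\log T$.
\item On $\mathcal{F}_{T/2}$ the density of $V$ with respect to Brownian motion is at most $\sqrt2\,e^{x^2/(2T)}\le\sqrt2\,e^{2\beta^2/e^2}$.
\item In the renewal step, occupation $>S$ forces $K:=\#\{k\ge1:\sigma_k\le T/2\}\ge\lfloor S/L^2\rfloor$. Hence $\sigma_k+L^2\le\sigma_{k+1}\le T/2$ for $k\le K-1$.
\item The strong Markov property at $\sigma_1,\dots,\sigma_N$ then gives $(1-c)^{N}\le(1-c)^{-2}e^{-c'S/L^2}$ with $c'=-\log(1-c)$.
\end{itemize}
What your route buys over the bare citation is explicit constants: $c_1=c'/(\beta-\alpha)^2$ with $c'$ universal, and $c_0$ depends only on $\beta$. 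It also makes the uniformity in $x$ and $T$ transparent, which is exactly what the application with $T=n$ needs.

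A few small corrections:
\begin{itemize}
\item The per-window display should be a supremum, not an infimum: you need $\sup_{y\le L}\mathbb{P}_y[B>0\text{ on }[0,L^2]]\le 1-c$. The infimum is $0$ and gives nothing. By scaling, monotonicity in $y$ and the reflection principle, this supremum equals $\mathbb{P}[|N(0,1)|<1]\approx 0.68$.
\item Taking the supremum over all $y\le L$, rather than over $y\in[0,L]$, also removes the need to argue separately that $B_{\sigma_k}\ge 0$.
\item You should state explicitly that $T>1$. For $T<1$ the range $[\alpha\log T,\beta\log T]$ is empty, and for $T=1$ the event is null. Your uniform bound on the Radon--Nikodym derivative uses $\sup_{T\ge1}(\log T)^2/T=4/e^2$.
\item The case $S<2L^2$ needs no separate treatment, since $(1-c)^{\lfloor S/L^2\rfloor-1}\le(1-c)^{-2}(1-c)^{S/L^2}$ holds for every $S\ge 0$.
\end{itemize}
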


Lemma~\ref{lem:brownian_bridge} is applied for $W_t = h_{e^{-t}}(z) - h_1(z)$, $t \in [0,n]$, conditioned on $h_{e^{-n}}(z) - h_1(z)$ (see Steps 2 and 3 in the proof of \cite[Lemma~3.11]{ding2024critical} for more details).

We now give the proof of Lemma~\ref{lem:distance_circle_large}.

\begin{proof}[Proof of Lemma~\ref{lem:distance_circle_large}.]
First, we note that since $\beta \xi_0 > 1+\theta$ and $\theta < \xi_c / 8$, we can choose $\delta>0$ sufficiently small (in a way that depends only on $\beta,\xi_0$, and $\theta$) such that $\beta (\xi_0 - \delta) > 1+\theta$ and $4\theta < \xi_c / 2 -\delta$. Also, we can choose $\alpha \in (0,1/4)$ sufficiently close to $1/4$ (in a way that depends only on $\theta$ and $\delta$) such that $\alpha (\xi_c / 2 - \delta) > \theta$. For the rest of the proof, we will assume that we are working with the above choice of $\alpha$ and $\delta$.

Next, we fix $\xi \in [\xi_0 , \xi_c]$ and for all $n \in \NN, t>0$, and all $z \in (e^{-n-100} \ZZ^2) \cap U$, we consider the event 
\begin{equation*}
    H_n(z,t) = \{h_{e^{-t}}(z) - h_1(z) \geq 2t -\beta \log n\}.
\end{equation*}
Then, by arguing in the exact same way as in Steps 2 and 3 in the proof of \cite[Lemma~3.11]{ding2024critical} (see in particular equation $(3.66)$ in the proof of \cite[Lemma~3.11]{ding2024critical}) and using Lemma~\ref{lem:brownian_bridge}, we obtain that there exists $n_\star \in \NN$ depending only on $\alpha,\beta,\varepsilon$, and $U$, such that the following holds with probability at least $1-\varepsilon/100$. For all $n \in \NN, n\geq n_\star$, and all $z \in (e^{-n-100} \ZZ^2) \cap U$ such that
\begin{equation}\label{eq:circle_big}
h_{e^{-n}}(z) - h_1(z) \geq 2n -\beta \log n,  
\end{equation}
we have that
\begin{equation}\label{eq:event}
h_{e^{-t}}(z) - h_1(z) \leq 2t -\alpha \log n\,\,\text{for all} \,\,\,t \in [n/2,n],\,\,\,\text{and}\,\, \int_{n/2}^n \mathds{1}_{H_n(z,t)} dt \leq (\log n)^4.
\end{equation}

Lemma~\ref{lem:stringing_paths_1} implies that if we choose the functions $b,M$ such that $b(\xi') \sqrt{M(\xi')} > 40(1+\xi_c) / \delta$ for all $\xi' \in [\xi_0,\xi_c]$, we have that there exists a constant $C>0$ (depending only on $\varepsilon,\delta$, and $U$) such that the following is true with probability at least $1-\varepsilon/100$. For all $n \in \NN$ and all $z \in U$, it holds that
\begin{equation}\label{eq:upper_bound}
\widetilde{D}_h^{\xi}(\partial B_{e^{-n-50}}(z),\partial B_{e^{-n/2}}(z)) \leq C \int_{\lfloor n/2 \rfloor}^n \exp((\xi-\delta) (h_{e^{-t}}(z) - 2t)) dt.  
\end{equation}

Therefore, combining \eqref{eq:event} with \eqref{eq:upper_bound}, we get that the following is true with probability at least $1-\varepsilon/50$. For all $n \in \NN, n \geq n_\star$, and all $z \in (e^{-n-100}\ZZ^2) \cap U$ such that \eqref{eq:circle_big} occurs, we have that
\begin{align}\label{eq:many_integrals}
\widetilde{D}_h^{\xi}(\partial B_{e^{-n-50}}(z) , \partial B_{e^{-n/2}}(z))&\leq C \int_{\lfloor n/2 \rfloor}^n \exp((\xi-\delta)(h_{e^{-t}}(z)-2t))\mathds{1}_{H_n(z,t)}dt\nonumber\\
&+C \int_{\lfloor n/2 \rfloor}^n \exp((\xi-\delta)(h_{e^{-t}}(z)-2t))\mathds{1}_{H_n(z,t)^c} dt\nonumber\\
&\leq C \int_{\lfloor n/2 \rfloor}^n \exp((\xi-\delta)(h_1(z) - \alpha \log n)) \mathds{1}_{H_n(z,t)} dt\nonumber\\
&+C \int_{\lfloor n/2 \rfloor}^n \exp((\xi-\delta)(h_1(z)-\beta \log n)) \mathds{1}_{H_n(z,t)^c} dt\nonumber\\
&\leq C \exp\left((\xi_c-\delta) \sup_{w \in U} |h_1(w)| \right) \left(n^{-\alpha (\xi-\delta)} \int_{\lfloor n/2 \rfloor}^n \mathds{1}_{H_n(z,t)} dt + n^{-\beta(\xi-\delta)+1}\right)\nonumber\\
&\leq C \exp\left((\xi_c - \delta) \sup_{w \in U} |h_1(w)|\right) \left(n^{-\alpha (\xi-\delta)} (\log n)^4 + n^{-\beta(\xi-\delta)+1}\right).
\end{align}

Note that the choices of $\alpha$ and $\delta$ imply that $\alpha(\xi-\delta) > \theta$ and $\beta(\xi-\delta) -1> \theta$. Moreover, we have that $\sup_{w \in U}|h_1(w)|$ is finite a.s., and so by possibly taking $C$ to be larger (in a way that depends only on $\varepsilon,\delta,\alpha,\beta$, and $U$) and combining with \eqref{eq:many_integrals}, we obtain that with probability at least $1-\varepsilon/25$, it holds that
\begin{equation}\label{eq:great_bound}
\widetilde{D}_h^{\xi}(\partial B_{e^{-n-50}}(z),\partial B_{e^{-n/2}}(z)) \leq C \left(n^{-\theta} (\log n)^4 + n^{-\theta}\right)
\end{equation}
for all $n \in \NN, n \geq n_\star$, and all $z \in (e^{-n-100} \ZZ^2) \cap U$ for which \eqref{eq:circle_big} holds.

Finally, Proposition~\ref{prop:main_lqg_estimate} implies that there exists $C_1>0$ (depending only on $\varepsilon,\theta,U$, and $n_\star$) such that with probability at least $1-\varepsilon/100$, it holds that
\begin{equation}\label{eq:great_bound_small_n}
\widetilde{D}_h^{\xi}(\partial B_{e^{-n-50}}(z),\partial B_{e^{-n/2}}(z)) \leq C_1 n^{-\theta}\,\,\text{for all}\,\, 1 \leq n \leq n_\star,\,\,z \in (e^{-n-100} \ZZ^2) \cap U.
\end{equation}
Therefore, the proof of the lemma is complete by combining \eqref{eq:great_bound} with \eqref{eq:great_bound_small_n}. 
\end{proof}

Next, we would like to have a version of Lemma~\ref{lem:distance_circle_large} which can be applied for \emph{any} point $z \in U$. This is achieved in Lemma~\ref{lem:large_at_some_t} below.

\begin{lemma}\label{lem:large_at_some_t}
Fix $\theta \in (0,\xi_c / 8)$ and $\beta > (1+\theta) / \xi_c$. Fix also $\xi_0 \in (((1+\theta) / \beta) \vee (\xi_c / 2) , \xi_c)$. Then, there exists a choice of the functions $b,M$ (depending only on $\theta,\beta$, and $\xi_0$) such that the following holds for all $\xi \in [\xi_0,\xi_c]$. Fix $\varepsilon \in (0,1)$ and let $U \subseteq \CC$ be a bounded open set. Then, there exists a deterministic constant $C>0$ depending only on $\theta,\varepsilon,\beta,\xi_0$, and $U$, such that the following is true with probability at least $1-\varepsilon$. For all $n \in \NN$ and all $z \in U$ for which there exists $t \in [n+1,n+2]$ such that
\begin{equation}\label{eq:large_for_some_t}
h_{e^{-t}}(z) - h_1(z) \geq 2t -\beta \log t,   
\end{equation}
we have that
\begin{equation*}
    \widetilde{D}_h^{\xi}(\partial B_{e^{-n-40}}(z) , \partial B_{e^{-n/2 - 1}}(z)) \leq C n^{-\theta}.
\end{equation*}   
\end{lemma}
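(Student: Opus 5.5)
We reduce Lemma~\ref{lem:large_at_some_t} to Lemma~\ref{lem:distance_circle_large} by passing from an arbitrary point $z \in U$ and a continuous time $t \in [n+1,n+2]$ to a nearby grid point and an integer time, in the same way that Lemma~\ref{lem:uniform_circle_ave} transferred estimates from grid points to arbitrary points. Fix $\beta' > \beta$ with $\xi_0 > (1+\theta)/\beta'$; this is possible because $\xi_0 > (1+\theta)/\beta$. We apply Lemma~\ref{lem:distance_circle_large} with $\beta'$ in place of $\beta$ and with $n+1$ in place of $n$. This fixes the choice of $b,M$, and on an event of probability at least $1-\varepsilon/10$ we get the bound $C (n+1)^{-\theta}$ for the distance between $\partial B_{e^{-n-51}}(w)$ and $\partial B_{e^{-(n+1)/2}}(w)$. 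The bound holds for every $w \in (e^{-n-101}\ZZ^2)\cap B_1(U)$ with $h_{e^{-n-1}}(w)-h_1(w) \ge 2(n+1)-\beta'\log(n+1)$.

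Given $z$ and $t$ as in \eqref{eq:large_for_some_t}, pick a grid point $w=z_{n+1}$ with $|z-w|\le e^{-n-100}$. We intersect with the event of Lemma~\ref{lem:main_estimate}, specifically item~\eqref{it:circle_averages_bound}, applied at scale $n+1$. We also intersect with the event that $\sup_{B_1(U)}|h_1|\le C_1$. On this event the following holds:
\begin{equation*}
|h_{e^{-t}}(z)-h_{e^{-n-1}}(w)| \le \log C_0 + \frac{2(n+1)-h_{e^{-n-1}}(w)}{A}.
\end{equation*}
This requires $e^{-t}\ge e^{-n-1-100}$, which holds since $t \le n+2$. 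We also use $|h_1(z)-h_1(w)|\le 2C_1$. Write $x = 2(n+1)-h_{e^{-n-1}}(w)$. Then \eqref{eq:large_for_some_t} gives $x(1-1/A) \le \beta\log t + O(1)$, so $x \le \beta(1+2/A)\log n + O(1)$. Choosing $A$ large, equivalently $b\sqrt{M}$ large, we get $\beta(1+2/A)<\beta'$. Hence $h_{e^{-n-1}}(w)-h_1(w) \ge 2(n+1)-\beta'\log(n+1)$ for all $n \ge n_\star$, where $n_\star$ is deterministic. The finitely many $n< n_\star$ are absorbed into the constant using Proposition~\ref{prop:main_lqg_estimate} together with the fact that distances between fixed circles over a compact family are tight, as in \eqref{eq:great_bound_small_n}. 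Here one should cover $U$ by finitely many grid annuli, or simply bound by the diameter-type quantity obtained from Lemma~\ref{lem:stringing_paths_1}.

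Finally we transfer the distance bound geometrically. Since $|z-w|\le e^{-n-100}$, we have $\partial B_{e^{-n-40}}(z) \subset$ the complement of $B_{e^{-n-51}}(w)$, and $B_{e^{-n/2-1}}(z) \subset B_{e^{-(n+1)/2}}(w)$. So a path from $\partial B_{e^{-n-51}}(w)$ to $\partial B_{e^{-(n+1)/2}}(w)$ does not by itself connect the two circles around $z$. To fix this we also use the around-paths from Lemma~\ref{lem:around_poly}, which cost $O(n^{-\theta})$. The path in $\mathbb{A}_{e^{-n-41},e^{-n-40}}(z)$ disconnects its boundaries, and the path in $\mathbb{A}_{e^{-n/2-2},e^{-n/2-1}}(z)$ (with integer rounding of $n/2$) does the same. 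The crossing path from $\partial B_{e^{-n-51}}(w)$ to $\partial B_{e^{-(n+1)/2}}(w)$ must intersect both around-paths. Concatenating therefore yields a path from $\partial B_{e^{-n-40}}(z)$ to $\partial B_{e^{-n/2-1}}(z)$ of length at most $C n^{-\theta}$.

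The main obstacle is the comparison step, namely converting the condition at $(z,t)$ into the condition at $(w,n+1)$ with only a slightly worse $\beta'$. This relies on the error term $x/A$ being a small multiple of $\log n$, and not of order $n$. That in turn uses the a priori bound $h \le 2t - \alpha\log t + C$ from item~\eqref{it:average_under_control} to ensure $x\ge 0$ up to constants, together with the hypothesis $x \lesssim \log n$.
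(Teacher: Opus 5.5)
Your reduction is the paper's: apply Lemma~\ref{lem:distance_circle_large} with some $\beta'>\beta$ at a nearby grid point, compare circle averages with an error of order $A^{-1}$ times the deficit from $2s$, take $A$ (hence $b\sqrt{M}$) large so that $\beta(1+2/A)<\beta'$, handle $n<n_\star$ via Proposition~\ref{prop:main_lqg_estimate}, and transfer the bound geometrically.

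\textbf{The gap: shifting to scale $n+1$ breaks the comparison step.} Item~\eqref{it:circle_averages_bound} at scale $n+1$ controls $|h_r(z')-h_{e^{-n-1}}(w)|$ only for $z'\in B_{e^{-n-1}-r}(w)$. With $r=e^{-t}$, this ball has radius $e^{-n-1}(1-e^{-(t-n-1)})$. That radius is $0$ at $t=n+1$, and it is smaller than $|z-w|$ (of order $e^{-n-101}$) for all $t$ close enough to $n+1$. So the estimate gives nothing about $h_{e^{-t}}(z)$ there; you checked only the constraint on $r$.

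The remedy is to stay at scale $n$. Take $z_n\in e^{-n-100}\ZZ^2$ with $|z-z_n|\le e^{-n-99}$. Then $e^{-n}-e^{-t}\ge(1-e^{-1})e^{-n}>e^{-n-99}$ for every $t\in[n+1,n+2]$. This is exactly Lemma~\ref{lem:uniform_circle_ave}, which the paper invokes directly to get $h_{e^{-n}}(z_n)-h_1(z_n)\ge 2n-\beta'\log n$ for $n\ge n_\star$. It is also why the hypothesis is phrased with $t\in[n+1,n+2]$ while Lemma~\ref{lem:distance_circle_large} is applied at scale $n$.

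\textbf{The geometric step is misjudged.} You yourself note that $\overline{B_{e^{-n-51}}(w)}\subset B_{e^{-n-40}}(z)$ and $B_{e^{-n/2-1}}(z)\subset B_{e^{-(n+1)/2}}(w)$. These inclusions imply that every path from $\partial B_{e^{-n-51}}(w)$ to $\partial B_{e^{-(n+1)/2}}(w)$ starts inside $B_{e^{-n-40}}(z)$ and ends outside $B_{e^{-n/2-1}}(z)$. Such a path therefore already contains a crossing from $\partial B_{e^{-n-40}}(z)$ to $\partial B_{e^{-n/2-1}}(z)$. The paper concludes in one line from the containment $\mathbb{A}_{e^{-n-40},e^{-n/2-1}}(z)\subseteq\mathbb{A}_{e^{-n-50},e^{-n/2}}(z_n)$. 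Your extra around-paths from Lemma~\ref{lem:around_poly} cost only $O(n^{-\theta})$, so they do no harm. They are unnecessary, though, and the reason you give for needing them is false.
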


\begin{proof}
Fix $\xi \in [\xi_0,\xi_c]$ and $\beta' > \beta$. Then, Lemma~\ref{lem:distance_circle_large} implies that there exists a choice of the functions $b,M$ (depending only on $\theta,\beta$, and $\xi_0$) such that the following holds. There exists a constant $C_1>0$ (depending only on $\theta,\beta',\varepsilon,\xi_0$, and $U$) such that the following is true with probability at least $1-\varepsilon/100$. For all $n \in \NN$ and all $z \in (e^{-n-100} \ZZ^2) \cap B_1(U)$ such that 
\begin{equation}\label{eq:average_large}
    h_{e^{-n}}(z) - h_1(z) \geq 2n -\beta' \log n,
\end{equation}
it holds that
\begin{equation}\label{eq:good_bound_1}
\widetilde{D}_h^{\xi}(\partial B_{e^{-n-50}}(z) , \partial B_{e^{-n/2}}(z)) \leq C_1 n^{-\theta}.
\end{equation}
Moreover, item~\eqref{it:average_under_control} in Lemma~\ref{lem:stringing_paths} implies that there exists a deterministic constant $t_\star>0$ (depending only on $\varepsilon$ and $U$) such that with probability at least $1-\varepsilon/100$, we have that
\begin{equation}\label{eq:good_bound_2}
    h_{e^{-t}}(z) \leq 2t \quad \text{for all} \quad t \geq t_\star,\,\, z \in B_1(U).
\end{equation}

Let $A>1$ be sufficiently large (depending only on $\beta$ and $\beta'$) such that $\beta (1+2/A) < \beta'$. Then, Lemma~\ref{lem:uniform_circle_ave} implies that there exists a constant $C_2>0$ depending only on $\varepsilon,A$ and $U$, such that with probability at least $1-\varepsilon / 100$, the condition in the statement of Lemma~\ref{lem:uniform_circle_ave} holds with $C_2$ in place of $C_1$. Furthermore, there exists a constant $C_3>0$ depending only on $\varepsilon$ and $U$ such that with probability at least $1-\varepsilon/100$,
\begin{equation}\label{eq:good_bound_3}
\sup_{w \in B_1(U)} |h_1(w)| \leq C_3.   
\end{equation}
For the rest of the proof, we will assume that we are working on the event that \eqref{eq:good_bound_1},\eqref{eq:good_bound_2},\eqref{eq:good_bound_3}, and the condition in the statement of Lemma~\ref{lem:uniform_circle_ave} hold. Note that the above event occurs with probability at least $1-\varepsilon/25$.

Now, fix $z \in U$ and $t \in [n+1,n+2]$ such that \eqref{eq:large_for_some_t} holds. Let also $z_n \in (e^{-n-100} \ZZ^2) \cap B_1(U)$ be such that $|z-z_n| \leq e^{-n-99}$. Then, Lemma~\ref{lem:uniform_circle_ave} implies that 
\begin{equation*}
    h_{e^{-n}}(z_n) - h_1(z_n) \geq h_{e^{-t}}(z) - \left(\frac{2(2t-h_{e^{-t}}(z))}{A}\right) -|h_1(z_n)| - C_2.
\end{equation*}
Moreover, by \eqref{eq:good_bound_2} and if we further assume that $t \geq t_\star$, we obtain that
\begin{equation*}
|2t - h_{e^{-t}}(z)| = 2t - h_{e^{-t}}(z) \leq \beta \log t + |h_1(z)|.    
\end{equation*}
It follows that
\begin{align*}
h_{e^{-n}}(z_n) - h_1(z_n) &\geq 2t - \beta \log t -|h_1(z)| - \left(\frac{2(\beta \log t +|h_1(z)|)}{A}\right) -|h_1(z_n)| - C_2\\
&=2t - \beta \left(1+\frac{2}{A}\right) \log t -\left(1+\frac{2}{A}\right) |h_1(z)| - |h_1(z_n)| - C_2\\
&\geq 2n -\beta \left(1+\frac{2}{A}\right) \log n -C_3 \left(2+\frac{2}{A}\right) - C_2 -C_4,  
\end{align*}
where $C_4>0$ depends only on $\beta$ and $A$. Therefore, there exists $n_\star \in \NN, n_\star \geq t_\star$ depending only on $\beta,t_\star,C_2,C_3,C_4,A$, and $\beta'$, such that
\begin{equation*}
    h_{e^{-n}}(z_n) - h_1(z_n) \geq 2n -\beta' \log n.
\end{equation*}
Thus, it follows from combining with \eqref{eq:good_bound_1} that 
\begin{equation*}
\widetilde{D}_h^{\xi}(\partial B_{e^{-n-50}}(z_n) , \partial B_{e^{-n/2}}(z_n)) \leq C_1 n^{-\theta}  
\end{equation*}
for all $n \in \NN, n \geq n_\star$, and all $z \in (e^{-n-100} \ZZ^2) \cap B_1(U)$ for which \eqref{eq:average_large} holds. Since $|z-z_n| \leq e^{-n-99}$, we have that
\begin{equation*}
\mathbb{A}_{e^{-n-40},e^{-n/2-1}}(z) \subseteq \mathbb{A}_{e^{-n-50},e^{-n/2}}(z_n)
\end{equation*}
and so
\begin{equation}\label{eq:good_bound_4}
\widetilde{D}_h^{\xi}(\partial B_{e^{-n-40}}(z) ,\partial B_{e^{-n/2-1}}(z)) \leq \widetilde{D}_h^{\xi}(\partial B_{e^{-n-50}}(z_n) , \partial B_{e^{-n/2}}(z_n)) \leq C_1 n^{-\theta}. 
\end{equation}

Finally, Proposition~\ref{prop:main_lqg_estimate} implies that there exists a constant $\widetilde{C}>0$ depending only on $\varepsilon,\theta$, and $U$ such that with probability at least $1-\varepsilon/100$, we have that
\begin{equation}\label{eq:good_bound_5}
\widetilde{D}_h^{\xi}(\partial B_{e^{-n-50}}(w) , \partial B_{e^{-n/2}}(w)) \leq \widetilde{C} n^{-\theta}\,\,\text{for all}\,\, 1\leq n \leq n_\star,\,\,w \in (e^{-n-100} \ZZ^2) \cap B_1(U).   
\end{equation}
Therefore, the proof of the lemma is complete by combining \eqref{eq:good_bound_4} with \eqref{eq:good_bound_5}.   
\end{proof}

\subsection{Proofs of Theorem~\ref{thm:main_result} and Corollary~\ref{cor:main_result}}
\label{subsec:proofs_of_main_results}

In this subsection, we will complete the proofs of Theorem~\ref{thm:main_result} and Corollary~\ref{cor:main_result}. We start by giving in Lemma~\ref{lem:distance_poly} below a useful estimate on $\widetilde{D}_h^{\xi}(\partial B_{e^{-n-50}}(z),\partial B_{e^{-n/2 - 1}}(z))$ which holds for all $n \in \NN$ and all $z \in U$. It will follow from Lemmas~\ref{lem:distance_circle_small} and ~\ref{lem:large_at_some_t}.

\begin{lemma}\label{lem:distance_poly}
Fix $\theta \in (0,\xi_c / 8)$. Then, there exists $\xi_0 \in (0,\xi_c)$ depending only on $\theta$ and a choice of the functions $b,M$ (depending only on $\theta$) such that the following is true for all $\xi \in [\xi_0,\xi_c]$. Fix $\varepsilon \in (0,1)$ and a bounded open set $U \subseteq \CC$. Then, there exists deterministic constant $C>0$ (depending only on $\varepsilon,\theta$, and $U$) such that with probability at least $1-\varepsilon$, we have that
\begin{equation*}
    \widetilde{D}_h^{\xi}(\partial B_{e^{-n-50}}(z) , \partial B_{e^{-n/2 - 1}}(z)) \leq C n^{-\theta}\,\,\text{for all}\,\, n \in \NN,z \in U.
\end{equation*}
\end{lemma}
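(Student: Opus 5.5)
The plan is to combine Lemma~\ref{lem:distance_circle_small} and Lemma~\ref{lem:large_at_some_t} through a case split on the circle average profile of $z$ on the scales between $n/2$ and $n$.

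\textbf{Choice of parameters.} Given $\theta \in (0,\xi_c/8)$, I fix $\beta > (1+\theta)/\xi_c$, say $\beta = 2(1+\theta)/\xi_c$. I then pick $\xi_0 \in (((1+\theta)/\beta)\vee(\xi_c/2),\xi_c)$, which depends only on $\theta$. Next I choose $b,M$ so that the conclusions of Lemmas~\ref{lem:distance_circle_small} and~\ref{lem:large_at_some_t} both hold. Both lemmas only need $b(\xi)\sqrt{M(\xi)}$ to exceed some threshold depending on $\theta,\beta,\xi_0$, so taking $b\sqrt{M}$ larger than the maximum of the two requirements works. I then intersect the two events, each with probability at least $1-\varepsilon/2$, and let $C_1,C_2$ be the respective constants.

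\textbf{Case split.} Fix $n$ and $z\in U$. For small $n$, say $n\le n_0$ with $n_0$ a fixed constant, I bound the left side uniformly by a constant with high probability, and absorb this into $C$ since $n^{-\theta}\ge n_0^{-\theta}$. For this I cover $U$ by finitely many grid points and use Proposition~\ref{prop:main_lqg_estimate} on slightly enlarged annuli, using monotonicity of distances between nested circles; this is the same device as in \eqref{eq:great_bound_small_n}. For $n>n_0$ I look at the integer scales $k\in[\lceil n/2\rceil+1,n]$ and split into two cases.

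\emph{Case 1.} Suppose $h_{e^{-k}}(z)-h_1(z)\le 2k-\beta\log k$ for all such $k$, and also for $k=n+1$ and $k=n+2$ if needed. Then Lemma~\ref{lem:distance_circle_small} with $m=\lceil n/2\rceil+1$ gives
\[
\widetilde D_h^\xi(\partial B_{e^{-n-50}}(z),\partial B_{e^{-m}}(z))\le C_1 m^{-\theta}\le 2^{\theta}C_1 n^{-\theta}.
\]
Since $e^{-m}\le e^{-n/2-1}$, any path from $\partial B_{e^{-n-50}}(z)$ to $\partial B_{e^{-n/2-1}}(z)$ is no longer than needed to reach $\partial B_{e^{-m}}(z)$ (indeed $\partial B_{e^{-n/2-1}}$ lies between $\partial B_{e^{-m}}$ and $\partial B_{e^{-n-50}}$). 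So the distance to the nearer circle $\partial B_{e^{-n/2-1}}(z)$ is at most the distance to $\partial B_{e^{-m}}(z)$, and the bound follows.

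\emph{Case 2.} Suppose some integer $k$ in this range violates the condition, so $h_{e^{-k}}(z)-h_1(z)>2k-\beta\log k$. I write $k=n'+1$ and apply Lemma~\ref{lem:large_at_some_t} with $n'$ and $t=k\in[n'+1,n'+2]$. This gives
\[
\widetilde D_h^\xi(\partial B_{e^{-n'-40}}(z),\partial B_{e^{-n'/2-1}}(z))\le C_2 (n')^{-\theta}\lesssim n^{-\theta},
\]
since $n'\ge n/2-1$.

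\textbf{Main obstacle.} The hard part is that Case~2 controls only the window $[n'/2+1,n'+40]$ in log-scale, while I need the full window $[n/2+1,n+50]$. I will chain pieces together. I take the largest violating scale $k^\ast\le n+2$. The scales between $k^\ast$ and $n+50$ all satisfy the Case~1 condition, apart from the $O(1)$ extra scales, where I use item~\eqref{it:around_under_control_1} and the circle average bounds giving $O(n^{-\theta})$. So Lemma~\ref{lem:distance_circle_small} bounds the crossing from $\partial B_{e^{-n-50}}$ to $\partial B_{e^{-k^\ast+1}}$. The Case~2 lemma then covers from $e^{-k^\ast-39}$ up to $e^{-k^\ast/2}$. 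I iterate toward larger radii by choosing the next violating scale below $k^\ast/2$. This takes at most two rounds, because $k^\ast/2\le n/2+1$ already reaches the target once $k^\ast\le n+2$.

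The crossings overlap in radius, so concatenation is needed. I glue them using the paths around annuli from Lemma~\ref{lem:around_poly}, which cost $Cn^{-\theta}$ each, exactly as in the stringing argument of Lemma~\ref{lem:stringing_paths}. Each overlap then contributes one extra around-path, and the total is $O(n^{-\theta})$ with constants depending only on $\varepsilon,\theta,U$. This gives the claim after adjusting $C$.
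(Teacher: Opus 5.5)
Your overall strategy is the paper's: split on whether $h_{e^{-t}}(z)-h_1(z)\le 2t-\beta\log t$ on the integer scales between $n/2$ and $n$, take the largest violating scale, use Lemma~\ref{lem:large_at_some_t} just above it and Lemma~\ref{lem:distance_circle_small} below it, and glue with one around-path from Lemma~\ref{lem:around_poly}. However, your index choices break Case~1 as written, and the Case~2 indexing also needs correcting.

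\textbf{Case 1 fails as written.} With $m=\lceil n/2\rceil+1\ge n/2+1$ you have $e^{-n-50}<e^{-m}\le e^{-n/2-1}$. So it is $\partial B_{e^{-m}}(z)$ that lies between the other two circles, not $\partial B_{e^{-n/2-1}}(z)$. Monotonicity then gives
\[
\widetilde D_h^\xi(\partial B_{e^{-n-50}}(z),\partial B_{e^{-m}}(z))\le \widetilde D_h^\xi(\partial B_{e^{-n-50}}(z),\partial B_{e^{-n/2-1}}(z)),
\]
which is the opposite of what you need. For odd $n$ the annulus $\mathbb{A}_{e^{-n/2-3/2},e^{-n/2-1}}(z)$ is never crossed. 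The fix is to run the case split over $[\lceil n/2\rceil,n]_{\ZZ}$ (the paper's window $[n/2,n]_\ZZ$) and take $m=\lceil n/2\rceil$. Then $e^{-m}\ge e^{-n/2-1/2}>e^{-n/2-1}$, so the bound for the outer circle $\partial B_{e^{-m}}(z)$ does dominate.

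\textbf{Case 2 indexing.} Lemma~\ref{lem:distance_circle_small} needs non-violation at every integer scale from $m$ onward. With $k^\ast$ the largest violating scale you can only take $m=k^\ast+1$. This gives a crossing to $\partial B_{e^{-k^\ast-1}}(z)$, not to $\partial B_{e^{-k^\ast+1}}(z)$, since the latter would require the condition at $k^\ast$ itself. The error is harmless: this crossing still overlaps the window from $\partial B_{e^{-k^\ast-39}}(z)$ to $\partial B_{e^{-k^\ast/2-1/2}}(z)$ given by Lemma~\ref{lem:large_at_some_t} at $n'=k^\ast-1$. One around-path in, e.g., $\mathbb{A}_{e^{-k^\ast-20},e^{-k^\ast-19}}(z)$ glues the two.

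No iteration and no scales $n+1,n+2$ are needed. Since $k^\ast\le n$, the outer radius satisfies $e^{-k^\ast/2-1/2}\ge e^{-n/2-1/2}>e^{-n/2-1}$, so a single application already passes the target. This is exactly the paper's one-step argument, where $\tau\in[N+1,N+2]$ and $N/2+1\le n/2$.

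For $k^\ast\in\{n-1,n\}$, Lemma~\ref{lem:distance_circle_small} does not apply because $m\ge n$. There the $O(1)$ missing scales should be crossed using item~\eqref{it:across_under_control_1} of Lemma~\ref{lem:stringing_paths_1} together with $h_{e^{-t}}(z)\le 2t-\alpha\log t+C$, as in the proof of Lemma~\ref{lem:around_poly}. Item~\eqref{it:around_under_control_1} is an around bound and gives no crossing. The paper glosses over this edge case, so your instinct to treat it separately is sound. With these corrections your proof goes through.
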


\begin{proof}
Fix $\beta > (1+\theta) / \xi_c$ and $\xi_0 \in (((1+\theta) / \xi_c) \vee (\xi_c / 2) , \xi_c)$ (both depending only on $\theta$). Then, Lemma~\ref{lem:distance_circle_small} implies that there exists a choice of the functions $b,M$ (depending only on $\beta,\theta$, and $\xi_0$) such that the following holds for all $\xi \in [\xi_0,\xi_c]$. There exists a deterministic constant $C_1>0$ (depending only on $\theta,\varepsilon,\beta,\xi_0$, and $U$) such that with probability at least $1-\varepsilon/100$, we have that
\begin{equation}\label{eq:amazing_bound_1}
\widetilde{D}_h^{\xi}(\partial B_{e^{-n-50}}(z) , \partial B_{e^{-n/2-1}}(z)) \leq C_1 n^{-\theta} \end{equation}
for all $n \in \NN$, and all $z \in U$ such that
\begin{equation*}
h_{e^{-t}}(z) - h_1(z) \leq 2t -\beta \log t \quad \text{for all} \quad t \in [n/2 , n]_{\ZZ}.   
\end{equation*}

Suppose that we are working on the event that \eqref{eq:amazing_bound_1} holds. To complete the proof of the lemma, it suffices to show that \eqref{eq:amazing_bound_1} holds in the case that $h_{e^{-t}}(z) \geq 2t -\beta \log t$ for some $t \in [n/2,n]_{\ZZ}$. Fix $z \in U$ and suppose that we are working on the latter event. Let $\tau$ denote the largest time $t \in [\lfloor n/2 \rfloor , n]_{\ZZ}$ for which $h_{e^{-t}}(z) - h_1(z) \geq 2t -\beta \log t$. Let also $N \in [\lfloor n/2 \rfloor - 1,n-2]_{\ZZ}$ be chosen such that $\tau \in [N+1,N+2]_{\ZZ}$. Note that by the definitions of $\tau$ and $N$, we have that $h_{e^{-t}}(z) - h_1(z) \geq 2t -\beta \log t$ for some $t \in [N+1,N+2]_{\ZZ}$. Thus, Lemma~\ref{lem:large_at_some_t} implies that by possibly modifying the choice of the functions $b,M$ (in a way that depends only on $\theta,\beta$, and $\xi_0$), we get that there exists a deterministic constant $C_2>0$ depending only on $\varepsilon,\theta,\beta,\xi_0$, and $U$, such that with probability at least $1-\varepsilon/100$, we have that
\begin{equation}\label{eq:amazing_bound_2}
\widetilde{D}_h^{\xi}(\partial B_{e^{-N-40}}(z) , \partial B_{e^{-N/2 - 1}}(z)) \leq C_2 N^{-\theta}.  
\end{equation}

Note that the definitions of $\tau$ and $N$ imply that $h_{e^{-t}}(z) - h_1(z) \leq 2t - \beta \log t$ for all $t \in [N+2,n]$. Therefore, Lemma~\ref{lem:distance_circle_small} implies that there exists a deterministic constant $C_3>0$ (depending only on $\theta,\beta,\varepsilon,\xi_0$, and $U$) such that
\begin{equation}\label{eq:amazing_bound_3}
\widetilde{D}_h^{\xi}(\partial B_{e^{-n-50}}(z) , \partial B_{e^{-N-2}}(z)) \leq C_3 N^{-\theta}   \end{equation}
with probability at least $1-\varepsilon/100$. Furthermore, Lemma~\ref{lem:around_poly} implies that there exists a constant $C_4>0$ depending only on $\varepsilon,\theta$, and $U$ such that
\begin{equation}\label{eq:amazing_bound_4}
\widetilde{D}_h^{\xi}(\text{around}\,\,\mathbb{A}_{e^{-N-21},e^{-N-20}}(z)) \leq C_4 N^{-\theta}   
\end{equation}
with probability at least $1-\varepsilon/100$. We note that we can choose the functions $b,M$ so that the statements of Lemmas~\ref{lem:around_poly}, ~\ref{lem:distance_circle_small}, and ~\ref{lem:large_at_some_t} hold simultaneously for the above choice of $b,M$.

Suppose that we are working on the event that \eqref{eq:amazing_bound_1},\eqref{eq:amazing_bound_2},\eqref{eq:amazing_bound_3}, and \eqref{eq:amazing_bound_4} all hold simultaneously and note that the probability of that event is at least $1-\varepsilon/25$. Then, we have by \eqref{eq:amazing_bound_2} that
\begin{equation}\label{eq:amazing_bound_5}
\widetilde{D}_h^{\xi}(\partial B_{e^{-N-40}}(z) , \partial B_{e^{-n/2-1}}(z)) \leq C_2 N^{-\theta}.
\end{equation}

Finally, we note that the union of any path from $\partial B_{e^{-N-40}}(z)$ to $\partial B_{e^{-n/2-1}}(z)$, any path from $\partial B_{e^{-n-50}}(z)$ to $\partial B_{e^{-N-2}}(z)$, and any path in $\mathbb{A}_{e^{-N-21},e^{-N-20}}(z)$ disconnecting the inner and outer boundaries of the latter annulus is connected and contains a path from $\partial B_{e^{-n-50}}(z)$ to $\partial B_{e^{-n/2 - 1}}$. Therefore, the proof of the lemma is complete by combining \eqref{eq:amazing_bound_3}, \eqref{eq:amazing_bound_4}, and \eqref{eq:amazing_bound_5}.    
\end{proof}

It will also be useful to obtain a version of Lemma~\ref{lem:distance_poly} at dyadic scales. This is the purpose of Lemma~\ref{lem:distance_dyadic} below.

\begin{lemma}\label{lem:distance_dyadic}
Fix $\theta \in (0,\xi_c / 8)$. Then, there exists $\xi_0 \in (0,\xi_c)$ and a choice of the functions $b,M$ (both depending only on $\theta$) such that the following is true for all $\xi \in [\xi_0 , \xi_c]$. Fix $\varepsilon \in (0,1)$ and a bounded open set $U \subseteq \CC$. Then, there exists deterministic $C>0$ (depending only on $\varepsilon,\theta$, and $U$) such that with probability at least $1-\varepsilon$, we have that
\begin{equation*}
    \widetilde{D}_h^{\xi}(\partial B_{2^{-2^k}}(z) , \partial B_{2^{-2^{\ell}}}(z)) \leq C 2^{-\theta \ell} \quad \text{for all}\quad z \in U,\,\,\ell,k \in \NN,\,\,\ell <k.
\end{equation*}   
\end{lemma}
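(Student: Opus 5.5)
We deduce Lemma~\ref{lem:distance_dyadic} from Lemma~\ref{lem:distance_poly} by chaining dyadic scales, writing each crossing as a sum of crossings that Lemma~\ref{lem:distance_poly} controls. Take $\xi_0$ and $b,M$ as given by Lemma~\ref{lem:distance_poly} for the same $\theta$, and work on the event of that lemma, which has probability at least $1-\varepsilon/2$ and carries a constant $C_0$. On this event, for every $z\in U$ and $n\in\NN$,
\[
\widetilde{D}_h^{\xi}(\partial B_{e^{-n-50}}(z),\partial B_{e^{-n/2-1}}(z))\le C_0 n^{-\theta}.
\]

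For each $j\ge \ell$, choose $n_j\in\NN$ with $n_j/2+1\le 2^j\log 2$, so that $e^{-n_j/2-1}\ge 2^{-2^j}$, and with $n_j+50\ge 2^{j+1}\log 2$, so that $e^{-n_j-50}\le 2^{-2^{j+1}}$. For instance, $n_j=\lfloor 2^{j+1}\log 2\rfloor-2$ works for $j$ large. Since $2^{j+1}\log 2 - 50\le n_j \le 2^{j+1}\log 2 -2$ is a nonempty range for $j\ge j_0$, with $j_0$ an absolute constant, these choices exist. The annulus $\mathbb{A}_{e^{-n_j-50},e^{-n_j/2-1}}(z)$ then contains $\mathbb{A}_{2^{-2^{j+1}},2^{-2^j}}(z)$. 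Any path crossing the larger annulus contains a subpath crossing the smaller one, which gives
\[
\widetilde{D}_h^{\xi}(\partial B_{2^{-2^{j+1}}}(z),\partial B_{2^{-2^j}}(z))\le C_0 n_j^{-\theta}\le C_0' 2^{-\theta j}.
\]

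Next we sum over $j=\ell,\dots,k-1$. Distances between circles do not satisfy the triangle inequality directly, because the endpoints on the intermediate circle need not match. To handle this, apply Lemma~\ref{lem:around_poly} on an event of probability at least $1-\varepsilon/2$. This lemma gives a loop in $\mathbb{A}_{e^{-m-1},e^{-m}}(z)$ of length at most $C_1 m^{-\theta}$, where we pick $m$ so that the annulus sits strictly inside $\mathbb{A}_{2^{-2^{j+1}},2^{-2^j}}(z)$ and $m\asymp 2^j$. Each such loop intersects the crossing paths at consecutive levels, so concatenating the crossings with these loops yields a connected set joining $\partial B_{2^{-2^k}}(z)$ to $\partial B_{2^{-2^\ell}}(z)$. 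We need the loop annulus to be strictly inside the crossed annulus so that both adjacent crossings hit the loop. To guarantee this, it is simplest to cross slightly enlarged annuli, for example $\mathbb{A}_{2^{-2^{j+1}}e^{-5},\,2^{-2^j}}(z)$, which are still covered by Lemma~\ref{lem:distance_poly} after shifting $n_j$ by a constant, and to place the loop near radius $2^{-2^{j+1}}e^{-2}$.

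The total length is then at most
\[
\sum_{j\ge \ell}(C_0'+C_1')2^{-\theta j}\le C\,2^{-\theta\ell},
\]
since the series is geometric. Finitely many small $j<j_0$ are absorbed into $C$ using the same lemmas with bounded $n$. Combining the two events gives probability at least $1-\varepsilon$. We also need both lemmas to hold with a single choice of $b,M$; this is fine because each lemma only requires $b\sqrt{M}$ to be large enough, so we take $b\sqrt{M}$ larger than all the required thresholds. The main obstacle is this bookkeeping of exponent offsets, namely making the crossing annuli and the around-loops nest correctly, and we handle it by the enlarged-annulus choice above.
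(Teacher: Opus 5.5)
Your proposal is correct for $\ell\ge 2$ and follows the paper's argument: crossings come from Lemma~\ref{lem:distance_poly}, loops from Lemma~\ref{lem:around_poly} are placed where consecutive crossing annuli overlap, and the lengths are summed as a geometric series. The paper chains at the scales $e^{-2^j}$ with $n=2^j$, which is also the version later used in Proposition~\ref{prop:log_modulus}. There, the annuli $\mathbb{A}_{e^{-2^{j+1}-50},e^{-2^j-1}}(z)$ and $\mathbb{A}_{e^{-2^j-50},e^{-2^{j-1}-1}}(z)$ already overlap in a region containing the loop annulus $\mathbb{A}_{e^{-2^j-2},e^{-2^j-1}}(z)$, so the paper needs no shrink-then-enlarge step; your base-$2$ re-indexing works equally well.

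One small correction to your handling of small $j$. The case $\ell=1$ needs the crossing for $j=1$, whose outer radius is $2^{-2}$. This exceeds $e^{-3/2}$, the largest outer radius Lemma~\ref{lem:distance_poly} provides for any $n\in\NN$, so this crossing cannot come from ``the same lemmas with bounded $n$''. Take it instead from item~\eqref{it:across_under_control_1} of Lemma~\ref{lem:stringing_paths_1} with $m=1$, $n=2$. That annulus $\mathbb{A}_{e^{-52},e^{-1}}(z)$ contains your enlarged $\mathbb{A}_{2^{-4}e^{-5},2^{-2}}(z)$. Its bound $C\min_{t\in[1,2]}\exp((\xi-\delta)(h_{e^{-t}}(z)-2t))$ is at most a constant on the event of item~\eqref{it:average_under_control} of Lemma~\ref{lem:stringing_paths}, uniformly in $z\in U$.
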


\begin{proof}
Let $\xi_0 \in (0,\xi_c)$ denote the constant in the statement of Lemma~\ref{lem:distance_poly} and note that the proof of Lemma~\ref{lem:distance_poly} implies that we can choose $\xi_0$ such that $\xi_0 > \xi_c/2$. 

Fix $\xi \in [\xi_0,\xi_c]$. Then, combining Lemmas~\ref{lem:around_poly} and ~\ref{lem:distance_poly}, we obtain that there exists a choice of the functions $b,M$ (depending only on $\theta$ and $\xi_0$) such that with probability at least $1-\varepsilon$, we have that
\begin{equation}\label{eq:across_dyadic}
\widetilde{D}_h^{\xi}(\partial B_{e^{-2^j - 50}}(z) , \partial B_{e^{-2^{j-1}-1}}(z)) \leq C 2^{-\theta j} \quad \text{for all}\quad j \in \NN,\,\,z \in U,    
\end{equation}
and 
\begin{equation}\label{eq:around_dyadic}
\widetilde{D}_h^{\xi}(\text{around}\,\, \mathbb{A}_{e^{-2^j -2},e^{-2^j-1}}(z)) \leq C 2^{-\theta j} \quad \text{for al} \quad j \in \NN,\,\,z \in U.  
\end{equation}
For the remaining of the proof, we will assume that we are working on the event that both \eqref{eq:across_dyadic} and \eqref{eq:around_dyadic} hold.

Note that for all $j \in \NN$, the union of any path from $\partial B_{e^{-2^{j+1}-50}}(z)$ to $\partial B_{e^{-2^j - 1}}(z)$, any path from $\partial B_{e^{-2^j -50}}(z)$ to $\partial B_{e^{-2^{j-1}-1}}(z)$, and any path in $\mathbb{A}_{e^{-2^j -2},e^{-2^j-1}}(z)$ which disconnects the inner from the outer boundaries of $\mathbb{A}_{e^{-2^j -2},e^{-2^j-1}}(z)$ is connected. Therefore, if we consider paths which attain the minimal $\widetilde{D}_h^{\xi}$-distances in \eqref{eq:across_dyadic} and \eqref{eq:around_dyadic} for $j=\ell-1,\cdots,k$, then the union of these paths contains a path from $\partial B_{e^{-2^k}}(z)$ to $\partial B_{e^{-2^{\ell}}}(z)$ whose $\widetilde{D}_h^{\xi}$-length is at most 
\begin{equation*}
    2C \sum_{j=\ell-1}^k 2^{-\theta j} \leq 2C \sum_{j=\ell-1}^{\infty} 2^{-\theta j} = \left(\frac{2C}{1-2^{-\theta}}\right) 2^{-\theta (\ell-1)}.
\end{equation*}
This completes the proof of the lemma.   
\end{proof}

Next, we combine Lemmas~\ref{lem:around_poly} and ~\ref{lem:distance_dyadic} to prove that the following stronger version of Proposition~\ref{prop:quantitative_estimate} holds.

\begin{proposition}\label{prop:log_modulus}
Fix $\theta \in (0,\xi_c / 8)$. Then, there exists $\xi_0 \in (0,\xi_c)$ and a choice of the functions $b,M$ (depending only on $\theta$) such that the following holds for all $\xi \in [\xi_0,\xi_c]$. Fix $\varepsilon \in (0,1)$ and let $U \subseteq \CC$ be an open and connected set. Let also $K \subseteq U$ be a compact and connected set with more than one point. Then, there exists a deterministic constant $C>0$ (depending only on $\varepsilon,\theta,K$, and $U$) such that
\begin{equation*}
    \widetilde{D}_h^{\xi}(z,w ; U) \leq C \left(\max\left\{1,\log\left(\frac{1}{|z-w|}\right)\right\}\right)^{-\theta}\quad \text{for all} \quad z,w \in K
\end{equation*}
with probability at least $1-\varepsilon$.
\end{proposition}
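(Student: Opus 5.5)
We combine Lemma~\ref{lem:distance_dyadic} with Lemma~\ref{lem:around_poly}, in the spirit of the proof of \cite[Theorem~1.7]{ding2024critical}. Choose $\xi_0$ and the functions $b,M$ so that Lemmas~\ref{lem:around_poly} and \ref{lem:distance_dyadic} hold simultaneously for all $\xi\in[\xi_0,\xi_c]$, both applied with the bounded open set $U' := B_{r_0}(K)$. Here $r_0>0$ is small enough that $\overline{B_{4r_0}(K)}\subseteq U$. We work on the event, of probability at least $1-\varepsilon/2$, that both conclusions hold with a common constant $C$.

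\textbf{Close pairs.} Fix $z,w\in K$ with $|z-w|\le \delta_0$, where $\delta_0$ is small depending on $r_0$. Let $\ell\in\NN$ be the largest integer with $2^{-2^{\ell}} \ge 4|z-w|$. Then $2^{\ell}\asymp \log(1/|z-w|)$, so $2^{-\theta\ell}\lesssim (\log(1/|z-w|))^{-\theta}$.

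\begin{enumerate}
\item Pick $n\in\NN$ with $\mathbb{A}_{e^{-n-1},e^{-n}}(z)$ contained in $\mathbb{A}_{2|z-w|,\,2^{-2^{\ell}}}(z)$, so that $n\asymp 2^{\ell}$. This is possible after adjusting $\ell$ by a bounded amount, since the annulus only needs ratio $e$.
\item A path $\pi$ around this annulus has length at most $Cn^{-\theta}\lesssim 2^{-\theta\ell}$ by Lemma~\ref{lem:around_poly}.
\item $\pi$ separates both $z$ and $w$ from $\partial B_{2^{-2^{\ell}}}(z)$, since $|z-w|$ is smaller than the inner radius.
\item For each $k>\ell$, Lemma~\ref{lem:distance_dyadic} gives a path from $\partial B_{2^{-2^k}}(z)$ to $\partial B_{2^{-2^{\ell}}}(z)$ of length at most $C2^{-\theta\ell}$. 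This path crosses $\pi$. The same holds with $w$ in place of $z$, after replacing $\ell$ by $\ell+1$ so that $B_{2^{-2^{\ell+1}}}(w)\subseteq B_{e^{-n-1}}(z)$; these balls are nested for $|z-w|$ small relative to $2^{-2^\ell}$.
\item By the triangle inequality,
\begin{equation*}
\widetilde D_h^\xi\bigl(\partial B_{2^{-2^k}}(z),\partial B_{2^{-2^k}}(w)\bigr)\lesssim 2^{-\theta\ell},
\end{equation*}
with all paths contained in $B_{2^{-2^{\ell}}+|z-w|}(z)\subseteq U$. So this bound holds for the internal metric $\widetilde D_h^\xi(\cdot,\cdot;U)$.
\end{enumerate}

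\textbf{Limit $k\to\infty$.} Lemma~\ref{lem:distance_dyadic} with $\ell=k_0$ and $k\to\infty$ shows that $\widetilde D_h^{\xi}(\partial B_{2^{-2^k}}(z),\,\cdot\,)$ shrinks to the point $z$. Concretely, for $x\in\partial B_{2^{-2^k}}(z)$, the internal distance to $z$ is bounded by concatenating crossings of consecutive dyadic annuli glued by around-paths from Lemma~\ref{lem:around_poly}. This gives a distance at most $C2^{-\theta k}$ between $z$ and some point of $\partial B_{2^{-2^k}}(z)$, together with a connected set of small diameter containing $z$. Combining this with lower semicontinuity and the length-space property, letting $k\to\infty$ yields
\begin{equation*}
\widetilde D_h^\xi(z,w;U)\lesssim 2^{-\theta\ell}.
\end{equation*}

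\textbf{Far pairs.} For $|z-w|>\delta_0$, cover $K$ by finitely many balls of radius $\delta_0/2$, deterministically given $K$ and $r_0$. Chain consecutive centers using connectivity of $K$. Each step is bounded by the close-pair estimate, or by Proposition~\ref{prop:main_lqg_estimate} applied to fixed compact connected sets, with probability at least $1-\varepsilon/2$ and a constant depending only on $\varepsilon$ and $K$. Since $\max\{1,\log(1/|z-w|)\}$ is bounded below, this gives the claim with a larger $C$.

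\textbf{Main obstacle.} The main difficulty is passing from distances between circles to the point-to-point distance $\widetilde D_h^\xi(z,w;U)$. This requires carefully nesting the around-paths so that every piece is glued into a connected set reaching exactly $z$ and $w$. It also requires checking that all constants are uniform in $\xi\in[\xi_0,\xi_c]$, which they are because only Lemmas~\ref{lem:around_poly} and \ref{lem:distance_dyadic} are invoked.
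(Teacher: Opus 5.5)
Your proposal is correct and follows essentially the same route as the paper. For close pairs, you glue the across-bounds of Lemma~\ref{lem:distance_dyadic} at $z$ and at $w$ with one around-path from Lemma~\ref{lem:around_poly} and pass to the limit by lower semicontinuity; the paper instead first sends $k\to\infty$ to get $\widetilde D_h^{\xi}(z,\partial B_{e^{-2^{\ell}}}(z))\le C2^{-\theta\ell}$ and uses the annulus $\mathbb{A}_{e^{-2^{\ell}-2},e^{-2^{\ell}-1}}(z)$. For far pairs, you chain through a finite cover exactly as the paper does. One slip: in your nesting remark for $w$, the condition $B_{2^{-2^{\ell+1}}}(w)\subseteq B_{e^{-n-1}}(z)$ is not what is needed. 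You need the inner circle $\partial B_{2^{-2^k}}(w)$ inside $B_{e^{-n-1}}(z)$ (true for $k$ large) and the outer circle $\partial B_{2^{-2^{\ell}}}(w)$ outside $B_{e^{-n}}(z)$, which holds once you choose $n$ with $e^{-n}\le 2^{-2^{\ell}}-|z-w|$.
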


\begin{proof}
Let $\xi_0 \in (\xi_c / 2,\xi_c)$ and the choice of the functions $b,M$ be such that the statements of Lemmas~\ref{lem:around_poly} and ~\ref{lem:distance_dyadic} hold and fix $\xi \in [\xi_0,\xi_c]$. Let also $V \subseteq U$ be a bounded open and connected set such that $\overline{V} \subseteq U$ and $K \subseteq V$. Fix also
\begin{equation*}
    r \in \left(0,\left(\frac{\text{dist}(V , \partial U)}{100}\right)^4 \wedge e^{-100}\right).
\end{equation*}
Then, combining Lemmas~\ref{lem:around_poly} and ~\ref{lem:distance_dyadic} gives that there exists a constant $C>0$ (depending only on $\varepsilon,\theta$, and $V$) such that with probability at least $1-\varepsilon$, we have that
\begin{equation}\label{eq:around_dyadic_1}
\widetilde{D}_h^{\xi}(\text{around}\,\, \mathbb{A}_{e^{-2^{\ell}-2},e^{-2^{\ell}-1}}(z)) \leq C 2^{-\theta \ell} \quad \text{for all} \quad \ell \in \NN,\,\,z \in V,   
\end{equation}
and
\begin{equation}\label{eq:across_dyadic_1}
\widetilde{D}_h^{\xi}(\partial B_{e^{-2^k}}(z) , \partial B_{e^{-2^{\ell}}}(z)) \leq C 2^{-\theta \ell} \quad \text{for all} \quad z \in V,\,\,k,\ell \in \NN\,\,\text{with}\,\,\ell < k.
\end{equation}
Note that sending $k \to \infty$ in \eqref{eq:across_dyadic_1} gives that
\begin{equation}\label{eq:across_dyadic_2}
\widetilde{D}_h^{\xi}(z,\partial B_{e^{-2^{\ell}}}(z)) \leq C 2^{-\theta \ell} \quad \text{for all} \quad \ell \in \NN,\,\, z \in V.   
\end{equation}

Suppose that we are working on the event that both \eqref{eq:around_dyadic_1} and \eqref{eq:across_dyadic_1} hold. Let $x_1,\cdots,x_N \in \CC$ be such that $V \subseteq \cup_{j=1}^N B_{r/2}(x_j)$ and $V \cap B_{r/2}(x_j) \neq \emptyset$ for all $j=1,\cdots,N$. Let $z,w \in V$ be such that $|z-w| \leq r$. Let also $\ell=\ell(z,w) \in \NN$ be such that $|z-w| \in [e^{-2^{\ell+2}},e^{-2^{\ell+1}})$. 

Note that $w \in B_{e^{-2^{\ell}-2}}(z)$ and $B_{e^{-2^{\ell}-1}}(z) \subseteq B_{e^{-2^{\ell}}}(w)$ since $|z-w| < e^{-2^{\ell+1}} \leq e^{-2^{\ell}-2}$. Therefore, the union of any path from $w$ to $\partial B_{e^{-2^{\ell}}}(w)$, any path from $z$ to $\partial B_{e^{-2^{\ell}}}(z)$, and any path in $\mathbb{A}_{e^{-2^{\ell}-2},e^{-2^{\ell}-1}}(z)$ which disconnects the inner and outer boundaries of the latter annulus is connected. Note also that since $|z-w| \leq r < (\text{dist}(V , \partial U))^4$, the choice of $\ell$ implies that 
\begin{equation*}
e^{-2^{\ell}} \leq |z-w|^{1/4} < \text{dist}(V , \partial U).
\end{equation*}
In particular, any path from $w$ to $\partial B_{e^{-2^{\ell}}}(w)$, any path from $z$ to $\partial B_{e^{-2^{\ell}}}(z)$, and any path in $\mathbb{A}_{e^{-2^{\ell}-2},e^{-2^{\ell}-1}}(z)$ disconecting the inner from the outer boundaries of the latter annulus, have to be all contained in $U$.

Hence, we obtain by combining \eqref{eq:around_dyadic_1} with \eqref{eq:across_dyadic_2} that 
\begin{equation}\label{eq:log_modulus_2}
 \widetilde{D}_h^{\xi}(z,w ; U) \leq 3C 2^{-\theta \ell} \leq \widetilde{C} (\log(1/|z-w|))^{-\theta},  
\end{equation}
where $\widetilde{C}>0$ is a deterministic constant that depends only on $C$ and $\theta$.

Finally, to complete the proof of the proposition, we treat the case that $z,w \in K$ and $|z-w| > r$. Let $P$ be a path in $V$ connecting $z$ to $w$ and let $B_1,\cdots,B_N$ denote the Euclidean balls in $\{B_{r/2}(x_j)\}_{j=1}^N$ that $P$ intersects in chronological order. Note that $z \in B_1$ and for all $j \in \{2,\cdots,n\}$, there exists $z_j \in B_{j-1} \cap B_j$. Let $j_0 \in \{2,\cdots,n\}$ be such that $w \in B_{j_0}$. Then, we have that
\begin{align*}
\widetilde{D}_h^{\xi}(z,w) &\leq \widetilde{D}_h^{\xi}(z,z_1) + \widetilde{D}_h^{\xi}(z_{j_0},w) + \sum_{j=1}^{j_0-1} \widetilde{D}_h^{\xi}(z_j,z_{j+1})\\
&\leq \widetilde{C} \left(\left(\log\left(\frac{1}{|z-z_1|}\right)\right)^{-\theta} + \left(\log\left(\frac{1}{|z_{j_0}-w|}\right)\right)^{-\theta} + \sum_{j=1}^{j_0-1} \left(\log\left(\frac{1}{|z_j-z_{j+1}|}\right)\right)^{-\theta}\right)\\
&\leq \widetilde{C} N \left(\max\left\{1,\log\left(\frac{1}{|z-w|}\right)\right\}\right)^{-\theta}.   
\end{align*}
This completes the proof of the proposition.  
\end{proof}

Next, we fix a sequence $(\xi_n)_{n \in \NN}$ in $(0,\xi_c)$ such that $\xi_n \to \xi_c$ as $n \to \infty$. Using Proposition~\ref{prop:log_modulus}, we will show in Proposition~\ref{prop:tightness} below that for any open and connected set $U \subseteq \CC$, the collection of the laws of the random metrics $\{\widetilde{D}_h^{\xi_n}(\cdot , \cdot ; U)\}_{n \in \NN}$ is tight with respect to the local uniform topology on $U \times U$.

\begin{proposition}\label{prop:tightness}
There is a choice of the functions $b,M$ (which is independent of the sequence $(\xi_n)_{n \in \NN}$) such that the following holds. Let $U \subseteq \CC$ be an open and connected set (including $\CC$). Then, the collection of laws of the random internal metrics $\{\widetilde{D}_h^{\xi_n}(\cdot,\cdot ; U)\}_{n \in \NN}$ on $U \times U$ is tight with respect to the local uniform topology on $U \times U$.   
\end{proposition}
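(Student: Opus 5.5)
The plan is to derive tightness from an Arzelà--Ascoli argument, using Proposition~\ref{prop:log_modulus} as a uniform modulus of continuity. First I fix the functions $b,M$ as in Proposition~\ref{prop:log_modulus} for some $\theta \in (0,\xi_c/8)$, say $\theta = \xi_c/16$. This choice depends only on $\theta$, so it is independent of $(\xi_n)$. With it I get $\xi_0 \in (\xi_c/2,\xi_c)$. Since $\xi_n \to \xi_c$, only finitely many $n$ have $\xi_n < \xi_0$. For each such $n$ the single metric $\widetilde{D}_h^{\xi_n}(\cdot,\cdot;U)$ is a.s.\ continuous, because $\xi_n < \xi_c$ is subcritical. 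Its law is therefore a tight measure on the Polish space of continuous functions on $U\times U$. A finite family of tight laws is tight, so it suffices to treat the $n$ with $\xi_n \in [\xi_0,\xi_c)$.

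Next I exhaust $U$ by compact connected sets $K_1 \subseteq K_2 \subseteq \cdots$, each with nonempty interior, such that every compact subset of $U$ lies in some $K_j$. Local uniform tightness on $U\times U$ reduces to showing the following for each $j$ and each $\varepsilon>0$: there is a compact set $\mathcal{K}_j$ of continuous functions on $K_j\times K_j$ that contains the restriction of $\widetilde{D}_h^{\xi_n}(\cdot,\cdot;U)$ with probability at least $1-\varepsilon 2^{-j}$, uniformly in $n$. A diagonal argument then gives a compact set in the local uniform topology.

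For fixed $j$, I apply Proposition~\ref{prop:log_modulus} with $K=K_j$. With probability at least $1-\varepsilon 2^{-j}$ I get
\begin{equation*}
\widetilde{D}_h^{\xi_n}(z,w;U)\le C_j\,\omega(|z-w|)\quad\text{for all } z,w\in K_j,\qquad \omega(s):=\left(\max\{1,\log(1/s)\}\right)^{-\theta}.
\end{equation*}
Here $C_j$ does not depend on $n$. By the triangle inequality for the internal metric,
\begin{equation*}
|\widetilde{D}_h^{\xi_n}(z,w;U)-\widetilde{D}_h^{\xi_n}(z',w';U)|\le C_j\bigl(\omega(|z-z'|)+\omega(|w-w'|)\bigr).
\end{equation*}
Since $\omega(s)\to 0$ as $s\to0$, this is a uniform modulus of continuity on $K_j\times K_j$. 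The same bound also gives uniform boundedness, since $\widetilde{D}_h^{\xi_n}(z,w;U)\le C_j$ and the diagonal values are $0$. The set $\mathcal{K}_j$ of functions that vanish on the diagonal, are bounded by $C_j$, and have this modulus is compact by Arzelà--Ascoli, which gives the required bound.

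The main subtlety is the quantifier order in Proposition~\ref{prop:log_modulus}. The constant must be uniform over all $\xi\in[\xi_0,\xi_c]$, and the proposition does state this. I also need the internal metric to be continuous, not merely lower semicontinuous, so that the event makes sense in the space of continuous functions. This follows from the displayed modulus, since a function satisfying it is automatically continuous on $K_j\times K_j$. For the finitely many small $n$, continuity comes from the subcritical theory.
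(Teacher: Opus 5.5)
Your proposal is correct and follows essentially the same route as the paper: fix $b,M$ via Proposition~\ref{prop:log_modulus} with $\theta=\xi_c/16$, and exhaust $U$ by compact connected $K_j$ with failure probabilities $\varepsilon 2^{-j}$. You then turn the logarithmic modulus into equicontinuity via the triangle inequality and conclude by Arzel\`a--Ascoli. The only differences are minor: you handle the finitely many $n$ with $\xi_n<\xi_0$ by tightness of a single law on a Polish space rather than by citing \cite[Proposition~3.12]{kavvadias2025continuity}, and your compact set explicitly requires vanishing on the diagonal and the bound $C_j$, which Arzel\`a--Ascoli needs and the paper's set $\mathcal{K}$ leaves implicit.
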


\begin{proof}
Let $\xi_0 \in (0,\xi_c)$ be the constant in Proposition~\ref{prop:log_modulus} which corresponds to $\theta = \xi_c / 16$. Then, we choose the functions $b,M$ as in the statement of Proposition~\ref{prop:log_modulus} corresponding to the above choice of $\theta$.

Fix $\varepsilon \in (0,1)$ and let $(K_n)_{n \in \NN}$ be a sequence of compact and connected subsets of $U$ such that $K_m \subseteq K_{m+1}$ for all $m \in \NN$ and $U = \cup_{m \in \NN} K_m$. Then, Proposition~\ref{prop:log_modulus} implies that for all $n \in \NN$, there exists a constant $R_n >0$ depending only on $\varepsilon,n,K_n$, and $U$ such that
\begin{equation}\label{eq:tightness}
\mathbb{P}\left[\sup_{z,w \in K_n,z\neq w} \frac{\widetilde{D}_h^{\xi_m}(z,w;U)}{\left(\max\left\{1,\log(1/|z-w|)\right\}\right)^{-\theta}}\geq R_n\right] \leq \varepsilon/2^n
\end{equation}
for all $m \in \NN$ such that $\xi_m \geq \xi_0$.

Let $\mathcal{K}$ denote the set of functions $f$ on $U \times U \rightarrow \RR$ such that
\begin{equation*}
|f(z,w) - f(z',w')| \leq 2R_n \left(\max\left\{1,\log\left(\frac{1}{|z-z'| + |w-w'|}\right)\right\}\right)^{-\theta}  
\end{equation*}
for all $n \in \NN$ and all $(z,w),(z',w') \in K_n \times K_n$.

Note that if
\begin{equation*}
\sup_{z,w \in K_n,z\neq w} \frac{\widetilde{D}_h^{\xi_m}(z,w;U)}{\left(\max\left\{1,\log(1/|z-w|)\right\}\right)^{-\theta}}\leq R_n    
\end{equation*}
for some $n,m \in \NN$, we have that
\begin{align*}
|\widetilde{D}_h^{\xi_m}(z,w;U) - \widetilde{D}_h^{\xi_m}(z',w';U)|&\leq |\widetilde{D}_h^{\xi_m}(z,w;U) - \widetilde{D}^{\xi_m}(z',w;U)|+|\widetilde{D}_h^{\xi_m}(z',w;U) - \widetilde{D}_h^{\xi_m}(z',w';U)|\\  
&\leq \widetilde{D}_h^{\xi_m}(z,z';U) + \widetilde{D}_h^{\xi_m}(w,w';U)\\
&\leq R_n \left(\left(\max\left\{1,\log\left(\frac{1}{|z-z'|}\right)\right\}\right)^{-\theta} + \left(\max\left\{1,\log\left(\frac{1}{|w-w'|}\right)\right\}\right)^{-\theta}\right)\\
&\leq 2R_n \left(\max\left\{1,\log\left(\frac{1}{|z-z'| + |w-w'|}\right)\right\}\right)^{-\theta}.
\end{align*}

Thus, combining with \eqref{eq:tightness}, we obtain that
\begin{equation*}
  \mathbb{P}[\widetilde{D}_h^{\xi_m}(\cdot,\cdot;U) \in \mathcal{K}] \geq 1-\varepsilon \quad \text{for all} \quad m \in \NN \quad \text{with} \quad \xi_m \geq \xi_0.
\end{equation*}
Moreover, let $m_0 \in \NN$ be such that $\xi_m \geq \xi_0$ for all $m \geq m_0$. Then, \cite[Proposition~3.12]{kavvadias2025continuity} implies that for all $n \in \NN$, by possibly taking $R_n$ to be larger (in a way that depends only on $\varepsilon,n,K_n,U$, and the sequence $(\xi_m)_{m \in \NN}$), we have that \eqref{eq:tightness} holds for all $1 \leq m \leq m_0$ for that choice of $R_n$. In particular, we have that
\begin{equation*}
    \mathbb{P}[\widetilde{D}_h^{\xi_m}(\cdot,\cdot ; U) \in \mathcal{K}] \geq 1-\varepsilon \quad \text{for all} \quad m \in \NN.
\end{equation*}

Note that the Arz\'ela-Ascoli theorem implies that the set $\mathcal{K}$ is compact with respect to the local uniform topology of real-valued functions on $U \times U$. Therefore, we obtain that $\{\widetilde{D}_h^{\xi_n}(\cdot,\cdot;U)\}_{n \in \NN}$ is tight with respect to the local uniform topology of functions on $U \times U$, since $\varepsilon \in (0,1)$ was arbitrary. This completes the proof of the proposition.   
\end{proof}

Now, combining Propositions~\ref{prop:main_lqg_estimate} and ~\ref{prop:tightness}, we prove in Proposition~\ref{prop:conv_in_prob} below an analog of Theorem~\ref{thm:main_result} with the metrics $(\widetilde{D}_h^{\xi_n})_{n \in \NN}$ in place of $(\widehat{D}_h^{\xi_n})_{n \in \NN}$.

\begin{proposition}\label{prop:conv_in_prob}
Suppose that the functions $b,M$ are chosen as in Proposition~\ref{prop:tightness}. Let $(\xi_n)_{n \in \NN}$ be a sequence in $(0,\xi_c)$ such that $\xi_n \to \xi_c$ as $n \to \infty$. Then, the sequence of metrics $(\widetilde{D}_h^{\xi_n})_{n \in \NN}$ is tight with respect to the local uniform topology on $\CC \times \CC$. Moreover, if $(\xi_{k_n})_{n \in \NN}$ is a subsequence of $(\xi_n)_{n \in \NN}$ such that $(\widetilde{D}_h^{\xi_{k_n}})_{n \in \NN}$ converges in law (with respect to the local uniform topology on $\CC \times \CC$) to some random metric on $\CC \times \CC$, then we have that the following is true. There exists a $\xi_c$-LQG metric $h \mapsto \widetilde{D}_h$ and a subsequence $(\xi_{k'_n})_{n \in \NN}$ of $(\xi_{k_n})_{n \in \NN}$ such that
\begin{equation*}
    \widetilde{D}_h^{\xi_{k'_n}} \to \widetilde{D}_h \quad \text{in probability} \quad \text{as} \quad n \to \infty.
\end{equation*}    
\end{proposition}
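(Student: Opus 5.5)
The first assertion (tightness on $\CC\times\CC$) is immediate: we apply Proposition~\ref{prop:tightness} with $U=\CC$. The internal metric $\widetilde{D}_h^{\xi_n}(\cdot,\cdot;\CC)$ coincides with $\widetilde{D}_h^{\xi_n}$, since each $\widetilde{D}_h^{\xi_n}$ is a length metric by Axiom~\eqref{it:length_space}. So the content lies in the second assertion.

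Fix a subsequence $(\xi_{k_n})$ along which $\widetilde{D}_h^{\xi_{k_n}}\to\widetilde{D}$ in law in the local uniform topology. Such a limit is a.s. a continuous function on $\CC\times\CC$. It is symmetric, satisfies the triangle inequality, and vanishes on the diagonal, because these properties pass to local uniform limits and we may realize the convergence a.s. by Skorokhod. Local uniform convergence of continuous functions also implies convergence in the sense of Definition~\ref{def:lower_semi_cont}: condition (1) holds with equality by uniform continuity on compacts, and for condition (2) we take $(z_n,w_n)=(z,w)$. Hence $\widetilde{D}_h^{\xi_{k_n}}\to\widetilde{D}$ in law also in the lower semicontinuous topology. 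Since $\xi_{k_n}\to\xi_c$, Theorem~\ref{thm:tightness_lqg_metrics} applies with $\xi=\xi_c$, and the a.s. triangle inequality for $\widetilde{D}$ is exactly its extra hypothesis. It yields a $\xi_c$-LQG metric $h\mapsto\widetilde{D}_h$ with $\widetilde{D}_h^{\xi_{k_n}}\to\widetilde{D}_h$ in probability in the lower semicontinuous topology.

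It remains to upgrade the convergence in probability to the local uniform topology, possibly along a further subsequence. First, the law of $\widetilde{D}_h$ equals the law of $\widetilde{D}$: convergence in probability in the lower semicontinuous topology implies convergence in law there, and that topology is metrizable and separable (Section~\ref{subsec:overview}), so limits in law are unique. Consequently $\widetilde{D}_h$ is a.s. continuous. Next we consider the pairs $(\widetilde{D}_h^{\xi_{k_n}},\widetilde{D}_h)$. By the tightness already established, they are tight in the product of the local uniform topologies. Take any subsequential limit in law $(X,Y)$. In the lower semicontinuous topology the pair converges in probability to the diagonal, so $X=Y$ a.s.; here we use that the local uniform topology is finer, so the projections agree, and that the diagonal is closed. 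Thus $\widetilde{D}_h^{\xi_{k_n}}-\widetilde{D}_h\to0$ in law, hence in probability, in the local uniform topology. We can take the subsequence $\xi_{k'_n}$ to be the one extracted from the tightness argument, or even the full $(\xi_{k_n})$, since every subsequential limit is the same.

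The main obstacle I expect is justifying the topological comparisons carefully: that the local uniform topology on continuous functions is finer than the topology of Definition~\ref{def:lower_semi_cont}, and that the functional "distance to the diagonal" behaves well on the product space. For the pair argument I would first invoke Skorokhod representation of the jointly convergent subsequence, realizing a.s. convergence in both topologies simultaneously. Uniqueness of pointwise limits then gives $X=Y$ at every point, so no abstract measurability issues arise. Finiteness of the distances is automatic from Proposition~\ref{prop:log_modulus}.
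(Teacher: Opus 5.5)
Your proposal is correct, but it takes a different route from the paper's written proof.

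\textbf{The paper's route.} The paper does not use Theorem~\ref{thm:tightness_lqg_metrics} as a black box. After tightness, it uses Skorokhod to couple fields and metrics so that $(h^n,\widetilde{D}_{h^n}^{\xi_{k_n}})\to(h,D)$ a.s.\ in $H^{-1}_{\mathrm{loc}}(\CC)$ times the local uniform topology. It then appeals to the \emph{proof} of \cite[Theorem~8.19]{kavvadias2025continuity}, through the coupling of \cite[Proposition~7.3]{kavvadias2025continuity}, to identify $D=\widetilde{D}_h$ as a measurable function of the limiting field. Finally it applies the general principle of \cite[Lemma~10.2]{kavvadias2025continuity} (see \cite[Lemma~4.5]{SS13}): joint convergence in law of a field together with a function of that field upgrades to convergence in probability. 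This gives convergence in probability directly in the local uniform topology.

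\textbf{Your route.} You use only the statement of Theorem~\ref{thm:tightness_lqg_metrics}, which gives convergence in probability in the coarser lower semicontinuous topology. You then upgrade by a soft argument with three ingredients:
\begin{enumerate}
\item tightness in the finer Polish topology;
\item continuity of the inclusion of $C(\CC\times\CC)$ into the space of lower semicontinuous functions;
\item the fact that every joint subsequential limit $(X,Y)$ of $(\widetilde{D}_h^{\xi_{k_n}},\widetilde{D}_h)$ must lie on the diagonal.
\end{enumerate}
This is exactly the reduction sketched in the paper's outline (Section~\ref{subsec:outline}), with the upgrade step made explicit.

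\textbf{Comparison.} Your argument is more modular and avoids reopening the proof of \cite[Theorem~8.19]{kavvadias2025continuity}. It also yields convergence along all of $(\xi_{k_n})$ rather than a further subsequence. The price is the bookkeeping you flagged. First, you must check that $\widetilde{D}_h$ is a.s.\ continuous with the same law as $\widetilde{D}$. Second, you must check that the Borel structures of the two topologies agree on $C(\CC\times\CC)$; Lusin--Souslin handles this, since the inclusion is a continuous injection between Polish spaces. Continuity of $\widetilde{D}_h$ could alternatively be obtained by passing the uniform modulus bound of Proposition~\ref{prop:log_modulus} through condition (2) of Definition~\ref{def:lower_semi_cont}. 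The paper's route sidesteps these comparisons but leans on internals of the earlier work.
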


\begin{proof}
First, we note that Proposition~\ref{prop:tightness} (applied with $U = \CC$) implies that the sequence of random metrics $(\widetilde{D}_h^{\xi_n})_{n \in \NN}$ is tight with respect to the local uniform topology of functions on $\CC \times \CC$, whenever $h$ is a whole-plane GFF normalized such that $h_1(0) = 0$. 

Let $(\xi_{k_n})_{n \in \NN}$ be a subsequence of $(\xi_n)_{n \in \NN}$ such that $\widetilde{D}_h^{\xi_{k_n}}$ converges in law as $n \to \infty$ (with respect to the local uniform topology on $\CC \times \CC$) to some random metric $D$. Using the Skorokhod representation theorem (by possibly passing into a subsequence of $(\xi_{k_n})_{n \in \NN}$), we can couple the metric $D$ with a collection of random fields $\{h^n , h\}_{n \in \NN}$ such that for all $n \in \NN$, both $h^n$ and $h$ have the law of a whole-plane GFF normalized so that its average on $\partial B_1(0)$ is equal to zero, and such that
\begin{equation*}
    (h^n , \widetilde{D}_{h^n}^{\xi_{k_n}}) \to (h,D) \quad \text{as} \quad n \to \infty,\,\,\text{a.s.},
\end{equation*}
where the convergence in the first coordinate is considered with respect to $H_{\text{loc}}^{-1}(\CC)$ and the convergence in the second coordinate is considered with respect to the local uniform topology of functions on $\CC \times \CC$.

Note that $\widetilde{D}_{h^n}^{\xi_{k_n}}$ also converges to $D$ as $n \to \infty$ a.s. with respect to the lower semicontinuous topology of functions induced by the convergence in Definition~\ref{def:lower_semi_cont}. Clearly, $D$ satisfies the triangle inequality a.s. In particular, the random metrics $(h^n, \widetilde{D}_{h^n}^{\xi_{k_n}})_{n \in \NN}, (h,D)$ are coupled as in \cite[Proposition~7.3]{kavvadias2025continuity}. Therefore, it follows from the proof of \cite[Theorem~8.19]{kavvadias2025continuity} that there exists a $\xi_c$-LQG metric $\psi \mapsto \widetilde{D}_{\psi}$ such that $D = \widetilde{D}_{h}$ a.s. This implies that $D$ is a.s. determined by $h$. Hence, combining with \cite[Lemma~10.2]{kavvadias2025continuity} (see also \cite[Lemma~4.5]{SS13}) and since $\widetilde{D}_{h^n}^{\xi_{k_n}}$ is a.s. determined by $h^n$ for all $n \in \NN$, we obtain that 
\begin{equation*}
    \widetilde{D}_h^{\xi_{k_n}} \to \widetilde{D}_h \quad \text{in probability} \quad \text{as} \quad n \to \infty
\end{equation*}
with respect to the local uniform topology on $\CC \times \CC$. This completes the proof of the proposition.
\end{proof}

Finally, we are ready to complete the proofs of Theorem~\ref{thm:main_result} and Corollary~\ref{cor:main_result}. Theorem~\ref{thm:main_result} will follow from combining the uniqueness of LQG metrics (Theorem~\ref{thm:uniqueness_lqg_metrics}) with Proposition~\ref{prop:conv_in_prob}. Corollary~\ref{cor:main_result} will follow from an argument which is similar to the argument given in the proof of \cite[Lemma~7.1]{dubedat2020liouville}.

\begin{proof}[Proof of Theorem~\ref{thm:main_result}.]
Suppose that we have the same setup as in Proposition~\ref{prop:conv_in_prob}. Let $(\xi_{k_n})_{n \in \NN}$ be any subsequence of $(\xi_n)_{n \in \NN}$. Then, Proposition~\ref{prop:conv_in_prob} implies that there exists a further subsequence $(\xi_{k'_n})_{n \in \NN}$ of $(\xi_{k_n})_{n \in \NN}$ such that
\begin{equation*}
    \widetilde{D}_h^{\xi_{k'_n}} \to \widetilde{D}_h \quad \text{in probability} \quad \text{as} \quad n \to \infty
\end{equation*}
for some $\xi_c$-LQG metric $\psi \mapsto \widetilde{D}_{\psi}$. By possibly passing into a subsequence, we can assume that
\begin{equation*}
    \widetilde{D}_h^{\xi_{k'_n}} \to \widetilde{D}_h \quad \text{as} \quad n \to \infty \quad \text{a.s.}
\end{equation*}
with respect to the local uniform topology of functions on $\CC \times \CC$.

Let $\widetilde{\beta}(\xi_{k_n}) , \widetilde{\beta}(\xi_c) \in (0,\infty)$ be defined by
\begin{align*}
    \widetilde{\beta}(\xi_{k_n}):=\inf\{l > 0 : \mathbb{P}[\widetilde{D}_h^{\xi_{k_n}}(0,1) \leq l] > 1/2 \},\,\, \widetilde{\beta}(\xi_c):=\inf\{l>0 : \mathbb{P}[\widetilde{D}_h(0,1) \leq l] > 1/2\},
\end{align*}
and note that Lemma~\ref{lem:normalization_well_defined} combined with Theorem~\ref{thm:uniqueness_lqg_metrics} imply that
\begin{equation*}
\mathbb{P}[\widetilde{D}_h^{\xi_{k_n}}(0,1) \leq \widetilde{\beta}(\xi_{k_n})] = \mathbb{P}[\widetilde{D}_h(0,1) \leq \widetilde{\beta}(\xi_c)] = \frac{1}{2}.  
\end{equation*}
Since $\widetilde{D}_h^{\xi_{k'_n}}(0,1) \to \widetilde{D}_h(0,1)$ as $n \to \infty$ a.s., we obtain by combining with Lemma~\ref{lem:normalization_well_defined} and Theorem~\ref{thm:uniqueness_lqg_metrics} that $\widetilde{\beta}(\xi_{k'_n}) \to \widetilde{\beta}(\xi_c)$ as $n \to \infty$. Thus, it follows that
\begin{equation*}
    \widetilde{\beta}(\xi_{k'_n})^{-1} \widetilde{D}_h^{\xi_{k'_n}} \to \widetilde{\beta}(\xi_c)^{-1} \widetilde{D}_h \quad \text{in probability} \quad \text{as} \quad n \to \infty
\end{equation*}
with respect to the local uniform topology of functions on $\CC \times \CC$.

Finally, we note that 
\begin{equation*}
\widehat{D}_h^{\xi_{k'_n}} = \widetilde{\beta}(\xi_{k'_n})^{-1} \widetilde{D}_h^{\xi_{k'_n}},\,\,\widehat{D}_h^{\xi_c} = \widetilde{\beta}(\xi_c)^{-1} \widetilde{D}_h \quad \text{for all} \quad n \in \NN \quad \text{a.s.}    
\end{equation*}
by the uniqueness of LQG metric modulo a multiplicative constant (see Theorem~\ref{thm:uniqueness_lqg_metrics}) combined with Lemma~\ref{lem:normalization_well_defined}. Therefore, it follows that
\begin{equation*}
\widehat{D}_h^{\xi_{k'_n}} \to \widehat{D}_h^{\xi_c} \quad \text{in probability} \quad \text{as} \quad n \to \infty.   
\end{equation*}
The proof of the theorem is then complete since the subsequence $(\xi_{k_n})_{n \in \NN}$ of $(\xi_n)_{n \in \NN}$ was arbitrary.    
\end{proof}

\begin{proof}[Proof of Corollary~\ref{cor:main_result}.]
\emph{Step 1. Outline of the proof.} By possibly passing into a subsequence of $(\xi_n)_{n \in \NN}$ and applying Theorem~\ref{thm:main_result}, we can assume that it is a.s. the case that $\widehat{D}_h^{\xi_n}$ converges to $\widehat{D}_h^{\xi_c}$ as $n \to \infty$ locally uniformly.

In Step 2, we will show that a.s., the sequence of metrics $(\widehat{D}_{h+f_n}^{\xi_n})_{n \in \NN}$ is locally equicontinuous for any such choice of $(f_n)_{n \in \NN}$ and $f$. Hence, by the Arz\'ela-Ascoli theorem, to complete the proof of the corollary (and since the sequence $(\xi_n)_{n 
\in \mathbb{N}}$ was arbitrary), it suffices to show that
\begin{equation*}
    \widehat{D}_{h+f_n}^{\xi_n}(z,w) \to \widehat{D}_{h+f}^{\xi_c}(z,w) \,\,\text{as} \,\, n \to \infty \,\, \text{for all} \, \, (z,w) \in \CC \times \CC.
\end{equation*}

In Step 3, we will show that 
\begin{equation}\label{eq:6}
\limsup_{n \to \infty} \widehat{D}_{h+f_n}^{\xi_n}(z,w) \leq \widehat{D}_{h+f}^{\xi_c}(z,w) \quad \text{for all} \quad (z,w) \in \mathbb{C} \times \mathbb{C},
\end{equation}
and in Step 4, we will show that
\begin{equation}\label{eq:10}
\liminf_{n \to \infty} \widehat{D}_{h+f_n}^{\xi_n}(z,w) \geq \widehat{D}_{h+f}^{\xi_c}(z,w) \quad \text{for all} \quad (z,w) \in \mathbb{C} \times \mathbb{C}.
\end{equation}

Hence, we will complete the proof  by combining \eqref{eq:6} with \eqref{eq:10}.

\emph{Step 2. The sequence of random metrics $(\widehat{D}_{h+f_n}^{\xi_n})_{n \in \NN}$ is locally equicontinuous.}

First, we note that Weyl scaling (Axiom~\eqref{it:weyl_scaling}) implies that
\begin{equation*}
\exp(-\xi_n ||f_n||_{\infty}) \widehat{D}_h^{\xi_n}(z,w) \leq \widehat{D}_{h+f_n}^{\xi_n}(z,w) \leq \exp(\xi_n ||f_n||_{\infty}) \widehat{D}_h^{\xi_n}(z,w)\,\,\text{for all}\,\,n \in \NN,\,\,z,w \in \CC.
\end{equation*}
Since $\xi_n \to \xi_c$ as $n \to \infty$, we obtain that there exists (random) constant $C>0$ such that
\begin{align}\label{eq:1}
C^{-1} \widehat{D}_{h}^{\xi_n}(z,w) \leq \widehat{D}_{h+f_n}^{\xi_n}(z,w) \leq C \widehat{D}_h^{\xi_n}(z,w),\,\, C^{-1} \widehat{D}_h^{\xi_c}(z,w) \leq \widehat{D}_{h+f}^{\xi_c}(z,w) \leq C \widehat{D}_h^{\xi_c}(z,w)   
\end{align}
for all $n \in \NN, z,w \in \CC$.

Hence, \eqref{eq:1} implies that for all $m,n \in \NN, (z,w),(z',w') \in B_m(0) \times B_m(0)$, we have that
\begin{align}\label{eq:2}
|\widehat{D}_{h+f_n}^{\xi_n}(z,w) - \widehat{D}_{h+f_n}^{\xi_n}(z',w')|&\leq |\widehat{D}_{h+f_n}^{\xi_n}(z,w) - \widehat{D}_{h+f_n}^{\xi_n}(z',w)| + |\widehat{D}_{h+f_n}^{\xi_n}(z',w) - \widehat{D}_{h+f_n}^{\xi_n}(z',w')|\nonumber\\
&\leq \widehat{D}_{h+f_n}^{\xi_n}(z,z') + \widehat{D}_{h+f_n}^{\xi_n}(w,w')\nonumber\\
&\leq C \left(\widehat{D}_h^{\xi_n}(z,z') + \widehat{D}_h^{\xi_n}(w,w')\right).    
\end{align}
Moreover, since $\widehat{D}_h^{\xi_n} \to \widehat{D}_h^{\xi_c}$ as $n \to \infty$ locally uniformly on $\CC \times \CC$, we obtain by combining ~\eqref{eq:2} with \cite[Theorem~1.7]{ding2024critical} that $(\widehat{D}_{h+f_n}^{\xi_n})_{n \in \NN}$ is equicontinuous.

\emph{Step 3. Proof of \eqref{eq:6}.}
Fix $z, w \in \CC$ distinct points. We note that combining \cite[Theorem~1.7]{DFGPS20}, \cite[Proposition~5.19]{ding2023tightness}, and \cite[Theorem~1.7, Proposition~1.8]{ding2024critical} gives that it is a.s. the case that the length metric spaces $(\CC,\widehat{D}_h^{\xi_n})$ and $(\CC,\widehat{D}_h^{\xi_c})$ are complete and locally compact for all $n \in \NN$. Thus, \eqref{eq:1} implies that the same is true for the length metrics spaces $(\CC,\widehat{D}_{h+f_n}^{\xi_n})$ and $(\CC,\widehat{D}_{h+f}^{\xi_c})$ for all $n \in \NN$. Therefore, \cite[Theorem~2.5.23]{burago2001course} implies that it is a.s. the case that both $(\CC,\widehat{D}_{h+f_n}^{\xi_n})$ and $(\CC,\widehat{D}_{h+f}^{\xi_c})$ are geodesic metric spaces for all $n \in \NN$.

Since $\widehat{D}_h^{\xi_c}(B_m(0) , \partial B_k(0)) \to \infty$ as $k \to \infty$ for all $m \in \NN$ by \cite[Proposition~5.19]{ding2023tightness} and 
\begin{equation*}
    \sup_{x,y \in B_m(0)} \widehat{D}_h^{\xi_n}(x,y) \to \sup_{x,y \in B_m(0)} \widehat{D}_h^{\xi_c}(x,y) \quad \text{as}\quad n \to \infty,
\end{equation*}
we obtain that there exists $m' \in \NN, m'>m$ such that
\begin{equation*}
    \widehat{D}_h^{\xi_n}(B_m(0), \partial B_{m'}(0)) > \sup_{x,y \in B_m(0)} \widehat{D}_h^{\xi_n}(x,y) \quad \text{for all} \quad n \in \NN. 
\end{equation*}
Therefore, combining with \eqref{eq:1}, we obtain that we can assume that $m,m'\in \NN$ are such that $m<m', z,w \in B_m(0)$, and such that the following holds for all $n \in \NN, x,y \in B_m(0)$. Every $\widehat{D}_h^{\xi_n}$-geodesic (resp.\ $\widehat{D}_{h+f_n}^{\xi_n}$-geodesic) from $x$ to $y$ is contained in $B_{m'}(0)$ and every $\widehat{D}_h^{\xi_c}$-geodesic (resp.\ $\widehat{D}_{h+f}^{\xi_c}$-geodesic) from $x$ to $y$ is contained in $B_{m'}(0)$.

Fix $\varepsilon>0$. Then, combining \eqref{eq:1} with \cite[Proposition~1.8]{ding2024critical} and the local uniform convergence as $n \to \infty$ of $\widehat{D}_h^{\xi_n}$ to $\widehat{D}_h^{\xi_c}$ gives that it is a.s. the case that there exists (random) $\delta>0$ such that the following holds for all $n \in \NN$ and all $x,y \in B_{m'}(0)$ with $|x-y| < \delta$. Any $\widehat{D}_h^{\xi_n}$-geodesic (resp.\ $\widehat{D}_h^{\xi_c}$-geodesic) from $x$ to $y$ and any $\widehat{D}_{h+f_n}^{\xi_n}$-geodesic (resp.\ $\widehat{D}_{h+f}^{\xi_c}$-geodesic) from $x$ to $y$ is contained in $B_{\varepsilon}(x)$.

Let $z=z_0,\cdots,z_N=w \in B_{m'}(0)$ be such that $\widehat{D}_{h+f}^{\xi_c} = \sum_{i=0}^{N-1} \widehat{D}_{h+f}^{\xi_c}(z_i,z_{i+1})$ and $\widehat{D}_{h+f}^{\xi_c}(z_i,z_{i+1}) < \delta$ for all $i=0,1,\cdots,N-1$ (note that the choice of the $z_i$'s is possible since $\widehat{D}_{h+f}^{\xi_c}$ is a geodesic metric and every $\widehat{D}_{h+f}^{\xi_c}$-geodesic from $z$ to $w$ is contained in $B_{m'}(0)$). Note that
\begin{equation*}
e^{-\text{osc}(f_n , K_{x,y}^n)\xi_n} e^{\xi_n f_n(x)} \widehat{D}_h^{\xi_n}(x,y) \leq \widehat{D}_{h+f_n}^{\xi_n}(x,y)\leq e^{\text{osc}(f_n,K_{x,y}^n) \xi_n} e^{\xi_n f_n(x)} \widehat{D}_h^{\xi_n}(x,y),
\end{equation*}
where
\begin{equation*}
\text{osc}(f_n , K):=\sup_{x,y \in K} |f_n(x) - f_n(y)|   
\end{equation*}
and $K_{x,y}^n$ denotes the union between a $\widehat{D}_h^{\xi_n}$-geodesic from $x$ to $y$ and a $\widehat{D}_{h+f_n}^{\xi_n}$-geodesic from $x$ to $y$.

Note that for $n \in \NN$ large enough and for every $i=0,\cdots,N-1$, we have that $\widehat{D}_h^{\xi_n}(z_i,z_{i+1}) < \delta$. Hence, by the triangle inequality, we have for $n$ large enough that
\begin{align}\label{eq:4}
\widehat{D}_{h+f_n}^{\xi_n}(z,w)&\leq \sum_{i=0}^{N-1} \widehat{D}_{h+f_n}^{\xi_n}(z_i,z_{i+1})\nonumber\\
&\leq \sum_{i=0}^{N-1} e^{\text{osc}(f_n,K_{z_i,z_{i+1}}^n)\xi_n} e^{\xi_n f_n(z_i)} \widehat{D}_h^{\xi_n}(z_i,z_{i+1})\nonumber\\
&\leq e^{\omega(f_n,\varepsilon)\xi_n} \sum_{i=0}^{N-1} e^{\xi_n f_n(z_i)} \widehat{D}_h^{\xi_n}(z_i,z_{i+1}),   
\end{align}
where
\begin{equation*}
    \omega(g,\varepsilon):=\sup\{|g(x)-g(y)| : x,y \in B_{2m'}(0),|x-y| < \varepsilon\}
\end{equation*}
for a continuous function $g$. Hence, taking $n \to \infty$ in \eqref{eq:4} gives that
\begin{align}\label{eq:5}
\limsup_{n \to \infty} \widehat{D}_{h+f_n}^{\xi_n}(z,w) &\leq e^{\xi_c \omega(f,\varepsilon)} \sum_{i=0}^{N-1} e^{\xi_c f(z_i)} \widehat{D}_h^{\xi_c}(z_i,z_{i+1})\nonumber\\
&\leq e^{\xi_c \omega(f,\varepsilon)} \sum_{i=0}^{N-1} e^{\text{osc}(f,K_{z_i,z_{i+1}})\xi_c} \widehat{D}_{h+f}^{\xi_c}(z_i,z_{i+1})\nonumber\\
&\leq e^{2\xi_c \omega(f,\varepsilon)} \sum_{i=0}^{N-1} \widehat{D}_{h+f}^{\xi_c}(z_i,z_{i+1})\nonumber\\
&=e^{2\xi_c \omega(f,\varepsilon)} \widehat{D}_{h+f}^{\xi_c}(z,w),    
\end{align}
where $K_{z_i,z_{i+1}}$ is defined in the exact same way as $K_{z_i,z_{i+1}}^n$ but with $\widehat{D}_h^{\xi_c}$ and $\widehat{D}_{h+f}^{\xi_c}$ in place of $\widehat{D}_h^{\xi_n}$ and $\widehat{D}_{h+f_n}^{\xi_n}$ respectively. By letting $\varepsilon \to 0$ in \eqref{eq:5} and using the local uniform continuity of $f$, we obtain \eqref{eq:6}.

\emph{Step 4. Proof of \eqref{eq:10}.}
Next, we let $z=z_0^n,\cdots,z_{N_n}^n = w \in B_{m'}(0)$ be such that
\begin{align}\label{eqn:6}
&\widehat{D}_{h+f_n}^{\xi_n}(z,w) = \sum_{i=0}^{N_n-1} \widehat{D}_{h+f_n}^{\xi_n}(z_i^n,z_{i+1}^n),\nonumber\\
&\widehat{D}_h^{\xi_n}(z_i^n,z_{i+1}^n) <\delta,\quad \text{for all} \quad n \in \NN, i=0,\cdots,N_n-1.   
\end{align}

Taking the minimal number $N_n$ for which \eqref{eqn:6} holds, we can assume by possibly taking a subsequence that $N_n$ converges as $n \to \infty$ to some number $N \in \NN$. Moreover, by possibly passing into a further subsequence, we can assume that
\begin{equation*}
    \lim_{n \to \infty} \widehat{D}_{h+f_n}^{\xi_n}(z,w) = \liminf_{n \to \infty} \widehat{D}_{h+f_n}^{\xi_n}(z,w)
\end{equation*}
and the $z_i^n$'s converge to some $z_i$'s for $0 \leq i \leq N$ and $\widehat{D}_h^{\xi_c}(z_i,z_{i+1}) \leq\delta$ for all $0 \leq i \leq N$. Then, for $n$ large enough, we have that
\begin{align}\label{eq:7}
\widehat{D}_{h+f_n}^{\xi_n}(z,w)&\geq \sum_{i=0}^{N-1} e^{-\xi_n \text{osc}(f_n , K_{z_i^n,z_{i+1}^n}^n)} e^{\xi_n f_n(z_i^n)} \widehat{D}_h^{\xi_n}(z_i^n,z_{i+1}^n)\nonumber\\
&\geq e^{-\xi_n \omega(f_n,\varepsilon)} \sum_{i=0}^{N-1} e^{\xi_n f_n(z_i^n)} \widehat{D}_h^{\xi_n}(z_i^n,z_{i+1}^n).   
\end{align}

Using the local uniform convergence of $\widehat{D}_h^{\xi_n}$ to $\widehat{D}_h^{\xi_c}$, we obtain that as $n \to \infty$,
\begin{align}\label{eq:8}
&\left |\sum_{i=0}^{N-1} e^{\xi_n f_n(z_i^n)} \widehat{D}_h^{\xi_n}(z_i^n,z_{i+1}^n) - \sum_{i=0}^{N-1} e^{\xi_n f_n(z_i^n)} \widehat{D}_h^{\xi_c}(z_i^n,z_{i+1}^n)\right |\nonumber\\
&\leq N e^{\xi_n ||f_n||_{\infty}} ||\widehat{D}_h^{\xi_c}|_{B_{2m'}(0) \times B_{2m'}(0)} - \widehat{D}_h^{\xi_n}|_{B_{2m'}(0) \times B_{2m'}(0)}||_{\infty} \to 0.   
\end{align}

Combining \eqref{eq:7} with \eqref{eq:8} gives that
\begin{align}\label{eq:9}
\liminf_{n \to \infty} \widehat{D}_{h+f_n}^{\xi_n}(z,w) &\geq e^{-\xi_c \omega(f,\varepsilon)} \sum_{i=0}^{N-1} e^{\xi_c f(z_i)} \widehat{D}_h^{\xi_c}(z_i,z_{i+1})\nonumber\\
&\geq e^{-\xi_c \omega(f,\varepsilon)} \sum_{i=0}^{N-1} e^{-\xi_c \omega(f,K_{z_i,z_{i+1}})} \widehat{D}_{h+f}^{\xi_c}(z_i,z_{i+1})\nonumber\\
&\geq e^{-2\xi_c \omega(f,\varepsilon)} \sum_{i=0}^{N-1} \widehat{D}_{h+f}^{\xi_c}(z_i,z_{i+1})\nonumber\\
&\geq e^{-2\xi_c \omega(f,\varepsilon)} \widehat{D}_{h+f}^{\xi_c}(z,w).   
\end{align}

Letting $\varepsilon \to 0$ in \eqref{eq:9} gives \eqref{eq:10}.
Therefore, the proof of the corollary is complete.  
\end{proof}

\printbibliography
\end{document}